\numberwithin{equation}{section}
\newtheorem{theorem}{Theorem}[section]
\newtheorem{Definition}{Definition}[section]
\newtheorem{Remark}{Remark}[section]
\newtheorem{proposition}{Proposition}[section]
\newtheorem{Lemma}{Lemma}[section]
\newcommand{\R}{\mathbb R}
\newcommand{\C}{\mathbb C}
\newcommand{\Z}{\mathbb Z}
\newcommand{\N}{\mathbb N} 
\begin{document}
\baselineskip16pt
\title[Hartree-Fock equations in $M^{p,q}-$spaces]{The Hartree-Fock equations in  modulation  spaces}
\author{Divyang G. Bhimani}
\address{TIFR  Centre for Applicable Mathematics\\
Bangalore\\
India 560065}
\email{divyang@tifrbng.res.in}

\author{Manoussos Grillakis}
\address{Department of Mathematics\\
University of Maryland\\
College Park\\
MD 20742}

\email{mggrlk@math.umd.edu}

\author{Kasso A. Okoudjou}
\address{Department of Mathematics and Norbert Wiener Center\\
University of Maryland\\
College Park\\
MD 20742}
\email{kasso@math.umd.edu}

\subjclass[2010]{35Q40, 35Q55, 42B35 (primary), 35A01 (secondary)}
\keywords{reduced Hartree-Fock and Hartree-Fock equations,  harmonic potential,  modulation spaces,  local and global well-posedness}
\date{\today}

\maketitle
\begin{abstract}  
 We establish both a local and a global well-posedness theories for  the nonlinear Hartree-Fock   equations and its reduced analog in the setting of the modulation spaces on $\R^d$. In addition, we prove similar results when  a harmonic potential is added to the equations. In the process, we prove the boundedeness of certain multilinear operators on products of the modulation spaces which may be of independent interest. 
\end{abstract}

\tableofcontents

\section{Introduction and Description of the Problem}
\subsection{ Motivation}
The Hartree equation, introduced by Hartree in the 1920s, arises as the mean-field limit of large systems of identical bosons, e.g., the Gross-Pitaevskii equation for Bose-Einstein condensates \cite{gross1961structure, pitaevskii1961vortex}, when taking into account the self-interactions of the bosons. A semirelativistic version of the Hartree equation was considered in   \cite{elgart2007mean, lenzmann2007well} for modeling boson stars. The Hartree-Fock equation, also developed by Fock \cite{fock1930naherungsmethode}  describes large systems of identical fermions by taking into account the self-interactions of charged fermions as well as an exchange term resulting from Pauli's principle. A semirelativistic version of the Hartree-Fock equation was developed in \cite{frohlich2007dynamical}  for modeling white dwarfs. The Hartree equation is also used for fermions as an approximation of the Hartree-Fock equation neglecting the impact of their fermionic nature. Hartree and Hartree-Fock equations are used for several applications in many-particle physics \cite[Section 2.2]{lipparini2008modern}.

In   \cite{cabre2014nonlinear, laskin2002fractional}   fractional Laplacians have been applied to model physical phenomena.  It was formulated by Laskin \cite{laskin2002fractional} as a result of extending the Feynman path integral from the Brownian-like to L\'evy-like quantum mechanical paths.  The harmonic oscillator (Hermite operator) $-\Delta+|x|^2$ is a fundamental operator in quantum physics and in  analysis \cite{thangavelu1993lectures}. Hartree-Fock equations with harmonic potential model Bose-Einstein condensates with attractive inter-particle  interactions under a magnetic trap . The isotropic harmonic potential $|x|^2$ describes a magnetic field whose role is to confine the movement of particles.  A class of nonlinear Schr\"odinger equations  with a ``nonlocal'' nonlinearity that we call ``Hartree type"  also occurs in the modeling of quantum semiconductor devices (see \cite{carles2003nonlinear} and the references therein).

 \subsection{Hartree-Fock equations}   Before giving the exact form of the Hatree-Fock equations, we set some notations that will be used through the paper. For two functions $\phi$ and $h$ defined on    $\R$ and $\R^d$ respectively, we set 
 $$\phi (h (D))f=  \mathcal{F}^{-1}e^{it \phi \circ h (\cdot)} \mathcal{F}f$$ where $\mathcal{F}$ denotes the  Fourier
 transform. 

 The Hartree-Fock equation of $N$ particles is given by
\begin{equation}\label{HF}
\begin{cases} i\partial_t \psi_k = \phi (h(D))  \psi_k-\sum_{l=1}^{N} \left(\frac{\kappa}{|x|^{\gamma}} \ast |\psi_l|^2 \right)\psi_{k} + \sum_{l=1}^{N} \psi_{l} \left(\frac{\kappa}{|x|^{\gamma}} \ast \{\overline{\psi_l} \psi_k\} \right),\\
\psi_{k|t=0}=\psi_{0k},
\end{cases}
\end{equation}
where   $t\in \R, \psi_k:\R^d\times \R \to \C,$ $k=1,2,..., N,   0<\gamma<d,$  $ \kappa$ is constant,  and $\ast$ denotes the convolution in $\mathbb R^d.$

 The Hartree factor 
\begin{eqnarray*}
H= \sum_{l=1}^{N} \left(\frac{\kappa}{|x|^{\gamma}} \ast |\psi_l|^2 \right)
\end{eqnarray*}
describes the self-interaction between charged particles as a repulsive force if  $\kappa>0$, and an attractive force   if $\kappa<0.$  The last term on the right side of  \eqref{HF} is the so-called ``exchange term (Fock term)"
\[F(\psi_k)=\sum_{l=1}^{N} \psi_{l} \left(\frac{\kappa}{|x|^{\gamma}} \ast \{\overline{\psi_l} \psi_k\} \right)\]
 which is a consequence of the Pauli principle and thus applies to fermions.  In the mean-field limit $(N\to \infty)$, this term is negligible compared to the Hatree factor.
 In this case,~\eqref{HF} is replaced by the  $N$ coupled equations, the so-called  \textbf{reduced Hartree-Fock equations}:
\begin{equation}\label{RHF}
\begin{cases} i\partial_t \psi_k = \phi (h(D))  \psi_k-\sum_{l=1}^{N} \left(\frac{\kappa}{|x|^{\gamma}} \ast |\psi_l|^2 \right)\psi_{k},\\
\psi_{k|t=0}=\psi_{0k}.
\end{cases}
\end{equation} 
The rigorous time-dependent Hartree-Fock theory has been developed first by Chadam-Glassey \cite{chadam1975global}  for  \eqref{RHF}  with  $\phi(h(D))=-\Delta$ in dimension $d=3.$ In this setting, \eqref{RHF}  is  equivalent   to the von Neumann equation 
\begin{eqnarray}\label{vn}
iK' (t)=\left[G(t), K(t) \right]
\end{eqnarray}
for $K(t)= \sum_{1}^{N} \left| \psi_k(t) \rangle  \langle \psi_k(t) \right|$ and $G(t)=\phi(h(D))+H(x,t),$  see, e.g.,  \cite{komech2014hartree, lewin2014hartree, lewin2015hartree}.
In the above equation, we use Dirac's notation $|u \rangle \langle v|$ for the operator $f\mapsto \langle v,f\rangle u$. The von Neumann  equation~\eqref{vn}  can also  be considered for more general class of density matrices  $K(t)$. For example, one can consider the class of nonnegative self-adjoint trace class operators, for which $K(t)$ satisfies the following conditions:
\begin{eqnarray*}
K^*(t)=K(t), K(t)\leq 1,  \  \text{tr} K =N
\end{eqnarray*}
where the condition  $K(t)\leq 1$  corresponds to the Pauli exclusion principle, and  $N$ is the ``number of particles". 

The well-posedness for  \eqref{vn} was proved by Bove-Da Parto-Fano \cite{bove1974existence, bove1976hartree} for a short-range paire-wise interaction potential $w(x-y)$ instead of  Coulomb  potential   $\frac{1}{|x-y|} $ in  the Hartree factor. The case of Coulomb potential was  resolved by Chadam \cite{chadam1976time}.  Lewin-Sabin \cite{lewin2015hartree} have established the well-posedness for   \eqref{vn} with density matrices of infinite trace for pair-wise interaction potentials  $w\in L^1(\mathbb R^3)$.  However, their investigation did not include the Coulomb potential case.  Moreover, Lewin-Sabin \cite{lewin2014hartree} prove the asymptotic stability for the ground state in dimension $d=2$. 
Recently,  Fr\"ohlich-Lenzmann  \cite{frohlich2007dynamical} and  Carles-Lucha-Moulay \cite{carles2015higher}  studied the local and global well-posedness  for  \eqref{HF} and  \eqref{RHF} in $L^2-$based Sobolev spaces, when $d=3$. The  existence of a global solution to \eqref{HF}  was established in \cite[Theorem 2.2]{frohlich2007dynamical} assuming  sufficiently small initial data.   These results naturally  raise two questions.  First,  could similar results be in other functions spaces?  Second,  is it possible to  obtain  the   existence of global solutions to   \eqref{HF} and \eqref{RHF}  with  any  initial data.

We investigate these two questions in the setting of the modulation spaces $M^{p,q}(\mathbb R^d)$ (to be defined below),  which have recently been considered as spaces of  Cauchy data for certain   nonlinear dispersive equations, see \cite{baoxiang2006isometri, benyi2007unimodular, benyi2009local, Bhimani2018global, bhimani2016cauchy, ruzhansky2016global,  wang2009global, wang2007global, wang2011harmonic}.  Generally  modulation spaces are  considered as low regularity spaces  because  they contain  rougher functions  than functions in any given  fractional Bessel potential space  (see  Proposition  \ref{msk} below). We refer to excellent survey \cite{ruzhansky2012modulation}  and the reference therein for details.

 Taking these considerations into account, we  initiate the study of  \eqref{HF}
and \eqref{RHF}  in modulation spaces. In particular, the two our main results can be stated as follows. 

\begin{theorem}[Local well-posedness]\label{gidi}  Let $N, d\in \N $, and $\gamma \in (0, d)$ be given. Let $X$ be given by 
$$
X=\begin{cases}
M^{p,q}(\mathbb R^d)\quad  \textrm{if}\quad  1\leq p \leq 2, 1\leq q \leq \frac{2d}{d+\gamma}\\
M_s^{p,1}(\mathbb R^d)\quad  \textrm{if}\quad  1<p< \infty, \frac{1}{p}+\frac{\gamma}{d}-1= \frac{1}{p+\epsilon}
\end{cases}
$$ for some $\epsilon>0$  and $s\geq 0$.  Let  $\phi: \R^d\to \R$ be such  that  there exist $m_1, m_2>0$ with 
$$\begin{cases}
 \left| \phi^{(\mu)} (r)\right| \lesssim r^{m_1-\mu} \quad \textrm{if}\quad  r\geq 1\\
  \left| \phi^{(\mu)} (r)\right| \lesssim r^{m_2-\mu}  \quad  \textrm{if}\quad  0<r<1\end{cases}$$ 
for all  $\mu \in \mathbb N_0.$  Furthermore, assume that $h\in C^{\infty}(\mathbb R^d \setminus \{ 0\})$ is either \newline
\noindent (a) a positive function of homogeneous type  of degree $\lambda>0$ with $m_1\lambda \leq 2$, or  \newline
\noindent  (b) $\phi\circ h (\xi):= P(\xi)=\sum_{|\beta|\leq m} c_{\beta} \xi^{\beta}$ is a polynomial with order $m\leq 2.$

Given initial data  $\left(\psi_{01},..., \psi_{0N} \right) \in X^N$, the following statements hold. 
 
  \begin{enumerate}
  \item[(i)] \label{gidi1}There exists $T>0$ depending only on $\|\psi_{01}\|_{X},...,\|\psi_{0N}\|_{X},$ $d$ and $\gamma$ such that \eqref{HF} has a unique local solution
$$(\psi_1,..., \psi_{N})\in \left( C([0, T], X) \right)^N.$$ 
\item[(ii)]  \label{gidi2} There exists $T>0$ depending only on $\|\psi_{01}\|_{X},...,\|\psi_{0N}\|_{X},$ $d$ and $\gamma$ such that \eqref{RHF} has a unique local solution
$$(\psi_1,..., \psi_{N})\in \left( C([0, T], X) \right)^N.$$
\end{enumerate}   
 \end{theorem}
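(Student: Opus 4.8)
The plan is to recast \eqref{HF} and \eqref{RHF} as fixed point problems for their Duhamel (integral) formulations and to solve them by the contraction mapping principle in a closed ball of $\left(C([0,T],X)\right)^N$. Writing $\mathcal U(t)=e^{-it\phi(h(D))}$ for the propagator of the linear part $i\partial_t=\phi(h(D))$, and collecting the cubic terms of the $k$-th equation of \eqref{HF} into
\begin{equation*}
\mathcal N_k(\vec\psi)=-\sum_{l=1}^{N}\left(\frac{\kappa}{|x|^{\gamma}}\ast|\psi_l|^2\right)\psi_k+\sum_{l=1}^{N}\psi_l\left(\frac{\kappa}{|x|^{\gamma}}\ast\{\overline{\psi_l}\psi_k\}\right),
\end{equation*}
a solution is a fixed point of $\vec\psi\mapsto\Phi(\vec\psi)$ whose components are
\begin{equation*}
\Phi_k(\vec\psi)(t)=\mathcal U(t)\psi_{0k}-i\int_0^t\mathcal U(t-\tau)\,\mathcal N_k(\vec\psi)(\tau)\,d\tau,\qquad k=1,\dots,N.
\end{equation*}
For \eqref{RHF} one simply drops the Fock (second) sum in $\mathcal N_k$; since that term has the same trilinear shape as the Hartree term, every estimate below applies to both systems, so it suffices to treat \eqref{HF}, and (ii) follows a fortiori.

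Two ingredients drive the argument. First, the propagator must act boundedly on $X$ with a constant locally bounded in time: under hypotheses (a) or (b) on $\phi$ and $h$, the unimodular multiplier $e^{-it\phi\circ h(\cdot)}$ satisfies
\begin{equation*}
\|\mathcal U(t)f\|_X\le C(t)\|f\|_X,\qquad \sup_{|t|\le 1}C(t)<\infty,
\end{equation*}
which is exactly the content of the modulation-space multiplier estimates recorded earlier in the paper. Second, and this is the heart of the matter, one needs the trilinear bound
\begin{equation*}
\left\|\left(\frac{1}{|x|^{\gamma}}\ast(\overline f\,g)\right)h\right\|_X\lesssim\|f\|_X\,\|g\|_X\,\|h\|_X,
\end{equation*}
valid for the two choices of $X$ in the statement. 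I would prove this by factoring the operator as a composition of the Riesz potential $I_{d-\gamma}:u\mapsto|x|^{-\gamma}\ast u$, whose Fourier symbol is a constant multiple of $|\xi|^{\gamma-d}$, with the pointwise products $\overline f\,g$ and $(\,\cdot\,)h$. Tracking the symbol $|\xi|^{\gamma-d}$ through the $\ell^q$-summation over unit frequency boxes that defines the $M^{p,q}$-norm is what forces the constraint $q\le\frac{2d}{d+\gamma}$ in the first case and the relation $\frac1p+\frac\gamma d-1=\frac1{p+\epsilon}$ in the $M^{p,1}_s$ case; in the latter case one also uses that $M^{p,1}$ is a Banach algebra under pointwise multiplication. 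These are precisely the multilinear boundedness results advertised in the abstract.

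Granting these two estimates, the contraction proceeds routinely. Set $R=\max_k\|\psi_{0k}\|_X$ and work in the ball $B=\{\vec\psi\in(C([0,T],X))^N:\sup_{0\le t\le T}\|\psi_k(t)\|_X\le 2CR\text{ for all }k\}$, a complete metric space under the sup-in-time norm. Combining the propagator bound with the trilinear estimate gives, for $\vec\psi\in B$,
\begin{equation*}
\sup_{0\le t\le T}\|\Phi_k(\vec\psi)(t)\|_X\le C\,R+C\,T\,(2CR)^3,
\end{equation*}
so $\Phi$ maps $B$ into itself once $T$ is small depending only on $R$, $d$, and $\gamma$. For the contraction one writes $\mathcal N_k(\vec\psi)-\mathcal N_k(\vec\varphi)$ as a telescoping sum in which each summand is linear in a difference $\psi_j-\varphi_j$ and multilinear in the remaining factors; the same trilinear estimate then yields
\begin{equation*}
\sup_{0\le t\le T}\|\Phi(\vec\psi)(t)-\Phi(\vec\varphi)(t)\|_{X^N}\le C\,T\,(2CR)^2\,\sup_{0\le t\le T}\|\vec\psi(t)-\vec\varphi(t)\|_{X^N},
\end{equation*}
which is a strict contraction after further shrinking $T$ (still depending only on $R,d,\gamma$). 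The Banach fixed point theorem then furnishes a unique fixed point in $B$, i.e.\ a unique local solution in $(C([0,T],X))^N$, proving (i); dropping the Fock term gives (ii). The main obstacle is the trilinear estimate above: unlike the Bessel-potential setting, the $\ell^q$ frequency summation in the modulation norm interacts delicately with the homogeneous singular kernel $|x|^{-\gamma}$, and it is the sharp bookkeeping of this interaction that pins down the admissible ranges of $(p,q,\gamma)$.
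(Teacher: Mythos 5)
Your proposal is correct and follows essentially the same route as the paper's proof of Theorem \ref{gidi}: the Duhamel reformulation, the unimodular multiplier bounds of Propositions \ref{des} and \ref{drw} giving $\|U(t)f\|_{X}\lesssim C_T\|f\|_{X}$, the trilinear Hartree estimates of Propositions \ref{t1} and \ref{fip} applied identically to the Hartree and Fock terms, and a Banach fixed-point argument in a ball of $\left(C([0,T],X)\right)^N$ with $T$ chosen small in terms of the data norms. The only divergence is in your sketch of the trilinear lemma (tracking the symbol $|\xi|^{\gamma-d}$ through the frequency-box summation), whereas the paper proves it via the $\mathcal{F}L^{1}$-module property of $M^{p,q}$ combined with the Hardy--Littlewood--Sobolev inequality on the Fourier side (and, in the weighted case, on the STFT); since you invoke that estimate as a standing lemma exactly as the paper does, this does not change the structure of the argument.
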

 Our second main result deal with the global well-posedness of these equations. In the statement, we denote by $X_{rad}$,  space of radial functions in the Banach space $X$. 

\begin{theorem}[Global well-posedness]\label{dgt}  Suppose that $\phi$ and $h$ are defined on $\R$ and $\R^d$  respectively such that $\phi\circ h (\xi)= |\xi|^{\alpha}$ for $\xi \in \mathbb R^d$, and where $\alpha>0.$  Assume that  $0 < \gamma < \min\{\alpha, d/2\}$, and that one of the following two statements holds:
\begin{itemize}
\item[(a)]  For $\alpha =2$ and $d\in \mathbb N,$ let 
$$X=\begin{cases}
M^{p,q}(\mathbb R^d)\quad  \textrm{if}\quad  1\leq p \leq 2, 1\leq q \leq \frac{2d}{d+\gamma}\\
M_s^{p,1}(\mathbb R^d)\quad  \textrm{if}\quad  1<p< \infty, \frac{1}{p}+\frac{\gamma}{d}-1= \frac{1}{p+\epsilon}
\end{cases}
$$ for some $\epsilon>0$  and $s\geq 0$.
\item[(b)]  For   $\alpha \in  \left(\frac{2d}{2d-1}, 2\right)$ and $d\geq 2,$  let
$$X=\begin{cases}
M_{rad}^{p,q}(\mathbb R^d)\quad \textrm{if} \quad 1\leq p \leq 2, 1\leq q \leq \frac{2d}{d+\gamma}\\
M_{s}^{p,1}(\mathbb R^d)\cap L_{rad}^2(\mathbb R^d)\quad \textrm{if}\quad 2<p< \infty, \frac{1}{p}+\frac{\gamma}{d}-1= \frac{1}{p+\epsilon}\end{cases}
$$  for some $\epsilon>0$ and $s\geq 0$.
\end{itemize}
Given initial data   $(\psi_{01},...,\psi_{0N})\in X^N,$ the following statements hold. 
\begin{enumerate}
\item[(i)]\label{dgt1}  There exists a  unique global solution of \eqref{HF} such that
$$(\psi_1,..., \psi_{N})\in  \left(C(\mathbb R, X)\cap L^{4\alpha/\gamma}_{loc}(\mathbb R, L^{4d/(2d-\gamma)} (\mathbb R^d)) \right)^N.$$
\item[(ii)]  \label{dg2}  There exists a  unique global solution of \eqref{RHF} such that
$$(\psi_1,..., \psi_{N})\in  \left(C(\mathbb R, X)\cap L^{4\alpha/\gamma}_{loc}(\mathbb R, L^{4d/(2d-\gamma)} (\mathbb R^d)) \right)^N.$$
\end{enumerate}
\end{theorem}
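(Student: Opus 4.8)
The plan is to combine the local theory of Theorem~\ref{gidi} with two global ingredients: conservation of the $L^2$-mass together with Strichartz estimates for the propagator $e^{it|D|^\alpha}$, and a \emph{persistence-of-regularity} argument that upgrades an $L^2$-solution to a solution in $C(\mathbb{R},X)$. The starting observation is that, for the stated index ranges, every datum already lies in $L^2$: in case~(a) and in the first line of case~(b) one has $1\le p\le 2$ and $1\le q\le \frac{2d}{d+\gamma}<2$, so that $M^{p,q}(\mathbb{R}^d)\hookrightarrow M^{2,2}(\mathbb{R}^d)=L^2(\mathbb{R}^d)$, while in the second line of case~(b) the space $X$ is explicitly intersected with $L^2_{rad}(\mathbb{R}^d)$. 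Thus each datum in $X^N$ is in particular admissible for the $L^2$-theory.

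First I would construct a global $L^2$-solution. Setting $r=\frac{4d}{2d-\gamma}$ and $\rho=\frac{4\alpha}{\gamma}$, one checks the scaling identity $\frac{\alpha}{\rho}+\frac{d}{r}=\frac{d}{2}$, so that $(\rho,r)$ is the $L^2$-admissible Strichartz pair for $e^{it|D|^\alpha}$. The Hartree and Fock nonlinearities are controlled by Hardy--Littlewood--Sobolev: since $0<\gamma<d/2$, one has $\big\|\,|x|^{-\gamma}\ast|\psi_l|^2\big\|_{L^{2d/\gamma}}\lesssim \|\psi_l\|_{L^{r}}^2$, and the resulting trilinear term is placed in the dual Strichartz space. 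Because $0<\gamma<\alpha$ the problem is $L^2$-subcritical, so a contraction in $L^{\rho}_tL^r_x$ yields a local $L^2$-solution on an interval whose length depends only on $\sum_k\|\psi_{0k}\|_{L^2}$. The total $L^2$-mass $\mathrm{tr}\,K=\sum_k\|\psi_k\|_{L^2}^2$ is conserved along the flow, and iterating the local construction gives a global solution with $\psi_k\in C(\mathbb{R},L^2)\cap L^{\rho}_{loc}(\mathbb{R},L^r)$. In case~(a) ($\alpha=2$) the required Strichartz estimates are the classical ones; in case~(b) ($\alpha\in(\tfrac{2d}{2d-1},2)$, $d\ge 2$) they are the radial Strichartz estimates for the fractional propagator, whose range of validity is exactly the source of the lower bound $\alpha>\frac{2d}{2d-1}$. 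Since $|D|^\alpha$ and the Hartree/Fock nonlinearities commute with rotations, radial data produce radial solutions, so the radial estimates may legitimately be applied throughout.

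Next I would propagate the $M^{p,q}$-regularity. The propagator $e^{it|D|^\alpha}$ is bounded on $M^{p,q}(\mathbb{R}^d)$ with at most polynomial growth in $t$ (unimodular Fourier multipliers act boundedly on modulation spaces), so from the Duhamel formula
\[ \psi_k(t)=e^{it|D|^\alpha}\psi_{0k}+i\int_0^t e^{i(t-s)|D|^\alpha}\mathcal{N}_k(\psi)(s)\,ds \]
one obtains $\|\psi_k(t)\|_{X}\lesssim (1+|t|)^{c}\|\psi_{0k}\|_{X}+\int_0^t(1+|t-s|)^{c}\,\|\mathcal{N}_k(\psi)(s)\|_{X}\,ds$. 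The crucial point is a multilinear modulation-space estimate of the form $\big\|\big(|x|^{-\gamma}\ast(\bar f g)\big)h\big\|_{X}\lesssim \|f\|_{L^{r}}\|g\|_{L^{r}}\|h\|_{X}$, which isolates one factor in the strong norm $X$ while measuring the other two in the Strichartz norm $L^{r}$. Inserting this bound and using H\"older in time (note $\|\psi_l\|_{L^r}^2\in L^{\rho/2}_{loc}=L^{2\alpha/\gamma}_{loc}$, hence integrable on compact intervals), a Gronwall argument closes and yields, for every $T>0$,
\[ \sup_{|t|\le T}\|\psi_k(t)\|_{X}\lesssim_T \|\psi_{0k}\|_{X}\exp\Big(C\sum_l\int_{-T}^{T}\|\psi_l(s)\|_{L^r}^2\,ds\Big)<\infty. \]
This a priori bound prevents blow-up of the $X$-norm in finite time; combined with the local theory of Theorem~\ref{gidi} and $L^2$-uniqueness, it promotes the global $L^2$-solution to a solution in $\big(C(\mathbb{R},X)\cap L^{\rho}_{loc}(\mathbb{R},L^r)\big)^N$, proving (i) and (ii).

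The main obstacle is the persistence estimate: one must prove the multilinear inequality $\big\|\big(|x|^{-\gamma}\ast(\bar f g)\big)h\big\|_{X}\lesssim \|f\|_{L^{r}}\|g\|_{L^{r}}\|h\|_{X}$ with the two Lebesgue factors at \emph{exactly} the Strichartz exponent $r=\frac{4d}{2d-\gamma}$, so that the time-integrability furnished by the Strichartz norm suffices to run Gronwall; this is precisely where the multilinear boundedness results on products of modulation spaces, together with the exact matching of the HLS exponents to $r$, are indispensable. The secondary difficulty is confined to case~(b): establishing the global $L^2$-solution for $\alpha<2$ rests on radial Strichartz estimates valid only for $\alpha>\frac{2d}{2d-1}$, and one must verify carefully that radial symmetry is preserved by the full Hartree--Fock flow, including the exchange term, so that these estimates genuinely apply to the nonlinear solution.
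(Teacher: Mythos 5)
Your overall architecture coincides with the paper's: build the global $L^2$ theory by contraction in Strichartz spaces (Keel--Tao for $\alpha=2$, the Guo--Wang radial estimates for $\frac{2d}{2d-1}<\alpha<2$, with radial symmetry preserved by the flow), prove a blow-up alternative for the $X$-norm, and close via a Duhamel--Gronwall a priori bound; this is exactly Propositions \ref{miF}--\ref{miF2} and Lemmas \ref{sb}--\ref{ms}. However, the inequality you place at the heart of the persistence step, $\bigl\|\bigl(|x|^{-\gamma}\ast(\bar f g)\bigr)h\bigr\|_{X}\lesssim \|f\|_{L^{r}}\|g\|_{L^{r}}\|h\|_{X}$ with $r=\frac{4d}{2d-\gamma}$, is false, and so the proposal has a genuine gap precisely at the point you yourself identify as the main obstacle. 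With only $u:=\bar f g\in L^{r/2}=L^{2d/(2d-\gamma)}$, Hardy--Littlewood--Sobolev gives $|x|^{-\gamma}\ast u\in L^{2d/\gamma}$ and nothing better; this is an unbounded function, and multiplication by it cannot map $M^{p,q}$ to itself with a uniform bound. Concretely, since $\int_{|y|\le 1}|y|^{-\gamma\cdot 2d/\gamma}\,dy=\int_{|y|\le 1}|y|^{-2d}\,dy=\infty$, i.e. $|y|^{-\gamma}\notin L^{(r/2)'}_{loc}$, one can choose $0\le u\in L^{r/2}$ compactly supported with $(|x|^{-\gamma}\ast u)(0)=\int|y|^{-\gamma}u(y)\,dy=\infty$; factoring $u=\bar f g$ with $\|f\|_{L^r}^2=\|g\|_{L^r}^2=\|u\|_{L^{r/2}}$ and taking $h$ a fixed bump at the origin, truncations $u_n$ of $u$ make $\|(|x|^{-\gamma}\ast u_n)h\|_{M^{p,q}}$ blow up while the right-hand side stays fixed. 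The same failure is visible on the Fourier side: $\widehat{K}(\xi)\asymp|\xi|^{\gamma-d}$, Hausdorff--Young yields only $\widehat{u}\in L^{2d/\gamma}$, and on $\{|\xi|>1\}$ H\"older would require $|\xi|^{\gamma-d}\in L^{2d/(2d-\gamma)}(|\xi|>1)$, i.e. $(d-\gamma)\frac{2d}{2d-\gamma}>d$, which reduces to $-2\gamma>-\gamma$. So the Strichartz exponent $r$ is strictly insufficient for the high frequencies of the Hartree potential, and no ``exact matching of HLS exponents to $r$'' can rescue it.

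The missing idea is the paper's low/high frequency splitting in Lemma \ref{ms}: write $\widehat{K}=k_1+k_2$ with $k_1=\chi_{\{|\xi|\le 1\}}\widehat{K}\in L^1$ and $k_2=\chi_{\{|\xi|>1\}}\widehat{K}\in L^{q}$ for $\frac{d}{d-\gamma}<q\le 2$ --- a choice possible if and only if $\gamma<d/2$, which is where that hypothesis is consumed. The low-frequency term is paired with the \emph{conserved mass}, giving $\|k_1\|_{L^1}\|\psi_l\|_{L^2}^2$, while the high-frequency term is estimated via Hausdorff--Young by $\|k_2\|_{L^q}\|\psi_l\|_{L^{2q}}^2$, where $2q>\frac{2d}{d-\gamma}>r$: the Lebesgue exponent genuinely needed is \emph{strictly larger} than your $r$. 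One then picks $\beta$ so that $(2\beta,2q)$ is $\alpha$-fractional admissible, which is compatible with $q>\frac{d}{d-\gamma}$ if and only if $\gamma<\alpha$ (consuming the other hypothesis); this is exactly why Propositions \ref{miF} and \ref{miF2} record that the global $L^2$ solution lies in $L^{p}_{loc}L^{q}$ for \emph{every} admissible pair, not merely for $(4\alpha/\gamma,r)$. With $\|\psi_l\|_{L^{2\beta}([0,T],L^{2q})}$ finite, H\"older in time and Gronwall applied to $h(t)^{\beta'}$ give the a priori bound $\sup_{[0,T]}\sum_k\|\psi_k(t)\|_X<\infty$, after which your blow-up alternative and the local theory of Theorem \ref{gidi} conclude as you describe. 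Your first step (the $L^2$ construction, including the role of $\alpha>\frac{2d}{2d-1}$ and radiality in case (b)) is correct and matches the paper; it is the persistence estimate that must be restructured as above.
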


In the case $N=1$,  first author in  \cite[Theorem 1.1]{Bhimani2018global}  established the global well-posedness of~ \eqref{RHF} in  $M^{p,q}(\mathbb R^d) $   when $1\leq p \leq 2,$  and $1\leq q < \frac{2d}{d+\gamma}.$ Part (ii) of Theorem \ref{dgt}  proves this result for  the end point case  for any $N\geq 1.$  We note that  $M^{p,q}(\mathbb R^d) \subset L^p(\mathbb R^d) \ (q\leq \min\{ p, p'\})$  is sharp embedding and up to now we cannot get the global well-posedness of  \eqref{HF} in  $L^{p}(\mathbb R^d) (1\leq p <2)$ but in  $M^{p,q}(\mathbb R^d)$ (Theorem \ref{dgt}).  Noticing for $s>\gamma/2,$ we have  sharp embedding $ H^{s}(\mathbb R^d) \subset M^{2,\frac{2d}{d+\gamma}}(\mathbb R^d)\subset L^2(\mathbb R^d)$ (see Proposition \ref{msk} below), Theorem \ref{dgt} reveals that we can solve \eqref{HF} and \eqref{RHF}  with Cauchy data beyond   in $H^s(\mathbb R^d) (s>\gamma/2).$

\begin{Remark}  The sign of  $\kappa$  in Hartree and Fock terms determines the defocusing and  focusing character of the nonlinearity, but, as we shall see, this character will play no role in our analysis on modulation spaces, as we do not use the  conservation  of energy of \eqref{HF} and \eqref{RHF} to achieve global existence.
\end{Remark}

\subsection{Hartree-Fock equation with harmonic potential}
The  Hartree-Fock equation  with the harmonic  potential  of $N$ particles is given by
\begin{equation}\label{HFHP}
\begin{cases} i\partial_t \psi_k - \left(-\Delta +|x|^2\right) \psi_k= \sum_{l=1}^{N} \left(\frac{\kappa}{|x|^{\gamma}} \ast |\psi_l|^2 \right)\psi_{k} + \sum_{l=1}^{N} \psi_{l} \left(\frac{\kappa}{|x|^{\gamma}} \ast \{\overline{\psi_l} \psi_k\} \right),\\
\psi_{k|t=0}=\psi_{0k}
\end{cases}
\end{equation}
and  the  corresponding reduced Hartree-Fock equation with the  harmonic  potential:
\begin{equation}\label{RHFHP}
\begin{cases} i\partial_t \psi_k - \left(-\Delta +|x|^2\right) \psi_k= \sum_{l=1}^{N} \left(\frac{\kappa}{|x|^{\gamma}} \ast |\psi_l|^2 \right)\psi_{k},\\
\psi_{k|t=0}=\psi_{0k},
\end{cases}
\end{equation}
where   $t\in \R, \psi_k:\R^d\times \R \to \C,$ $k=1,2,..., N,   0<\gamma<d,$  $ \kappa$ is constant.
In this context we  establish the following  result.

\begin{theorem}\label{mtg}  Let
 $ 0<\gamma < \text{min} \{2, d/2\}, d\in \mathbb N$ and $1\leq p \leq \frac{2d}{d+\gamma}.$
Given initial data $ (\psi_{01},...,\psi_{0N})\in \left(M^{p,p}(\mathbb R^{d})\right)^N,$ the following statements hold.   
\begin{enumerate}
\item[(i)]  There exists a  unique global solution of \eqref{HFHP} such that $$ (\psi_1,...,\psi_{N})\in \left( C([0,\infty), M^{p,p}(\mathbb R^{d})) \cap L^{8/\gamma}_{loc}([0,\infty), L^{4d/(2d-\gamma)} (\mathbb R^d)) \right)^N.$$
\item[(ii)] There exists a  unique global solution of \eqref{RHFHP} such that $$ (\psi_1,...,\psi_{N})\in \left( C([0,\infty), M^{p,p}(\mathbb R^{d})) \cap L^{8/\gamma}_{loc}([0,\infty), L^{4d/(2d-\gamma)} (\mathbb R^d)) \right)^N.$$
\end{enumerate}
\end{theorem}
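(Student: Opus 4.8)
The plan is to solve \eqref{HFHP} and \eqref{RHFHP} by a fixed-point argument on their Duhamel formulations and then to upgrade the local solutions to global ones by combining mass conservation with the embedding $M^{p,p}(\R^d)\hookrightarrow L^2(\R^d)$, valid here since $p\le \tfrac{2d}{d+\gamma}\le 2$. Writing $U(t)=e^{-it(-\Delta+|x|^2)}$ for the harmonic oscillator propagator, \eqref{HFHP} is equivalent to
$$\psi_k(t)=U(t)\psi_{0k}-i\int_0^t U(t-s)\,\mathcal N_k(\psi(s))\,ds,\qquad k=1,\dots,N,$$
where $\mathcal N_k$ collects the Hartree and Fock cubic terms (only the Hartree term for \eqref{RHFHP}). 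Two features of $U(t)$ drive the argument. First, since $U(t)$ is, up to normalization, the metaplectic operator attached to the phase-space rotation generated by the Hermite flow, it acts boundedly on the \emph{symmetric} modulation spaces $M^{p,p}(\R^d)$, with operator norm controlled on each finite time interval (see \cite{wang2011harmonic}); this is precisely why the statement is restricted to $M^{p,p}$ rather than to a general $M^{p,q}$, for which the rotation mixes position and frequency. Second, $U(t)$ obeys the same local-in-time Strichartz estimates as the free Schr\"odinger group, and one checks that the pair $(q,r)=(8/\gamma,\,4d/(2d-\gamma))$ is admissible, since $\tfrac 2q+\tfrac dr=\tfrac\gamma4+\tfrac{2d-\gamma}{4}=\tfrac d2$.

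First I would establish the nonlinear estimates in the working space $\mathcal X_T=C([0,T],M^{p,p})\cap L^{8/\gamma}([0,T],L^{4d/(2d-\gamma)})$. The $M^{p,p}$ component is handled by the multilinear modulation-space bounds proved earlier in the paper, which control the Riesz convolution $\tfrac{\kappa}{|x|^{\gamma}}\ast(\overline{\psi_l}\psi_m)$ together with the pointwise product against $\psi_k$; the Strichartz component is controlled by Hardy--Littlewood--Sobolev (for the convolution) followed by H\"older in the space-time Lebesgue norms, the hypothesis $0<\gamma<\min\{2,d/2\}$ being exactly what renders the HLS exponents and the constraint $p\le\tfrac{2d}{d+\gamma}$ compatible. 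Feeding these into the propagator and Strichartz bounds yields a contraction on a small ball of $\mathcal X_T$ for $T$ small depending only on the $M^{p,p}$ norms of the data, giving local existence and uniqueness for both \eqref{HFHP} and \eqref{RHFHP}; this is the analogue for the Hermite operator of Theorem \ref{gidi}.

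The decisive point for globalization is to run the $M^{p,p}$ estimate in a form that is linear in the highest norm. I would prove the trilinear bound
$$\Bigl\|\bigl(\tfrac{\kappa}{|x|^{\gamma}}\ast(\overline{\psi_l}\psi_m)\bigr)\psi_k\Bigr\|_{M^{p,p}}\lesssim \|\psi_l\|_{L^2}\,\|\psi_m\|_{L^2}\,\|\psi_k\|_{M^{p,p}},$$
obtained by showing that the potential $\tfrac{\kappa}{|x|^{\gamma}}\ast(\overline{\psi_l}\psi_m)$ lies in the multiplier algebra $M^{\infty,1}(\R^d)$ with norm $\lesssim\|\psi_l\|_{L^2}\|\psi_m\|_{L^2}$, whence multiplication by it is bounded on $M^{p,p}$. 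Inserting this into the Duhamel formula and using the boundedness of $U(t)$ on $M^{p,p}$ gives
$$\|\psi_k(t)\|_{M^{p,p}}\lesssim \|\psi_{0k}\|_{M^{p,p}}+\int_0^t\Bigl(\sum_{l=1}^N\|\psi_l(s)\|_{L^2}^2\Bigr)\|\psi_k(s)\|_{M^{p,p}}\,ds.$$
Because \eqref{HFHP} and \eqref{RHFHP} conserve the masses $\|\psi_l(t)\|_{L^2}=\|\psi_{0l}\|_{L^2}$ (seen by pairing the $k$-th equation with $\overline{\psi_k}$ and taking imaginary parts; the Fock contribution $\sum_l\int \overline{\psi_l}\psi_k\,\overline{(\tfrac{\kappa}{|x|^\gamma}\ast(\overline{\psi_l}\psi_k))}\,dx$ is real since $|x|^{-\gamma}$ is real and even) and are finite at $t=0$ by $M^{p,p}\hookrightarrow L^2$, the coefficient in the integral is a constant depending only on the data. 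Gronwall's inequality then yields a bound $\|\psi_k(t)\|_{M^{p,p}}\le C e^{Ct}$ (at worst times a polynomial factor from the propagator norm), so the $M^{p,p}$ norm cannot blow up in finite time; iterating the local theory with these a priori bounds extends the solution to all of $[0,\infty)$ and keeps the local Strichartz norms finite on each $[0,T]$.

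I expect the main obstacle to be the trilinear estimate above: one must control the Riesz potential $\tfrac{\kappa}{|x|^{\gamma}}\ast(\overline{\psi_l}\psi_m)$ in $M^{\infty,1}(\R^d)$ \emph{purely} by the $L^2$ norms of $\psi_l,\psi_m$, because modulation spaces are not stable under multiplication by bare $L^\infty$ functions, and the only conserved quantity available is the mass. Extracting exactly the $L^2$ norms --- rather than stronger modulation-space norms --- from the convolution while respecting the window decomposition defining $M^{\infty,1}$ is where the constraints $\gamma<\min\{2,d/2\}$ and $p\le \tfrac{2d}{d+\gamma}$ must be used sharply, and it is precisely this step that decouples the argument from any use of the conserved energy, in accordance with the Remark above.
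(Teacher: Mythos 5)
Your local theory is sound and matches the paper's: boundedness of $e^{it(-\Delta+|x|^2)}$ on $M^{p,p}$ (Theorem \ref{mso}) plus the trilinear bound of Proposition \ref{t1} give the contraction of Theorem \ref{gidih}, and your admissibility check for $(8/\gamma,4d/(2d-\gamma))$ is correct. The gap is in your globalization step. The estimate you propose to prove,
\begin{equation*}
\bigl\| \tfrac{\kappa}{|x|^{\gamma}}\ast(\overline{\psi_l}\psi_m)\bigr\|_{M^{\infty,1}}\lesssim \|\psi_l\|_{L^2}\,\|\psi_m\|_{L^2},
\end{equation*}
is false, and no choice of $\gamma<\min\{2,d/2\}$ or $p\le \tfrac{2d}{d+\gamma}$ rescues it. Since $M^{\infty,1}(\R^d)\hookrightarrow L^{\infty}(\R^d)$, your bound would force $\||x|^{-\gamma}\ast|f|^2\|_{L^{\infty}}\lesssim\|f\|_{L^2}^2$ for all $f\in L^2$; but $|f|^2$ ranges over all nonnegative $L^1$ functions, and taking $f(y)=|y|^{-\beta}\chi_{\{|y|\le 1\}}$ with $\tfrac{d-\gamma}{2}\le\beta<\tfrac d2$ gives $f\in L^2$ while $\bigl(|x|^{-\gamma}\ast|f|^2\bigr)(0)=\int_{|y|\le 1}|y|^{-\gamma-2\beta}\,dy=\infty$. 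So mass alone cannot control the potential as a pointwise multiplier on $M^{p,p}$; the obstacle you flagged at the end is not merely technical, it is fatal to the linear-in-highest-norm Gronwall scheme as you set it up, and with it your exponential a priori bound collapses.

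The paper's route (Lemma \ref{ms}, transplanted to the Hermite propagator via Propositions \ref{seh} and \ref{miD}) repairs exactly this point by splitting on the Fourier side: $\widehat{K}=k_1+k_2$ with $k_1=\chi_{\{|\xi|\le 1\}}\widehat K\in L^1$ and $k_2=\chi_{\{|\xi|>1\}}\widehat K\in L^{q}$ for $q>\tfrac{d}{d-\gamma}$. The low-frequency piece is indeed controlled by the conserved mass, $\|k_1\|_{L^1}\|\psi_l\|_{L^2}^2$, which is the valid fragment of your idea; but the high-frequency piece costs $\|k_2\|_{L^q}\|\widehat{\overline{\psi_l}\psi_k}\|_{L^{q'}}\lesssim\|\psi_l\|_{L^{2q}}\|\psi_k\|_{L^{2q}}$ by Hausdorff--Young (needing $q\le 2$, i.e.\ $\gamma<d/2$), and these $L^{2q}$ norms are \emph{not} conserved. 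They are instead integrated in time using the Strichartz bounds inherited from the globally solved $L^2$ problem (Proposition \ref{miD}, built on Proposition \ref{seh}): one picks $\beta$ with $(2\beta,2q)$ admissible, which is possible precisely when $\gamma<2$, applies H\"older in $t$, and closes with a Gronwall inequality for $h(t)^{\beta'}$ rather than for $h(t)$ itself. So the Strichartz component of your working space is not optional scaffolding for the local theory; it is the mechanism by which the global $L^2$ solution feeds quantitative space-time integrability into the $M^{p,p}$ a priori estimate. If you replace your claimed trilinear bound by this decomposition-plus-Strichartz argument, the rest of your proposal (blow-up alternative, iteration of the local theory) goes through as in the paper.
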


In the case $N=1,$  first author in  \cite[Theorem 1.1]{bhimani2018nonlinear} proved that~\eqref{RHFHP} is globally well-posed in   $M^{p,p}(\mathbb R^d)$ for $1\leq p <\frac{2d}{d+ \gamma}.$   Part (ii) of Theorem \ref{mtg}  establishes this result for  the end point case  for any $N\geq 1.$ 


The rest of the paper is organized as follows.  In Section~\ref{sp}, we introduce  some notations and preliminary results which will be used  in the sequel.  In Section~\ref{te}, we prove the boundedness for Hartree nonlinearity on modulation spaces.   In Section~\ref{pmain} we establish two of our main results, namely Theorems \ref{gidi}, and \ref{dgt}. Finally, in Section~\ref{whfhp} we prove Theorem~\ref{mtg}.

\section{Preliminaries}\label{sp}  
\subsection{Notations} The notation $A \lesssim B $ means $A \leq cB$ for  some constant $c > 0 $, whereas $ A \asymp B $ means $c^{-1}A\leq B\leq cA $ for some $c\geq 1$. Given $a, b\in \R$ we let $a\wedge b = \text{min} \{ a, b\}.$ The symbol $A_{1}\hookrightarrow A_{2}$ denotes the continuous embedding  of the topological linear space $A_{1}$ into $A_{2}.$  Put $\mathbb N_0= \mathbb N \cup \{0\}.$
If $\beta=(\beta_1,\cdots, \beta_d)\in \mathbb N_0^d$ is a multi-index, we set
$$|\beta|=\sum_{1}^d\beta_j,\  \  \beta != \prod_{1}^d \beta_j!, \ \ \  \partial^{\beta}=D^{\beta}=\left(\frac{\partial}{\partial x_1}\right)^{\beta_1}\cdots \left(\frac{\partial}{\partial x_d}\right)^{\beta_d},$$
and if $x= (x_1, \cdots x_d)\in \mathbb R^d,$
$$x^{\beta}= \prod_1^d x_j^{\beta_j}.$$
The   $L^{p}(\mathbb R^{d})$ norm is denoted by 
 $$\|f\|_{L^{p}}=\left( \int_{\R^d} |f(x)|^p dx \right)^{1/p} \ \  (1\leq p < \infty),$$
the $L^{\infty}(\mathbb R^{d})$ norm  is $\|f\|_{L^{\infty}}= \text{ess.sup}_{ x\in \mathbb R^{d}}|f(x)| $. For $1\leq p\leq \infty,$ $p'$ denotes the H\"older conjugate of $p,$ that is, $1/p+1/p'=1.$  
We use  $L_t^{r} (I, X)$ to denote the space-time norm
\[\|u\|_{L^{r}_t(I, X)}=  \left( \int_{I} \|u\|^r_{X}  dt \right)^{1/r},\]
where $I\subset \R$ is an interval and $X$ is a Banach space. The Schwartz space is denoted by  $\mathcal{S}(\mathbb R^{d})$, and, its dual, the space of tempered distributions is  denoted by $\mathcal{S'}(\mathbb R^{d}).$ For $x=(x_1,\cdots, x_d), y=(y_1,\cdots, y_d) \in \mathbb R^d, $ we put $x\cdot y = \sum_{i=1}^{d} x_i y_i.$
Let $\mathcal{F}:\mathcal{S}(\mathbb R^{d})\to \mathcal{S}(\mathbb R^{d})$ be the Fourier transform  defined by  
\begin{eqnarray*}
\mathcal{F}f(w)=\widehat{f}(w)=\int_{\mathbb R^{d}} f(t) e^{- 2\pi i t\cdot w}dt, \  w\in \mathbb R^d.
\end{eqnarray*}
Then $\mathcal{F}$ is an isomorphism on $ \mathcal{S}(\mathbb R^{d})$ which  
uniquely extends to an isomorphism on $ \mathcal{S}'(\mathbb R^d).$

The Fourier-Lebesgue spaces $\mathcal{F}L^p(\mathbb R^d)$ is defined by 
$$\mathcal{F}L^p(\mathbb R^d)= \left\{f\in \mathcal{S}'(\mathbb R^d): \|f\|_{\mathcal{F}L^{p}}:= \|\hat{f}\|_{L^{p}}< \infty \right\}.$$
For $p\in (1, \infty)$ and $s\geq 0$, $W^{s, p}(\R^d)$ will denote standard   Sobolev  space. In particular, 
if $s$ is an  integer, then $W^{s, p}$  consists of  $L^p-$functions with derivatives in  $L^p$ up to order $s$, hence coincides with the $L^p_s-$Sobolev space, also known as Bessel potential
space, defined for  $s\in \R$ by
$$L^p_s(\mathbb R^d)=\left\{f\in \mathcal{S}'(\mathbb R^d): \|f\|_{L^{p}_s}:=\left\| \mathcal{F}^{-1} [\langle \cdot \rangle^s \mathcal{F}(f)]\right\|_{L^p}< \infty \right\},$$
where  $\langle \xi \rangle^{s} = (1+ |\xi|^2)^{s/2} \ (\xi \in \mathbb R^d).$ Note that $L^p_{s_1}(\R^d) \hookrightarrow L^p_{s_2}(\R^d)$ if $s_2\leq s_1.$  

\subsection{Modulation  spaces} Feichtinger  \cite{feichtinger1983modulation} introduced the modulation spaces by imposing integrability conditions on the \emph{short-time Fourier transform} (STFT) of functions or distributions defined on $\mathbb R^d$. To be specific,  the  STFT  of a function $f$ with respect to a window function $g \in {\mathcal S}(\R^d)$ is defined by
\begin{eqnarray*}\label{stft}
V_{g}f(x,w)= \int_{\mathbb R^{d}} f(t) \overline{g(t-x)} e^{-2\pi i w\cdot t}dt,  \  (x, w) \in \mathbb R^{2d}
\end{eqnarray*}
 whenever the integral exists.
For $x, w \in \R^d$ the translation operator $T_x$ and the modulation operator $M_w$ are
defined by $T_{x}f(t)= f(t-x)$ and $M_{w}f(t)= e^{2\pi i w\cdot t} f(t).$ In terms of these
operators the STFT may be expressed as
\begin{eqnarray}
\label{ipform} V_{g}f(x,w)=\langle f, M_{w}T_{x}g\rangle\nonumber
\end{eqnarray}
 where $\langle f, g\rangle$ denotes the inner product for $L^2$ functions,
or the action of the tempered distribution $f$ on the Schwartz class function $g$.  Thus $V: (f,g) \to V_g(f)$ extends to a bilinear form on $\mathcal{S}'(\R^d) \times \mathcal{S}(\R^d)$ and $V_g(f)$ defines a uniformly continuous function on $\R^{d} \times \R^d$ whenever $f \in \mathcal{S}'(\R^d) $ and $g \in  \mathcal{S}(\R^d)$.

\begin{Definition}[Modulation spaces]\label{ms} Let $1 \leq p,q \leq \infty, s \in \R$ and $0\neq g \in{\mathcal S}(\R^d)$. The  weighted  modulation space   $M_s^{p,q}(\R^d)$
is defined to be the space of all tempered distributions $f$ for which the following  norm is finite:
$$ \|f\|_{M_s^{p,q}}=  \left(\int_{\R^d}\left(\int_{\R^d} |V_{g}f(x,w)|^{p} dx\right)^{q/p} (1+|w|^2)^{sq/2} \, dw\right)^{1/q},$$ for $ 1 \leq p,q <\infty$. If $p$ or $q$ is infinite, $\|f\|_{M_s^{p,q}}$ is defined by replacing the corresponding integral by the essential supremum.  For $s=0,$ we write $M^{p,q}_0(\R^d)= M^{p,q}(\R^d).$
\end{Definition}

It is standard to show that this definition is  independent of the choice of 
the particular window function, e.g., see,   \cite[Proposition 11.3.2(c)]{grochenig2013foundations}.

Using a uniform partition of the frequency domain,  
one can obtain an equivalent definition of the modulation spaces  \cite{wang2007global}  as follows. Let  $Q_k$ be the unit cube with the center at  $k$, so $\{ Q_{k}\}_{k \in \mathbb Z^d}$ constitutes a decomposition of  $\mathbb R^d,$ that is, $\mathbb R^d = \cup_{k\in \mathbb Z^{d}} Q_{k}.$
Let   $\rho \in \mathcal{S}(\mathbb R^d),$  $\rho: \mathbb R^d \to [0,1]$  be  a smooth function satisfying   $\rho(\xi)= 1 \  \text{if} \ \ |\xi|_{\infty}\leq \frac{1}{2} $ and $\rho(\xi)=
0 \  \text{if} \ \ |\xi|_{\infty}\geq  1,$ where $|\xi|_{\infty}=\max_{k=1, \hdots, d}|\xi_k|$.  Let  $\rho_k$ be a translate of $\rho,$ that is,
\[ \rho_k(\xi)= \rho(\xi -k) \ (k \in \mathbb Z^d).\]
For each $0\neq k\in \Z$ let 
$$\sigma_{k}(\xi)= \frac{\rho_{k}(\xi)}{\sum_{l\in\mathbb Z^{d}}\rho_{l}(\xi)} $$
and when $k=0$, we simply write $\sigma_0=\sigma.$   Then   $\{ \sigma_k(\xi)\}_{k\in \mathbb Z^d}$ satisfies the following properties
\begin{eqnarray*}
\begin{cases} |\sigma_{k}(\xi)|\geq c, \forall \xi \in Q_{k},\\
\text{supp} \ \sigma_{k} \subset \{\xi: |\xi-k|_{\infty}\leq 1 \},\\
\sum_{k\in \mathbb Z^{d}} \sigma_{k}(\xi)\equiv 1, \forall \xi \in \mathbb R^d,\\
|D^{\alpha}\sigma_{k}(\xi)|\leq C_{|\alpha|}, \forall \xi \in \mathbb R^d, \alpha \in (\mathbb N \cup \{0\})^{d}
\end{cases}
\end{eqnarray*} for some positive constant $c$. 

The frequency-uniform decomposition operators can be defined by 
\begin{eqnarray}\label{wn}
\square_k = \mathcal{F}^{-1} \sigma_k \mathcal{F}.
\end{eqnarray}
For $1\leq p, q \leq \infty, s\in \mathbb R,$  it is known \cite{feichtinger1983modulation} that 
\begin{eqnarray}\label{edfw}
\|f\|_{M^{p,q}_s}\asymp  \left( \sum_{k\in \mathbb Z^d} \| \square_k(f)\|^q_{L^p} (1+|k|)^{sq} \right)^{1/q},
\end{eqnarray}
with natural modifications for $p, q= \infty.$
As observed in \cite{wang2011harmonic}, the frequency-uniform decomposition operators obey an almost orthogonality  relation: for each $k\in \Z$
 \begin{eqnarray}\label{aor}
 \square_k= \sum_{\|\ell \|_{\infty}\leq 1} \square_{k+\ell}\square_{k}
\end{eqnarray}
where $\|\ell\|_{\infty}= \max \{|\ell_i|:\ell_i \in \mathbb Z, i=1,..., d\}.$ 

We now list some basic properties of the modulation spaces.
\begin{Lemma}  \label{rl} Let $p,q, p_{i}, q_{i}\in [1, \infty]$  $(i=1,2), s, s_1, s_2 \in \R.$ Then
\begin{enumerate}
\item[(1)] \label{ir} $M^{p_{1}, q_{1}}_{s_1}(\mathbb R^{d}) \hookrightarrow M^{p_{2}, q_{2}}_{s_2}(\mathbb R^{d})$ whenever $p_{1}\leq p_{2}$ and $q_{1}\leq q_{2}$ and $s_2\leq s_1.$
\item[(2)] \label{el} $M^{p,q_{1}}(\mathbb R^{d}) \hookrightarrow L^{p}(\mathbb R^{d}) \hookrightarrow M^{p,q_{2}}(\mathbb R^{d})$ holds for $q_{1}\leq \text{min} \{p, p'\}$ and $q_{2}\geq \text{max} \{p, p'\}$ with $\frac{1}{p}+\frac{1}{p'}=1.$
\item[(3)]  \label{rcs} $M^{\min\{p', 2\}, p}(\mathbb R^d) \hookrightarrow \mathcal{F} L^{p}(\mathbb R^d)\hookrightarrow M^{\max \{p',2\},p}(\mathbb R^d),  \frac{1}{p}+\frac{1}{p'}=1.$
\item[(4)]  \label{d} $\mathcal{S}(\mathbb R^{d})$ is dense in  $M^{p,q}(\mathbb R^{d})$ if $p$ and $q<\infty.$
\item[(5)] \label{fi} The Fourier transform $\mathcal{F}:M_s^{p,p}(\mathbb R^{d})\to M_s^{p,p}(\mathbb R^{d})$ is an isomorphism.
\item[(6)] The space  $M_s^{p,q}(\mathbb R^{d})$ is a  Banach space.
\item[(7)]  \label{ic}The space $M_s^{p,q}(\mathbb R^{d})$ is invariant under complex conjugation.
\end{enumerate}

\begin{proof}
For the proof of parts (1), (2), (3), and (4)  see  \cite[Theorem 12.2.2]{grochenig2013foundations}, 
\cite[Proposition 1.7]{toft2004continuity},   \cite[Corollary 1.1]{cunanan2015inclusion}   and \cite[Proposition 11.3.4]{grochenig2013foundations} respectively.  The proof  of statement (5) can be derived from  the fundamental identity of time-frequency analysis: 
$$ V_gf(x, w) = e^{- i 2 \pi  x \cdot w } \, V_{\widehat{g}} \widehat{f}(w, -x),$$
which is easy to obtain. The proof of statement (6) is trivial, indeed, we have $\|f\|_{M^{p,q}}=\|\bar{f}\|_{M^{p,q}}.$
\end{proof}
\end{Lemma}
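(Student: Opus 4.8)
The plan is to reduce every assertion to the discrete, frequency-uniform description of the norm in \eqref{edfw}, together with the mapping properties of the short-time Fourier transform, and to treat the seven parts in four groups. For the embeddings in (1) I would work entirely with the block norm $\big(\sum_k \|\square_k f\|_{L^p}^q (1+|k|)^{sq}\big)^{1/q}$. Monotonicity in $q$ is the elementary inclusion $\ell^{q_1}\hookrightarrow \ell^{q_2}$ for $q_1\le q_2$, and the condition $s_2\le s_1$ gives $(1+|k|)^{s_2}\le (1+|k|)^{s_1}$ termwise. The only genuine point is monotonicity in $p$: since each $\square_k f$ is band-limited to a cube of side comparable to one centered at $k$, a Bernstein--Nikolskii inequality yields $\|\square_k f\|_{L^{p_2}}\lesssim \|\square_k f\|_{L^{p_1}}$ for $p_1\le p_2$ with a constant independent of $k$ (the cubes are translates of one another, so the constant cannot depend on the center), and summing in $k$ closes the argument.

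The $L^p$ and $\mathcal{F}L^p$ comparisons in (2) and (3) I would obtain blockwise and by interpolation anchored at the two computable endpoints $M^{2,2}=L^2$ (a Plancherel identity for the STFT) and the Hausdorff--Young relation applied to each band-limited piece. Indeed, since $\widehat{\square_k f}=\sigma_k\widehat f$ is supported near $k$, one has $\|\widehat f\|_{L^p}^p\asymp \sum_k \|\sigma_k\widehat f\|_{L^p}^p$, which converts $\|f\|_{\mathcal{F}L^p}$ into a sum of block norms; Hausdorff--Young on each block, combined with the Bernstein step of part (1), produces the two-sided embedding (3). The inclusions in (2), namely $M^{p,q_1}\hookrightarrow L^p$ for $q_1\le \min\{p,p'\}$ and $L^p\hookrightarrow M^{p,q_2}$ for $q_2\ge \max\{p,p'\}$, are dual to one another under $(M^{p,q})'=M^{p',q'}$, so it suffices to prove one and interpolate against $M^{2,2}=L^2$. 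Rather than reproduce these standard computations I would cite the references already quoted in the statement.

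For the density in (4) with $p,q<\infty$ I would show that the truncated sums $f_N=\sum_{|k|_\infty\le N}\square_k f$ converge to $f$ in $M^{p,q}$ by dominated convergence applied to the $\ell^q$ sum in \eqref{edfw}, and that each band-limited piece $\square_k f\in L^p$ can be approximated in $L^p$, hence---being band-limited---in $M^{p,q}$ by the Bernstein estimate of part (1), by Schwartz functions.

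Finally, the structural facts (5)--(7) follow from the fundamental identity $V_g f(x,w)=e^{-2\pi i x\cdot w}\,V_{\widehat g}\widehat f(w,-x)$ and from the fact that $f\mapsto V_g f$ is, up to a constant, an isometry. For (5) the change of variables $(x,w)\mapsto (w,-x)$ has unit Jacobian while $|e^{-2\pi i x\cdot w}|=1$, so on the diagonal $p=q$ the STFT norm is preserved and $\|\widehat f\|_{M^{p,p}}=\|f\|_{M^{p,p}}$, giving an isomorphism. For (7) the companion identity $V_g(\overline f)(x,w)=\overline{V_{\overline g}f(x,-w)}$ together with $\langle -w\rangle=\langle w\rangle$ yields $\|\overline f\|_{M^{p,q}_s}=\|f\|_{M^{p,q}_s}$. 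For (6) completeness follows because $V_g$ embeds $M^{p,q}_s$ isometrically (up to a constant) onto a closed subspace of the complete weighted mixed-norm Lebesgue space. I expect the main obstacle to be the weighted version of (5): since the weight $\langle w\rangle^s$ sits only on the frequency variable, the change of variables transports it onto the position variable, so for $s\ne 0$ one must either symmetrize the weight or restrict to $s=0$; I would resolve this by noting that the paper invokes the diagonal isomorphism only at $s=0$.
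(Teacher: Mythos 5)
Your proposal is correct in substance but takes a genuinely different, more self-contained route than the paper, whose proof of this lemma consists almost entirely of citations (Gr\"ochenig for (1) and (4), Toft for (2), Cunanan--Kobayashi--Sugimoto for (3)) plus the fundamental identity for (5) and the one-line conjugation identity for (7). You instead derive (1)--(4) directly from the discrete norm \eqref{edfw}: the $\ell^{q_1}\hookrightarrow\ell^{q_2}$ inclusion and weight monotonicity handle $q$ and $s$, and the uniform Bernstein--Nikolskii estimate for the band-limited blocks $\square_k f$ (uniform because the spectra are translates of a fixed cube, and modulation is an $L^p$-isometry) handles $p$; this is exactly the mechanism behind the cited results, and what it buys is independence from the references at the cost of length. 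Two small patches are worth recording. In (4), approximating $\square_k f$ in $L^p$ by arbitrary Schwartz functions does not preserve band-limitation, so the error is not covered by the Bernstein step as written; you should convolve the approximants with $\mathcal{F}^{-1}\tilde\sigma_k$ for a fattened bump $\tilde\sigma_k\equiv 1$ on $\mathrm{supp}\,\sigma_k$, after which the differences have uniformly compact spectrum and $\|h\|_{M^{p,q}}\lesssim\|h\|_{L^p}$ applies. In (2), the duality $(M^{p,q})'=M^{p',q'}$ needs $p,q<\infty$, so the endpoint exponents require separate (standard) treatment; since you ultimately defer to the same citations as the paper, this is acceptable as a sketch. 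Finally, your observation about (5) is a genuine improvement on the paper's proof: the fundamental identity transports the weight $\langle w\rangle^s$ onto the position variable, so the stated isomorphism $\mathcal{F}\colon M^{p,p}_s\to M^{p,p}_s$ is only justified (and, at $p=2$, where $M^{2,2}_s=H^s$, only true) for $s=0$ or for symmetric weights; the paper indeed uses (5) only in the unweighted case (in the proof of Proposition \ref{msk}(iii)), and your restriction to $s=0$ is the correct resolution. Your proofs of (6) and (7) via the closed range of $V_g$ and the identity $V_g(\overline f)(x,w)=\overline{V_{\overline g}f(x,-w)}$ are standard and fine, and they repair the paper's evident mislabeling, where the conjugation identity is offered as the proof of (6).
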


We can obtain examples of functions in the modulation spaces via embedding relations with certain classical functions spaces. For example the following result can be proved.

\begin{proposition}[Examples]\label{msk} The following statements hold.
\begin{enumerate}
\item[(i)] \label{smi} ( \cite{kobayashi2011inclusion},  \cite[Theorem 3.8]{ruzhansky2012modulation}) Let $1\leq p, q \leq \infty,$ $s_1, s_2 \in \R,$ and 
$$\tau (p,q)= \max \left\{ 0, d\left( \frac{1}{q}- \frac{1}{p}\right), d\left( \frac{1}{q}+ \frac{1}{p}-1\right) \right\}.$$Then 
$L^{p}_{s_1}(\R^d) \subset M^{p,q}_{s_2}(\R^d)$ if and only if one of the following conditions is satisfied:
$$ \begin{cases}
q\geq p>1, s_1\geq s_2 + \tau(p,q), or\\
p>q, s_1>s_2+ \tau(p,q), or\\
p=1, q=\infty, s_1\geq s_2 + \tau(1, \infty), or\\
p=1, q\neq \infty, s_1>s_2+\tau (1, q). 
\end{cases}
$$

\item[(ii)]  \label{rsmi} (\cite{kobayashi2011inclusion}, \cite[Theorem 3.8]{ruzhansky2012modulation}) Let $1\leq p, q \leq \infty,$ $s_1, s_2 \in \R,$ and 
$$\sigma (p,q)= \max \left\{ 0, d\left( \frac{1}{p}- \frac{1}{q}\right), d\left( 1-\frac{1}{q}- \frac{1}{p}\right) \right\}.$$Then 
$ M^{p,q}_{s_1}(\R^d) \subset  L^{p}_{s_2}(\R^d)$ if and only if one of the following conditions is satisfied:
$$ \begin{cases}
q\leq p<\infty, s_1\geq s_2 + \sigma(p,q), or\\
p<q, s_1>s_2+ \sigma(p,q), or \\
p=\infty, q=1, s_1\geq s_2 + \sigma(\infty, 1), or \\
p=\infty, q\neq 1, s_1>s_2+\sigma (\infty, q).
\end{cases} $$
 
\item[(iii)] \label{lsi}   For  $1\leq p<2,$  $M^{p,p}(\mathbb R^d) \subset L^p(\mathbb R^d)$ and  there exists $f\in L^{p}(\mathbb R^d)$ such that   $f \notin M^{p,p}(\mathbb R^d).$ 
\item[(iv)] \label{csm} For $s>\frac{\gamma}{2}>0,$ $H^s(\mathbb R^d) \subset M^{2,\frac{2d}{d+\gamma}}(\mathbb R^d)$  and there exists  $f\in M^{2,\frac{2d}{d+\gamma}}(\mathbb R^d)$ such that $f\notin H^s (\mathbb R^d).$ 

\item[(v)]  \label{esw} For $2<p<\infty$ and $s>d \left(1- \frac{1}{p}\right),$ $L^p_s(\mathbb R^d)\subset M^{p,1}(\mathbb R^d)$ and  there exists $f\in M^{p,1}(\mathbb R^d)$ such that $f\notin L^p_s(\mathbb R^d).$
\end{enumerate}
\end{proposition}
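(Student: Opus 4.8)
The plan is to treat parts (i) and (ii) as known inputs, since they are quoted from \cite{kobayashi2011inclusion} and \cite[Theorem 3.8]{ruzhansky2012modulation}; the content to be verified is that parts (iii), (iv), and (v) each follow by specializing these two biconditional characterizations to the relevant exponents and comparing the resulting arithmetic conditions on $\tau$ and $\sigma$. In every case I would extract the inclusion itself (the ``$\subset$'' part) from whichever of (i) or (ii) matches the direction of the embedding, while the strictness --- i.e. the existence of a function in the larger space but not in the smaller --- comes for free from the ``only if'' half of the opposite characterization, by checking that the numerical conditions for the reverse inclusion are violated. Thus no explicit counterexample need be constructed.

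For (iii), with $1 \le p < 2$, I would apply (ii) with $q = p$ and $s_1 = s_2 = 0$: since $\sigma(p,p) = \max\{0, d(1 - 2/p)\} = 0$ for $p \le 2$, the requirement $0 \ge 0 + \sigma(p,p)$ holds and yields $M^{p,p} \hookrightarrow L^p$. For strictness I would invoke (i), again with $q = p$ and $s_1 = s_2 = 0$: here $\tau(p,p) = d(2/p - 1) > 0$ for $p < 2$, so the only potentially applicable clauses fail --- for $1 < p < 2$ the clause $q \ge p > 1$ would force $0 \ge d(2/p - 1) > 0$, and for $p = 1$ the clause ``$p = 1$, $q \ne \infty$'' would force $0 > \tau(1,1) = d$ --- and the ``only if'' direction of (i) then guarantees $L^p \not\subset M^{p,p}$.

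For (iv) and (v) the same two-step recipe applies, the only work being the computation of the mixed exponents. For (iv), taking $p = 2$, $q = 2d/(d+\gamma)$, $s_1 = s$, $s_2 = 0$, one computes $1/q - 1/2 = \gamma/(2d)$, whence $\tau(2, q) = \gamma/2$; since $q < 2 = p$, the relevant branch of (i) demands the strict inequality $s > \gamma/2$, which is exactly the hypothesis, giving $H^s = L^2_s \hookrightarrow M^{2,q}$. Strictness comes from (ii) with $s_1 = 0$, $s_2 = s$: as $\sigma(2,q) = 0$ here, the condition would read $0 \ge s > 0$, which fails. For (v), taking $q = 1$ and $p > 2$, one has $\tau(p,1) = \max\{d(1-1/p), d/p\} = d(1 - 1/p)$, so the $p > q$ branch of (i) requires $s > d(1 - 1/p)$ to give $L^p_s \hookrightarrow M^{p,1}$; and $\sigma(p,1) = 0$ makes the reverse condition $0 \ge s$ fail, yielding strictness via (ii).

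The only genuine point requiring care --- and the place where a hurried argument could go astray --- is matching each embedding to the correct clause of the biconditional, since the characterizations distinguish between the non-strict regimes ($q \ge p$, resp. $q \le p$) and the strict regimes ($p > q$, resp. $p < q$), and separately treat the endpoints $p = 1$ and $p = \infty$. Because all three parts land in the strict regime $p \ne q$ (with $p = 1$ in (iii) handled by the dedicated ``$p=1$, $q \ne \infty$'' clause), one must verify that the inequalities on $s$ are strict exactly where the theorem demands them, which is precisely why the hypotheses in (iv) and (v) are stated with the strict inequalities $s > \gamma/2$ and $s > d(1 - 1/p)$.
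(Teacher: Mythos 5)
Your proposal is correct, and for parts (iv) and (v) it is essentially the paper's own argument: the same specializations of the exponents, the same computations $\tau\left(2,\tfrac{2d}{d+\gamma}\right)=\tfrac{\gamma}{2}$ and $\tau(p,1)=d\left(1-\tfrac{1}{p}\right)$, the inclusion from the strict branch $p>q$ of (i), and the non-inclusion from the failure of the $q\leq p$ branch of (ii). The one place you genuinely diverge is part (iii): the paper takes the inclusion $M^{p,p}\subset L^p$ from Lemma \ref{rl}(2) and proves strictness by contradiction --- if $L^p=M^{p,p}$, then Lemma \ref{rl}(5) (the Fourier transform is an isomorphism of $M^{p,p}$) would make $L^p$ invariant under $\mathcal{F}$, which is false for $p\neq 2$ --- whereas you extract both halves from the sharp characterizations, using $\tau(p,p)=d\left(\tfrac{2}{p}-1\right)>0$ together with the ``only if'' direction of (i), with the $p=1$ endpoint routed through the dedicated ``$p=1$, $q\neq\infty$'' clause. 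Both are valid; your route is more uniform (a single mechanism, the biconditionals, handles all three parts with no classical input about $\mathcal{F}$ on $L^p$), while the paper's argument for (iii) is softer, needing only the qualitative fact that $L^p$, $p\neq 2$, is not Fourier-invariant rather than the full sharpness of (i). A further point in your favor: your computed values $\sigma\left(2,\tfrac{2d}{d+\gamma}\right)=0$ and $\sigma(p,1)=0$ are the correct ones --- the paper writes $-\tfrac{\gamma}{2}$ and $-\tfrac{d}{p}$, overlooking the maximum with zero in the definition of $\sigma$ --- though since either value makes the relevant inequality $s_1\geq s_2+\sigma(p,q)$ fail, the conclusions are unaffected.
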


\begin{proof} We only give proofs of the last three parts. 
\begin{enumerate}

 \item[(iii)] For $1\leq p<2,$  by part (2) of Lemma \ref{rl}, we have  $M^{p,p}(\mathbb R^d) \subset L^p(\mathbb R^d).$ We claim that  $M^{p,p}(\mathbb R^d)\subsetneq L^p(\mathbb R^d).$ If possible, suppose that claim is not true, that is,  for all $f\in L^p(\mathbb R^d),$ we have  $f\in M^{p,p}(\mathbb R^d).$ It follows that  $L^p(\mathbb R^d)= M^{p,p}(\mathbb R^d).$ But  then by part (5) of  Lemma \ref{rl},  it follows that $L^p(\mathbb R^d)$ invariant under the Fourier transform, which is a contradiction. Hence, the claim.   Similarly, for  $2<p\leq \infty,$ we have   $M^{p,p}(\mathbb R^d)\subsetneq L^p(\mathbb R^d).$
 \item[(iv)]  Noticing  $\tau  \left(2, \frac{2d}{d+\gamma} \right)=\frac{\gamma}{2},$ by part (i), we have   $H^s(\mathbb R^d)\subset M^{2,1}(\mathbb R^d)$ for $s>\gamma/2.$  We claim that  $H^s(\mathbb R^d)\subsetneq M^{2,\frac{2d}{d+\gamma}}(\mathbb R^d)$. If possible, suppose that claim is not true. Then we have $H^s(\mathbb R^d)=M^{2,\frac{2d}{d+\gamma}}(\mathbb R^d)$. But then, noticing   $\sigma \left(2, \frac{2d}{d+\gamma} \right)=
 -\frac{\gamma}{2},$   part (ii) gives contradiction. Hence, the claim.
 \item[(v)] Noticing $\tau(p,1)=d \left(1- \frac{1}{p}\right)$ and $\sigma (p,1)=-\frac{d}{p},$  parts (i) and (ii) give  $L^p_s(\mathbb R^d)\subsetneq  M^{p,1}(\mathbb R^d).$
 \end{enumerate}
\end{proof}

\begin{proposition}(Algebra property, \cite[Theorem 2.4]{toft2004continuity}) \label{gap} Let $s\geq 0$, and  $p,q, p_{i}, q_{i}\in [1, \infty]$, where   $i=0,1,2$. 
If   $\frac{1}{p_1}+ \frac{1}{p_2}= \frac{1}{p_0}$ and $\frac{1}{q_1}+\frac{1}{q_2}=1+\frac{1}{q_0}, $ then
\begin{eqnarray*}\label{prm}
M_s^{p_1, q_1}(\mathbb R^{d}) \cdot M_s^{p_{2}, q_{2}}(\mathbb R^{d}) \hookrightarrow M_s^{p_0, q_0}(\mathbb R^{d})
\end{eqnarray*}
with norm inequality $\|f g\|_{M_s^{p_0, q_0}}\lesssim \|f\|_{M_s^{p_1, q_1}} \|g\|_{M_s^{p_2,q_2}}.$
In particular, the  space $M^{p,q}(\mathbb R^{d})$ is a poinwise $\mathcal{F}L^{1}(\mathbb R^{d})$-module, that is, it satisfies
\begin{eqnarray*}
\|fg\|_{M^{p,q}} \lesssim \|f\|_{\mathcal{F}L^{1}} \|g\|_{M^{p,q}}.
\end{eqnarray*} 
\end{proposition}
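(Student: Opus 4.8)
The plan is to work with the equivalent frequency-uniform (isometric) description of the weighted modulation norm recorded in \eqref{edfw}, namely
$$\|u\|_{M^{p,q}_s}\asymp \left(\sum_{k\in\mathbb Z^d}\|\square_k u\|_{L^p}^q (1+|k|)^{sq}\right)^{1/q},$$
and to reduce the bilinear estimate to a discrete convolution inequality on weighted sequence spaces over $\mathbb Z^d$. First I would decompose both factors as $f=\sum_{m}\square_m f$ and $g=\sum_{n}\square_n g$, so that $fg=\sum_{m,n}(\square_m f)(\square_n g)$, and analyze $\square_k(fg)$ term by term.

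The crucial structural input is a frequency-support observation in the spirit of the almost orthogonality relation \eqref{aor}: since $\widehat{\square_m f}$ is supported in $\{|\xi-m|_\infty\le 1\}$ and $\widehat{\square_n g}$ in $\{|\xi-n|_\infty\le 1\}$, the product $(\square_m f)(\square_n g)$ has Fourier transform supported in $\{|\xi-m-n|_\infty\le 2\}$. Because $\sigma_k$ is supported in $\{|\xi-k|_\infty\le 1\}$, the piece $\square_k\big((\square_m f)(\square_n g)\big)$ vanishes unless $|k-m-n|_\infty\le 3$. Writing $n=k-m+\ell$ with $|\ell|_\infty\le 3$, this collapses the double sum to
$$\square_k(fg)=\sum_{|\ell|_\infty\le 3}\ \sum_{m\in\mathbb Z^d}\square_k\big((\square_m f)(\square_{k-m+\ell} g)\big).$$
I would then invoke the $k$-uniform boundedness of $\square_k$ on $L^{p_0}$ (uniform in $k$ since each $\sigma_k$ is a translate of the fixed bump $\sigma$, so the associated kernels have uniformly bounded $L^1$ norms after modulation), followed by H\"older's inequality with $\tfrac{1}{p_1}+\tfrac{1}{p_2}=\tfrac{1}{p_0}$, to obtain
$$\|\square_k(fg)\|_{L^{p_0}}\lesssim \sum_{|\ell|_\infty\le 3}\ \sum_{m\in\mathbb Z^d}\|\square_m f\|_{L^{p_1}}\,\|\square_{k-m+\ell} g\|_{L^{p_2}},$$
which for each fixed $\ell$ is a discrete convolution of the sequences $a_m=\|\square_m f\|_{L^{p_1}}$ and $b_n=\|\square_n g\|_{L^{p_2}}$ evaluated at $k+\ell$.

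To insert the weight I would use $s\ge 0$ together with the elementary inequality $1+|k|\lesssim (1+|m|)(1+|k-m+\ell|)$ valid for $|\ell|_\infty\le 3$ (which follows from $|k|\le|m|+|k-m+\ell|+|\ell|$), giving $(1+|k|)^s\lesssim(1+|m|)^s(1+|k-m+\ell|)^s$; this is precisely where the hypothesis $s\ge 0$ enters. Absorbing the weights into $\tilde a_m=(1+|m|)^s\|\square_m f\|_{L^{p_1}}$ and $\tilde b_n=(1+|n|)^s\|\square_n g\|_{L^{p_2}}$ and taking the $\ell^{q_0}_k$ norm, the finitely many (at most $7^d$) shifts in $\ell$ are harmless by translation invariance, and Young's inequality for discrete convolutions under $\tfrac{1}{q_1}+\tfrac{1}{q_2}=1+\tfrac{1}{q_0}$ yields
$$\|fg\|_{M^{p_0,q_0}_s}\lesssim \|\tilde a\|_{\ell^{q_1}}\,\|\tilde b\|_{\ell^{q_2}}=\|f\|_{M^{p_1,q_1}_s}\,\|g\|_{M^{p_2,q_2}_s},$$
after reverting to \eqref{edfw}; the cases with some exponent equal to $\infty$ need only the usual notational modifications.

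For the ``in particular'' claim I would specialize to $p_1=\infty$, $p_2=p$, $p_0=p$ and $q_1=1$, $q_2=q$, $q_0=q$, which satisfy both exponent relations, obtaining $\|fg\|_{M^{p,q}}\lesssim\|f\|_{M^{\infty,1}}\|g\|_{M^{p,q}}$, and then apply the embedding $\mathcal{F}L^1(\mathbb R^d)\hookrightarrow M^{\infty,1}(\mathbb R^d)$ that follows from part (3) of Lemma \ref{rl} taken at $p=1$. The main obstacle is the frequency-support bookkeeping that produces the shifted discrete-convolution structure; once $\square_k(fg)$ is correctly reduced to a convolution over the lattice, the remainder is just H\"older plus Young. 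A secondary technical point, handled by a standard Bernstein/multiplier argument, is the $k$-uniform $L^{p_0}$-boundedness of the $\square_k$; I would also first establish the estimate for $f,g\in\mathcal S(\mathbb R^d)$ and pass to the limit by density (part (4) of Lemma \ref{rl}) so that the pointwise product is meaningful.
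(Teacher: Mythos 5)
Your argument is correct, but note that the paper contains no proof of this proposition at all: it is quoted directly from Toft \cite[Theorem 2.4]{toft2004continuity}, so the comparison is with Toft's proof rather than with anything in the text. Toft argues on the STFT side: choosing a window of product form $g_1g_2$, one has the exact identity
\begin{equation*}
V_{g_1 g_2}(fg)(x,w)=\bigl(V_{g_1}f(x,\cdot)\ast V_{g_2}g(x,\cdot)\bigr)(w),
\end{equation*}
after which H\"older's inequality in $x$ (via $\tfrac{1}{p_1}+\tfrac{1}{p_2}=\tfrac{1}{p_0}$), Young's inequality in $w$ (via $\tfrac{1}{q_1}+\tfrac{1}{q_2}=1+\tfrac{1}{q_0}$), and the submultiplicativity $(1+|w|)^{s}\lesssim(1+|w_1|)^{s}(1+|w-w_1|)^{s}$ for $s\ge 0$ give the claim in one stroke, with no support bookkeeping. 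Your proof is the discrete counterpart of this (the standard argument in the frequency-uniform-decomposition literature): the pieces $\square_k$ replace the STFT, your support computation $|k-m-n|_\infty\le 3$ replaces the exact convolution identity, and the two exponent relations enter in precisely the same roles, H\"older in the space variable and Young on the lattice $\Z^d$, with $s\ge 0$ again used only through weight submultiplicativity. What your route buys is self-containedness relative to this paper, since it runs entirely on the machinery already set up in \eqref{edfw} and \eqref{aor}; the continuous route is shorter because its convolution structure is exact rather than approximate, and it handles all exponents without case distinctions. One point to repair in your write-up: the density step via part (4) of Lemma \ref{rl} covers only $p,q<\infty$, and you need endpoint exponents for the very application you make ($p_1=\infty$, $q_1=1$ in the module property). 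Since your estimates are absolute, the standard fix is to define $fg$ for general $f,g$ as the limit in $\mathcal{S}'(\R^d)$ of the partial sums $\sum_{|m|_\infty\le R}\square_m f\cdot\sum_{|n|_\infty\le R}\square_n g$, which your bounds show converge in $M_s^{p_0,q_0}(\R^d)$; with that replacement the proof is complete.
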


\subsection{Modulation space estimates for unimodular Fourier multipliers}
In this section, we consider the boundedness properties of a class of  unimodular Fourier multipliers defined by  
$$ U(t) f(x)=e^{it\phi (h(D))}f(x)=  \int_{\mathbb R^d}  e^{i \pi t \phi \circ h(\xi)}\, \hat{f}(\xi) \, e^{2\pi i \xi \cdot x} \, d\xi$$ for $f\in \mathcal{S}(\mathbb R^d)$, 
where  $\phi\circ h:\mathbb R^d \to \mathbb R$ is the composition function of $h:\mathbb R^d\to \mathbb R$ and $\phi:\mathbb R \to \mathbb R.$ 

\begin{proposition}\label{des}   Let $s\in \mathbb R$ and $1\leq p, q \leq \infty.$ 
\begin{enumerate}
\item[(i)] (\cite[Theorem 1.1]{deng2013estimate})  Assume that   there exist $m_1, m_2>0$ such that  $\phi$ satisfies

$$\begin{cases}
 \left| \phi^{(\mu)} (r)\right| \lesssim r^{m_1-\mu} \quad \textrm{if}\quad  r\geq 1\\
  \left| \phi^{(\mu)} (r)\right| \lesssim r^{m_2-\mu}  \quad  \textrm{if}\quad  0<r<1\end{cases}$$ 
for all  $\mu \in \mathbb N_0$  and $h\in C^{\infty}(\mathbb R^d \setminus \{ 0\})$ is positive homogeneous function  with degree $\lambda>0$. Then we have 
\[\left\| e^{it\phi (h (D))} f \right\|_{M^{p,q}_s} \lesssim  \|f\|_{M^{p,q}_s}  + |t|^{d \left| \frac{1}{2}- \frac{1}{p} \right|} \|f\|_{M^{p,q}_{s+ \gamma (m_1, \lambda)}}\]
where $\gamma (m_1, \lambda)=d (m_1\lambda -2) |1/2-1/p|$.

\item[(ii)] (\cite[Theorems 1 and 2]{chen2012asymptotic}) Let   $h(\xi)=|\xi|$ and $\phi (r)=r^{\alpha}$, with   $1/2 < \alpha \leq 2.$ Then 
\[ \|U(t)f \|_{M^{p,q}}\leq  (1+|t|)^{d\left| \frac{1}{p}-\frac{1}{2} \right|} \|f\|_{M^{p,q}} \]
 \item[(iii)]  (\cite[Proposition 4.1]{wang2007global}) Let  $ 2 \leq p \leq \infty, 1\leq q \leq \infty,$ $h(\xi)=|\xi|$ and $\phi (r)=r^{\alpha} \ (\alpha \geq 2).$  Then $$ \|U(t)f \|_{M^{p,q}}\leq  (1+|t|)^{- \frac{2d}{\alpha}\left( \frac{1}{2}-\frac{1}{p} \right)} \|f\|_{M^{p',q}}$$ 
\end{enumerate}
\end{proposition}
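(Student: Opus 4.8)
The plan is to treat all three parts by a single scheme: reduce to a single frequency block via the equivalent norm \eqref{edfw}, estimate the block, and reassemble. Since $U(t)=e^{it\phi(h(D))}$ and the decomposition operators $\square_k=\mathcal{F}^{-1}\sigma_k\mathcal{F}$ are Fourier multipliers, they commute, so $\square_k U(t)f=U(t)\square_k f$, whose frequency support sits in $\mathrm{supp}\,\sigma_k$; by the almost orthogonality \eqref{aor} it is then enough to control the $L^p\to L^p$ (respectively $L^{p'}\to L^p$) operator norm of $U(t)$ acting on functions with frequency support in the unit cube $Q_k$. On each block I would write $U(t)\square_k f=K_{k,t}\ast\square_k f$, where $K_{k,t}=\mathcal{F}^{-1}[e^{it\phi\circ h}\tilde\sigma_k]$ and $\tilde\sigma_k$ is a fattened cutoff equal to $1$ on $\mathrm{supp}\,\sigma_k$, so that Young's inequality gives $\|U(t)\square_k f\|_{L^p}\le\|K_{k,t}\|_{L^1}\|\square_k f\|_{L^p}$.

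The crux of part (i) is the kernel bound, which I would obtain by stationary phase on $Q_k$. After Taylor-expanding the phase about $k$, the constant term is a unimodular scalar and the linear term only translates $K_{k,t}$, so both are irrelevant for $\|K_{k,t}\|_{L^1}$, and the oscillation is carried by the Hessian of $\phi\circ h$. The chain rule, the hypotheses $|\phi^{(\mu)}(r)|\lesssim r^{m_1-\mu}$, and the homogeneity of $h$ of degree $\lambda$ show that all derivatives of order $\ge 2$ of $\phi\circ h$ on $Q_k$ are $\lesssim (1+|k|)^{m_1\lambda-2}$. Bounding $\|K_{k,t}\|_{L^1}\le\|\langle\cdot\rangle^N K_{k,t}\|_{L^2}\|\langle\cdot\rangle^{-N}\|_{L^2}$ with $N>d/2$, transferring $(1-\Delta_\xi)^N$ onto the symbol by Plancherel, and inserting the Hessian bound should yield $\|K_{k,t}\|_{L^1}\lesssim(1+|t|(1+|k|)^{m_1\lambda-2})^{d/2}$. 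Interpolating the resulting $L^1\to L^1$ and $L^\infty\to L^\infty$ bounds against the trivial $L^2\to L^2$ bound (of norm $1$, since $|e^{it\phi\circ h}|=1$) converts the exponent $d/2$ into $d|1/2-1/p|$; splitting $(1+|t|(1+|k|)^{m_1\lambda-2})^{d|1/2-1/p|}\lesssim 1+|t|^{d|1/2-1/p|}(1+|k|)^{\gamma(m_1,\lambda)}$ and summing in the weighted $\ell^q_s$ norm produces exactly the two terms in part (i).

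Part (ii) is the specialization $h(\xi)=|\xi|$, $\phi(r)=r^\alpha$ with $\alpha\le 2$, so that $m_1\lambda-2=\alpha-2\le 0$, the factor $(1+|k|)^{\alpha-2}\le 1$, the block estimate becomes uniform in $k$, and the loss of derivatives disappears, leaving the clean bound $(1+|t|)^{d|1/p-1/2|}$; the restriction $\alpha>1/2$ together with the $0<r<1$ growth condition is what controls the block at the origin, where $|\xi|$ is nonsmooth. Part (iii) is dispersive in character, and I would prove it by replacing the $L^1\to L^1$ bound with an $L^1\to L^\infty$ decay. For $\phi\circ h(\xi)=|\xi|^\alpha$ with $\alpha\ge 2$, the scaling $\xi\mapsto |t|^{-1/\alpha}\eta$ turns the free kernel into $|t|^{-d/\alpha}\int e^{i(|t|^{-1/\alpha}x\cdot\eta+|\eta|^\alpha)}\,d\eta$, so that $\|U(t)\|_{L^1\to L^\infty}\lesssim |t|^{-d/\alpha}$; interpolation with $L^2\to L^2$ gives the $L^{p'}\to L^p$ rate $|t|^{-(2d/\alpha)(1/2-1/p)}$, and the same rate holds uniformly on each block $Q_k$ (high blocks disperse faster, via the nondegenerate Hessian of size $(1+|k|)^{\alpha-2}$). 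Summing over $k$ with \eqref{aor} then delivers the $M^{p',q}\to M^{p,q}$ estimate for $2\le p\le\infty$.

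The hardest step in every case will be the uniform-in-$k$ kernel estimate: one must track the exact growth of the Hessian of $\phi\circ h$ across frequency blocks, check that the higher $\xi$-derivatives appearing in the Plancherel step do not overwhelm the Hessian contribution, and --- most delicately --- handle the low-frequency block, where $h$ may be nonsmooth (parts (i), (ii)) or the Hessian of $|\xi|^\alpha$ degenerates at the origin (part (iii)). In the dispersive case this degeneracy is precisely why the global rate $|t|^{-d/\alpha}$, coming from the homogeneity of $|\xi|^\alpha$ rather than from a nondegenerate stationary point, is the sharp one and must be used on the block containing the origin.
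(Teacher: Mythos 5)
The paper itself contains no proof of Proposition \ref{des}: all three parts are imported verbatim from \cite{deng2013estimate}, \cite{chen2012asymptotic} and \cite{wang2007global}, so the fair benchmark is the machinery the paper does develop for the analogous polynomial result, namely Proposition \ref{bi} and Lemma \ref{fl}. Your blueprint is exactly that scheme: commute $U(t)$ with $\square_k$, bound the block kernel $K_{k,t}=\mathcal{F}^{-1}[e^{it\phi\circ h}\tilde\sigma_k]$ in $L^1$ after stripping the constant and linear parts of the Taylor expansion (the paper's $\Lambda_k$ trick), interpolate against the trivial unimodular $L^2$ bound, reassemble via \eqref{aor}, and split $(1+|t|(1+|k|)^{m_1\lambda-2})^{d|1/2-1/p|}\lesssim 1+|t|^{d|1/2-1/p|}(1+|k|)^{\gamma(m_1,\lambda)}$ to produce the two terms in (i). Parts (ii) and (iii), including the trivial bound for small $t$ and the degenerate stationary point at the origin block for $|\xi|^\alpha$, likewise track the cited proofs.

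There is, however, one concrete defect, and it sits at the step you yourself call the crux. The bound $\|K_{k,t}\|_{L^1}\le\|\langle\cdot\rangle^{N}K_{k,t}\|_{L^2}\,\|\langle\cdot\rangle^{-N}\|_{L^2}$ forces $N>d/2$ \emph{strictly}, and transferring $(1-\Delta_\xi)^{N}$ onto the symbol then yields $\|K_{k,t}\|_{L^1}\lesssim\bigl(1+|t|(1+|k|)^{m_1\lambda-2}\bigr)^{N}$, not the power $d/2$ you assert. After interpolation this gives the operator norm $|t|^{2N|1/p-1/2|}$ with $2N>d$, i.e., a strictly larger time exponent than $d\left|1/2-1/p\right|$ and a strictly larger weight loss than $\gamma(m_1,\lambda)$; as written, your argument proves only a weakened version of (i) (and of the uniform constant in (ii)). The repair is the two-region argument that the paper reproduces in Lemma \ref{fl}: with $R\sim|t|(1+|k|)^{m_1\lambda-2}$ (the size of the derivatives of the reduced phase $t\Lambda_k$ on the unit cube), estimate $\int_{|x|\le R}|K_{k,t}|$ by Cauchy--Schwarz plus Plancherel, which contributes only the measure factor $R^{d/2}$, and estimate $\int_{|x|>R}$ by $L$-fold integration by parts with $L>d/2$, where $\|D_j^{L}(\sigma e^{it\Lambda_k})\|_{L^2}\lesssim R^{L}$ is cancelled against $\bigl(\int_{|x|>R}|x|^{-2L}dx\bigr)^{1/2}\sim R^{d/2-L}$. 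Only this splitting gives the exact per-block exponent $d/2$ that the stated $|t|^{d|1/2-1/p|}$ and $\gamma(m_1,\lambda)$ encode. (For the weakened exponents your Sobolev-type bound delivers, the applications in Theorems \ref{gidi} and \ref{dgt} would in fact survive, but the proposition as stated would not be proved.) The same caveat applies to the low-frequency blocks in (i)--(ii), where $h$ is nonsmooth and the $m_2$-condition enters: you correctly flag them, but they must be run through the identical splitting rather than the crude weighted Cauchy--Schwarz.
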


Another important class of unimodular Fourier multipliers that is not covered by Proposition~\ref{des}, are  the  so-called Fourier multiplier with polynomial symbol.
Specifically, for $f\in \mathcal{S}(\mathbb R^d)$ let
$$ U(t) f(x)=e^{itP(D)}f(x)=  \int_{\mathbb R^d}  e^{i \pi t  P(\xi)}\, \hat{f}(\xi) \, e^{2\pi i \xi \cdot x} \, d\xi,$$ 
where  $P(\xi)=\sum_{|\beta|\leq m} c_{\beta} \xi ^{\beta}$ is a polynomial with order $m\geq 1.$ In this setting the following result was proved in \cite{deng2013estimate}.

\begin{proposition} (\cite[Theorems 4.3]{deng2013estimate}) \label{drw} Let  $s\in \mathbb R,$  $1\leq p, q \leq \infty$  and $m\geq 2.$ Then
\begin{eqnarray*}
\|e^{itP (D)} f\|_{M^{p,q}} \lesssim  \|f\|_{M^{p,q}_s} +   |t|^{d \left| \frac{1}{2}- \frac{1}{p} \right|} \|f\|_{M^{p,q}_{s+ \gamma (m)}}
\end{eqnarray*} where 
$\gamma (m)=d (m-2) |1/2-1/p|.$
\end{proposition}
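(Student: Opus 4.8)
\emph{Overall strategy.} The plan is to transfer the estimate to the frequency-uniform decomposition, bound the $L^p\to L^p$ norm of each localized operator $\square_k e^{itP(D)}$ uniformly in $k$ and $t$, and then reassemble via the equivalent norm \eqref{edfw}. First I would record that, writing $K_k=\mathcal{F}^{-1}[\sigma_k\,e^{i\pi t P}]$ and $T_kg=K_k*g$, the almost-orthogonality relation \eqref{aor} (which forces $\sum_{\|\ell\|_\infty\le1}\sigma_{k+\ell}\equiv1$ on $\text{supp}\,\sigma_k$, hence $\sigma_k\sum_{\|\ell\|_\infty\le1}\sigma_{k+\ell}=\sigma_k$) yields
\begin{equation}\label{red}
\square_k e^{itP(D)}f=T_k\Big(\sum_{\|\ell\|_\infty\le1}\square_{k+\ell}f\Big).
\end{equation}
Since $\partial^\beta P$ has degree $m-|\beta|$, on $\text{supp}\,\sigma_k$ (where $|\xi|\lesssim 1+|k|$) one has $|\partial^\beta P|\lesssim(1+|k|)^{m-2}$ for every $|\beta|\ge2$, so the effective curvature of the phase on the $k$-th block is governed by the single parameter $\mu_k:=|t|(1+|k|)^{m-2}$; this is precisely the polynomial analogue of the homogeneous situation in Proposition \ref{des}(i), with $m_1\lambda$ replaced by $m$. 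The goal of the local analysis is the uniform bound
\begin{equation}\label{opnorm}
\|T_k\|_{L^p\to L^p}\lesssim (1+\mu_k)^{d\,|1/2-1/p|}.
\end{equation}

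\emph{Reduction of \eqref{opnorm} to a kernel estimate.} Since $\sigma_k e^{i\pi tP}$ is a bounded multiplier, Plancherel gives $\|T_k\|_{L^2\to L^2}=\|\sigma_k e^{i\pi tP}\|_{L^\infty}\lesssim1$, uniformly in $k,t$, while Young's inequality gives $\|T_k\|_{L^1\to L^1}=\|T_k\|_{L^\infty\to L^\infty}\le\|K_k\|_{L^1}$. Thus \eqref{opnorm} follows by Riesz--Thorin interpolation between $L^2$ and $L^1$ (for $1\le p\le2$) and between $L^2$ and $L^\infty$ (for $2\le p\le\infty$), once I establish the kernel bound
\begin{equation}\label{kernel}
\|K_k\|_{L^1}\lesssim (1+\mu_k)^{d/2},
\end{equation}
the interpolation exponent working out to exactly $d|1/2-1/p|$.

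\emph{The kernel estimate \eqref{kernel} is the crux.} After the translation $\xi=k+\eta$, a Taylor expansion of $P$ at $k$ writes the phase as $P(k)+\nabla P(k)\cdot\eta+Q_k(\eta)$, where $Q_k$ collects the terms of order $\ge2$. The constant $P(k)$ contributes a unimodular factor and the linear term a translation in $x$; neither affects $\|K_k\|_{L^1}$. Hence \eqref{kernel} reduces to
$$\Big\|\mathcal{F}^{-1}\big[\tilde\sigma(\eta)\,e^{i\pi t Q_k(\eta)}\big]\Big\|_{L^1}\lesssim (1+\mu_k)^{d/2},$$
where $\tilde\sigma$ is a fixed bump supported in $\{|\eta|_\infty\le1\}$ and the phase $\psi:=\pi tQ_k$ satisfies $\psi(0)=0$, $\nabla\psi(0)=0$, and $\|\partial^\alpha\psi\|_{L^\infty}\lesssim\mu_k$ for all $|\alpha|\ge2$. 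This is a self-contained oscillatory-integral lemma, which I would prove by stationary phase: split $\R^d$ into the region $|x|\lesssim\mu_k$, where the second-derivative (stationary phase) test contributes $L^1$-mass at most $\mu_k^{d/2}$, and the region $|x|\gtrsim\mu_k$, where $|2\pi x+\nabla\psi(\eta)|\gtrsim|x|$ (using $|\nabla\psi|\lesssim\mu_k$ on the support) allows repeated non-stationary integration by parts and gives integrable decay $|x|^{-N}$ with $N>d$. The delicate point, and the main obstacle, is that the Hessian of $P$ at $k$ may be \emph{degenerate} for some $k$, so the naive pointwise amplitude bound $\mu_k^{-d/2}$ can fail; one must verify that the $L^1$ total still comes out as $\mu_k^{d/2}$. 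This survives the degeneracy because the degenerate locus is lower-dimensional and the relevant one-variable van der Corput estimates integrate up to the same power (the model $\psi=\mu_k\eta^{2j}$ again yields exactly $\mu_k^{1/2}$ per coordinate).

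\emph{Reassembly.} Finally I would combine \eqref{red}, \eqref{opnorm}, and the elementary inequality $(1+\mu_k)^{d|1/2-1/p|}\lesssim 1+|t|^{d|1/2-1/p|}(1+|k|)^{\gamma(m)}$ (recall $\gamma(m)=d(m-2)|1/2-1/p|$) to get, for each $k$,
$$\|\square_k e^{itP(D)}f\|_{L^p}\lesssim \big(1+|t|^{d|1/2-1/p|}(1+|k|)^{\gamma(m)}\big)\sum_{\|\ell\|_\infty\le1}\|\square_{k+\ell}f\|_{L^p}.$$
Taking the weighted $\ell^q$ norm in $k$, using \eqref{edfw}, the triangle inequality in $\ell^q$, and $1+|k+\ell|\asymp1+|k|$ for $\|\ell\|_\infty\le1$, produces
$$\|e^{itP(D)}f\|_{M^{p,q}_s}\lesssim\|f\|_{M^{p,q}_s}+|t|^{d|1/2-1/p|}\|f\|_{M^{p,q}_{s+\gamma(m)}},$$
which contains the stated inequality upon taking $s\ge0$ and using the embedding $M^{p,q}_s\hookrightarrow M^{p,q}$ from Lemma \ref{rl}(1). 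The modifications for $p$ or $q=\infty$ are the standard ones.
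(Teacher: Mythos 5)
Your proof follows the same architecture as the paper's: localize with the frequency-uniform decomposition and the almost-orthogonality relation \eqref{aor}, prove an $L^1$ bound for the kernel $\mathcal{F}^{-1}(\sigma_k e^{itP})$, upgrade it to $L^p$ by Riesz--Thorin (with duality or Young for $p>2$), and reassemble through the equivalent norm \eqref{edfw} --- this is exactly the scheme of Proposition \ref{bi} combined with Lemma \ref{fl}. In one respect you go further than the paper: you track the $k$-dependence through the parameter $\mu_k=|t|(1+|k|)^{m-2}$ and absorb it into the weight via $(1+\mu_k)^{d|1/2-1/p|}\lesssim 1+|t|^{d|1/2-1/p|}(1+|k|)^{\gamma(m)}$, which is what produces $\gamma(m)$ for general $m$; the paper only details the quadratic model $P(\xi,\eta)=|\xi|^2-|\eta|^2$ (where $m=2$, the derivative bounds $|D^{\gamma}\Lambda_k|\lesssim 1$ are uniform in $k$, and $\gamma(m)=0$) and defers the general case to the cited reference. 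Your reduction by Taylor expansion at $k$, discarding the constant and linear terms as a modulation and a translation, is verbatim the paper's first step, and your weighted conclusion with $M^{p,q}_s$ on the left is in fact the correct form of the statement.

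The one place you genuinely diverge is the near-region estimate $\{|x|\lesssim\mu_k\}$, and that is precisely where your argument has a gap. You invoke stationary phase, which requires control of the Hessian of $P$ at $k$, and your treatment of the degenerate case --- ``the degenerate locus is lower-dimensional'' plus a one-variable model $\psi=\mu_k\eta^{2j}$ --- is a heuristic, not a proof: for a general polynomial in several variables the Hessian can degenerate along whole directions (e.g.\ $P(\xi)=\xi_1^m$ has no curvature at all in $\xi_2,\dots,\xi_d$), can do so with cross terms, and nothing you say reduces such configurations to the one-dimensional model. The paper's Lemma \ref{fl} shows this entire difficulty is avoidable: on the near region one simply applies Cauchy--Schwarz in $x$ followed by Plancherel, giving $I_1\lesssim \mu_k^{d/2}\|g^{\vee}\|_{L^2}\lesssim\mu_k^{d/2}$ with \emph{no} curvature hypothesis whatsoever, since only the $L^2$ mass of the symbol enters; the far region is then handled, as you do, by non-stationary integration by parts using $|D^{\gamma}e^{it\Lambda_k}|\lesssim\mu_k^{|\gamma|}$ and choosing $L>d$ derivatives. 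If you replace your stationary-phase step by this $L^2$ argument, the degeneracy issue evaporates and your write-up becomes a complete and correct proof of the general case.
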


To make the paper self content, we outline the proof of Proposition~\ref{drw} in the particular case  when $P(\xi, \eta)= |\xi|^2- |\eta|^2,$ and note that the general case can be proved similarly. But first, we state a result that provides a criteria for the Fourier multiplier to be bounded on modulation spaces. In particular, it provides an application of the uniform decomposition operators given in~\eqref{wn}.

\begin{proposition} \label{bi} Let $\square_k$ be defined as in \eqref{wn} and $t\in \mathbb R.$ Suppose that there is an integer $M>0$ such that
\begin{eqnarray*}\label{sk}
\|\square_ke^{itP(D)}f\|_{L^1}\lesssim \begin{cases}  |t|^{b_1} \|f\|_{L^1} \quad \textrm{if}\quad |k|<M\\
  |t|^{b_2} \|f\|_{L^1} \quad \textrm{if}\quad   |k|\geq M\end{cases}
\end{eqnarray*}
where $b_1\geq b_2\geq 0$ for all $f\in L^1(\R^d)$. Then we have 
$$\|e^{it P(D)} f\|_{M^{p,q}_s} \lesssim \left(|t|^{2b_1 \left|\frac{1}{p}- \frac{1}{2} \right| } + |t|^{2b_2 \left|\frac{1}{p}- \frac{1}{2} \right| }  \right) \|f\|_{M^{p,q}_{s}}$$ whenever $f\in M^{p,q}_s(\R^d)$. 
\end{proposition}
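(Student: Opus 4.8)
The plan is to reduce everything to a single-block operator estimate and then reassemble using the discrete description \eqref{edfw} of the modulation norm together with the almost orthogonality \eqref{aor}. Write $S_k^t := \square_k e^{itP(D)}$, which is the Fourier multiplier with symbol $\sigma_k(\xi)\, e^{i\pi t P(\xi)}$. Since this symbol has modulus $|\sigma_k(\xi)|\leq 1$ uniformly in $k$ and $t$, Plancherel (the $L^2$-operator norm of a multiplier being the sup-norm of its symbol) gives the uniform endpoint bound $\|S_k^t g\|_{L^2}\lesssim \|g\|_{L^2}$. The hypothesis supplies the other endpoint $\|S_k^t g\|_{L^1}\lesssim |t|^{b_i}\|g\|_{L^1}$, where $b_i=b_1$ when $|k|<M$ and $b_i=b_2$ when $|k|\geq M$. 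To cover $p>2$ as well, I would note that the adjoint of $S_k^t$ is $\square_k e^{-itP(D)}=S_k^{-t}$ (the symbol $\sigma_k$ being real), so the $L^1$ hypothesis applied at $-t$ together with duality yields $\|S_k^t g\|_{L^\infty}\lesssim |t|^{b_i}\|g\|_{L^\infty}$. Interpolating ($L^1$--$L^2$ for $1\leq p\leq 2$, and $L^2$--$L^\infty$ for $2\leq p\leq\infty$) then gives, for every $1\leq p\leq\infty$, the block bound
\[
\|S_k^t g\|_{L^p}\lesssim |t|^{2 b_i\,|1/p-1/2|}\,\|g\|_{L^p},
\]
with $b_i$ chosen according to whether $|k|<M$ or $|k|\geq M$.

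Next I would exploit almost orthogonality. Because $\sigma_k(\xi)\sum_{\|\ell\|_\infty\leq 1}\sigma_{k+\ell}(\xi)=\sigma_k(\xi)$ by \eqref{aor}, the symbols of $S_k^t$ and of $S_k^t\sum_{\|\ell\|_\infty\leq 1}\square_{k+\ell}$ coincide, whence $S_k^t f=\sum_{\|\ell\|_\infty\leq 1}S_k^t\square_{k+\ell}f$. Applying the block bound with $g=\square_{k+\ell}f$ and summing the finitely many ($\leq 3^d$) values of $\ell$ gives
\[
\|S_k^t f\|_{L^p}\lesssim |t|^{2b_i|1/p-1/2|}\sum_{\|\ell\|_\infty\leq 1}\|\square_{k+\ell}f\|_{L^p}.
\]
Since $b_i\in\{b_1,b_2\}$ and the $b_i$ are nonnegative, I can bound $|t|^{2b_i|1/p-1/2|}$ by $C(t):=|t|^{2b_1|1/p-1/2|}+|t|^{2b_2|1/p-1/2|}$ uniformly in $k$, which is exactly the prefactor in the conclusion.

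Finally I would reassemble the norm. Using \eqref{edfw}, $\|e^{itP(D)}f\|_{M^{p,q}_s}\asymp \big(\sum_k \|S_k^t f\|_{L^p}^q (1+|k|)^{sq}\big)^{1/q}$, and inserting the previous estimate produces a double sum over $k$ and $\ell$. Because $\|\ell\|_\infty\leq 1$ forces $(1+|k|)\asymp(1+|k+\ell|)$, Minkowski's inequality in $\ell^q$ followed by the reindexing $k\mapsto k+\ell$ collapses each of the finitely many $\ell$-terms back to $\|f\|_{M^{p,q}_s}$, yielding $\|e^{itP(D)}f\|_{M^{p,q}_s}\lesssim C(t)\|f\|_{M^{p,q}_s}$, as claimed; the cases $p=\infty$ or $q=\infty$ follow by the usual replacement of sums by suprema, and the density of $\mathcal{S}(\R^d)$ (Lemma \ref{rl}(4)) justifies the symbol manipulations. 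The only genuinely delicate point is the range $p>2$: the hypothesis is merely an $L^1$ bound, so the $L^\infty$ endpoint must first be recovered through the duality/adjoint argument above, and for this one must use that $\sigma_k$ is real-valued so that the adjoint of $S_k^t$ is again of the same form with $t$ replaced by $-t$.
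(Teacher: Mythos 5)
Your proof is correct and follows essentially the same route as the paper's: the $L^2$ endpoint via Plancherel, Riesz--Thorin interpolation against the $L^1$ hypothesis, a duality argument to reach $p>2$, the almost-orthogonality identity \eqref{aor}, and reassembly through the discrete norm \eqref{edfw}. The only differences are cosmetic refinements: you make explicit the adjoint computation (using that $\sigma_k$ is real, so the adjoint of $\square_k e^{itP(D)}$ is $\square_k e^{-itP(D)}$) which the paper merely invokes as ``a duality argument,'' and by dominating every block prefactor by $|t|^{2b_1\left|\frac{1}{p}-\frac{1}{2}\right|}+|t|^{2b_2\left|\frac{1}{p}-\frac{1}{2}\right|}$ uniformly in $k$ you avoid the paper's splitting of the final $k$-sum at $|k|=M+1$ (and incidentally sidestep the boundary cases where $|k|<M$ but $|k+\ell|\geq M$).
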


\begin{proof}  By \eqref{wn} and 
Plancherel theorem, we obtain
\begin{equation*}\label{ap}
\|\square_ke^{itP(D)}f\|_{L^2}=\|\sigma_ke^{itP}\hat{f}\|_{L^2} \lesssim \|f\|_{L^{2}}
\end{equation*}
for all $k\in \mathbb Z^{d}.$  By the  Riesz-Thorin interpolation theorem, and for any   $1\leq p\leq 2,$  we have 

$$ \|\square_ke^{itD}f\|_{L^{p}}\lesssim 
\begin{cases}
|t|^{2b_1\left| \frac{1}{p}-\frac{1}{2}\right|}\|f\|_{L^{p}} \,\,  {\textrm if}\,\,   |k|< M\\
 |t|^{2b_2 \left| \frac{1}{p}-\frac{1}{2}\right|} \|f\|_{L^{p}}\,\,  {\textrm if}\, \,  |k|\geq M
\end{cases}$$
 Using a duality argument, we obtain the above two inequality for all $1\leq p \leq \infty.$  Using \eqref{aor}, for $f\in \mathcal{S}(\mathbb R^{d})$ and $|k|<M,$ we obtain
\begin{eqnarray*}
\|\square_ke^{itP(D)}f\|_{L^{p}}  & \leq & \sum_{\|\ell\|_{\infty}\leq 1} \|\pi_{k+\ell} e^{itP(D)}\square_kf\|_{L^{p}} \lesssim   |t|^{2b_1 \left| \frac{1}{p}-\frac{1}{2}\right|} \|\square_kf\|_{L^{p}}. 
\end{eqnarray*}
Similarly, for $f\in \mathcal{S}(\mathbb R^{d})$ and $|k|\geq M,$ we obtain
\begin{eqnarray*}
\|\square_ke^{itP(D)}f\|_{L^{p}} 
& \lesssim  & |t|^{2b_2 \left| \frac{1}{p}-\frac{1}{2}\right|} \|\square_k f\|_{L^{p}}. 
\end{eqnarray*}
In view of \eqref{edfw}, and the above two inequalities,  we obtain
\begin{eqnarray*}
\|e^{itP(D)}f\|_{M^{p,q}_s} 
& = & \left\{ \sum_{|k|\leq M+1} (1+|k|)^{sq}\| \square_k(e^{itP(D)}f)\|^q_{L^p}\right\}^{1/q}\\
&&+ \left\{ \sum_{|k|> M+1} (1+|k|)^{sq}\| \square_k(e^{itP(D)}f)\|^q_{L^p}\right\}^{1/q}\\
& \lesssim &  |t|^{2b_1 \left| \frac{1}{p}-\frac{1}{2}\right|}\left\{ \sum_{|k|\leq M+1} (1+|k|)^{sq}\| \square_kf\|^q_{L^p}\right\}^{1/q}\\
&& +|t|^{2b_2 \left| \frac{1}{p}-\frac{1}{2}\right|}\left\{ \sum_{|k|> M+1} (1+|k|)^{sq}\| \square_kf\|^q_{L^p}\right\}^{1/q}\\
& \lesssim &  \left( |t|^{2b_1 \left| \frac{1}{p}-\frac{1}{2}\right|} +|t|^{2b_2 \left| \frac{1}{p}-\frac{1}{2}\right|}\right) \|f\|_{M^{p,q}_s}. 
\end{eqnarray*}
This completes the proof.\end{proof}
Now to apply Proposition \ref{bi}, we must have control on the $L^1-$norm of the projection operator
$\|\square_k (e^{itP(D)}f)\|_{L^1(\mathbb R^{d})}. $ 
Since $ \square_k (e^{it P(D)}f)=\mathcal{F}^{-1}(\sigma_k e^{itP})\ast f, $ in view of Young's inequality, it suffices to control the norm $ \| \mathcal{F}^{-1} (\sigma_k e^{itP})\|_{L^{1}}$, which we shall do in next two lemmas.

\begin{Lemma}(\cite[Lemmas 4.1 and 4.2]{deng2013estimate})  \label{fl}  
Let $t\in \mathbb R, P(z)=P(\xi, \eta)=|\xi|^2-|\eta|^2, z=(\xi, \eta)\in \mathbb R^{2d}, k \in \mathbb Z^{2d},$ and $M>0.$   Then  we have 
  $$\|\mathcal{F}^{-1} (\sigma_{k} e^{it P(D)})\|_{L^1(\mathbb R^{2d})} \lesssim \max \{ |t|^d, 1 \}.$$
\end{Lemma}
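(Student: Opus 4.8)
\emph{Proof proposal.} The plan is to reduce the estimate to the single cube centered at the origin (which is what makes the bound uniform in $k$) and then to exploit the fact that $e^{itP(\cdot)}$ is the exponential of a nondegenerate quadratic form, whose inverse Fourier transform is an explicit dispersive chirp.

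First I would record that, by the $\Z^{2d}$-periodicity of $\sum_l\rho_l$, one has $\sigma_k(\zeta)=\sigma_0(\zeta-k)$ for every $k\in\Z^{2d}$. Writing $n=2d$ and substituting $\zeta=k+u$, and using that $P$ is quadratic, we get the exact splitting $P(k+u)=P(k)+\ell_k(u)+P(u)$, where $\ell_k$ is \emph{linear} in $u$ with coefficients proportional to $tk$. Hence
\[
\mathcal F^{-1}(\sigma_k e^{itP})(x)=e^{2\pi i k\cdot x}\,e^{itP(k)}\,\mathcal F^{-1}(\sigma_0 e^{itP})(x+w_k),
\]
where $w_k$ is the translation produced by the linear phase $\ell_k$. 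Since translation and multiplication by the unimodular factors leave the $L^1$-norm invariant, $\|\mathcal F^{-1}(\sigma_k e^{itP})\|_{L^1}=\|\mathcal F^{-1}(\sigma_0 e^{itP})\|_{L^1}$, so it suffices to treat $k=0$. Set $K:=\mathcal F^{-1}(\sigma_0 e^{itP})$.

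Next I split on the size of $t$. For $|t|\le1$ the symbol $\sigma_0 e^{itP}$ is smooth, supported in a fixed cube, with all derivatives bounded uniformly in $|t|\le1$; the standard bound $\|\mathcal F^{-1}g\|_{L^1}\lesssim\sum_{|\beta|\le N}\|\partial^\beta g\|_{L^1}$ with $N>n$ (obtained by inserting the weight $(1+|x|)^{-N}$ and moving powers of $x$ onto derivatives of $g$) gives $\|K\|_{L^1}\lesssim1=\max\{|t|^d,1\}$. The heart of the matter is $|t|\ge1$, where I must produce the sharp growth $\|K\|_{L^1}\lesssim|t|^{n/2}=|t|^{d}$. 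Because $P$ is a nondegenerate quadratic form of signature $(d,d)$ with $|\det|=1$, the Fresnel (Gaussian) Fourier transform formula gives, for $t\neq0$, $\mathcal F^{-1}(e^{itP})=c\,|t|^{-n/2}e^{ic'P(\cdot)/t}$ as a tempered distribution, i.e. a chirp of constant modulus $c|t|^{-n/2}$. Writing $K=\check\sigma_0*\mathcal F^{-1}(e^{itP})$ with $\check\sigma_0=\mathcal F^{-1}\sigma_0\in\mathcal S(\R^{n})$, expanding $P(x-y)=P(x)-2B(x,y)+P(y)$ with $B$ the polar bilinear form, and factoring out the $x$-chirp, the remaining integral is the Fourier transform of the Schwartz function $y\mapsto\check\sigma_0(y)e^{ic'P(y)/t}$ evaluated at a point proportional to $x/t$. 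For $|t|\ge1$ this function has Schwartz seminorms bounded uniformly in $t$, so I obtain the pointwise dispersive bound
\[
|K(x)|\lesssim_L|t|^{-n/2}\bigl(1+|x|/|t|\bigr)^{-L}\qquad(|t|\ge1),
\]
valid for every $L$.

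Finally I integrate: choosing $L>n$ and changing variables $y=x/t$,
\[
\|K\|_{L^1}\lesssim|t|^{-n/2}\int_{\R^{n}}\bigl(1+|x|/|t|\bigr)^{-L}\,dx=|t|^{-n/2}\,|t|^{n}\int_{\R^{n}}(1+|y|)^{-L}\,dy\lesssim|t|^{n/2}=|t|^{d},
\]
which together with the case $|t|\le1$ yields $\|\mathcal F^{-1}(\sigma_k e^{itP})\|_{L^1}\lesssim\max\{|t|^d,1\}$. I expect the main obstacle to be the third step: getting the \emph{sharp} exponent $d=n/2$ rather than the trivial $n$. This is precisely where the nondegeneracy of $P$ enters, through the dispersive factor $|t|^{-n/2}$, to offset the volume $\sim|t|^{n}$ of the region $\{|x|\lesssim|t|\}$ on which $K$ is effectively concentrated; the hyperbolic (indefinite) signature of $P$ causes no trouble since all that is used is $|\det P|=1$.
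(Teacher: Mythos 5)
Your proof is correct, but it takes a genuinely different route from the paper's at the key step. The reduction to $k=0$ is in substance the same: the paper sets $\Lambda_k(z)=P(z+k)-P(k)-\nabla P(k)\cdot z$ and uses translation/modulation invariance of the $L^1$-norm, and since $P$ is a homogeneous quadratic your exact splitting $P(k+u)=P(k)+\ell_k(u)+P(u)$ is precisely the statement $\Lambda_k=P$ (the paper keeps the Taylor-remainder formulation because it wants estimates valid for general polynomial symbols, where $\Lambda_k$ is only controlled, not explicit). The divergence is in how the $k=0$ kernel is estimated. You use the Fresnel formula $\mathcal F^{-1}(e^{itP})=c\,|t|^{-d}e^{ic'P(\cdot)/t}$, factor out the $x$-chirp, and observe that $y\mapsto\check\sigma_0(y)e^{ic'P(y)/t}$ has Schwartz seminorms uniform in $|t|\ge1$, obtaining the pointwise dispersive bound $|K(x)|\lesssim_L |t|^{-d}(1+|x|/|t|)^{-L}$ and integrating. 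The paper instead splits the $L^1$-norm at $|x|=t$: on $\{|x|\le t\}$ Cauchy--Schwarz plus Plancherel gives the factor $t^{d}$ (square root of the volume $t^{2d}$ times $\|g^{\vee}\|_{L^2}\lesssim1$), while on $\{|x|>t\}$ it integrates by parts $L>2d$ times using the claim $\sum_j\|D_j^L(\sigma e^{it\Lambda_k})\|_{L^2}\lesssim t^L$, yielding $t^{d-L}\cdot t^L=t^d$ again. What each approach buys: yours gives a strictly stronger conclusion (a sharp pointwise kernel bound, not just the $L^1$ estimate) with a transparent dispersive mechanism, but it is tied to the nondegenerate quadratic structure and does not extend to degenerate quadratics or higher-degree $P$, where no chirp formula exists; the paper's argument uses nothing about $P$ beyond $|D^\gamma\Lambda_k(z)|\lesssim1$ on the unit cube uniformly in $k$, so it transfers verbatim to arbitrary degree-$2$ polynomials and, with modified exponents, to degree $m$ --- which is exactly how the general Proposition on $e^{itP(D)}$ is proved. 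One correction to your closing remark: the sharp exponent $d=n/2$ does \emph{not} actually hinge on nondegeneracy. In the paper it comes from Plancherel (the kernel has $L^2$-norm $O(1)$ for \emph{any} real phase, since the symbol is unimodular times a fixed cutoff) combined with the effective support $|x|\lesssim t$ forced by the $O(t)$ phase gradient; even $P\equiv0$ satisfies the stated bound trivially. Nondegeneracy is only where \emph{your} mechanism --- dispersive decay offsetting the volume $|t|^{2d}$ --- happens to produce it.
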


\begin{proof} Assume that $|k|> M+1.$
We introduce an auxiliary function defined by
$$\Lambda_k(z)= P (z+k)-P(k)-\nabla P(k)\cdot z$$    for all $ k \in \mathbb Z^{2d}.$ Since $L^1-$norm is invariant under translation and modulation, we have 
\begin{eqnarray*}
\|\mathcal{F}^{-1} (\sigma_k(z)e^{itP(z)})\|_{L^{1}(\mathbb R^{2d})}  & = & \|\mathcal{F}^{-1} (\sigma(z)e^{itP(z+k)})\|_{L^{1}(\mathbb R^{2d})}\\
& = &  \|\mathcal{F}^{-1} (\sigma(z)e^{it (\Lambda_k(z) +P(k)+\nabla P(k)\cdot z)})\|_{L^{1}(\mathbb R^{2d})}\\
& = & \|g^{\vee}(x+\nabla P (k))\|_{L_x^1(\mathbb R^{2d})}\\
& = &  \|g^{\vee}\|_{L^{1}(\mathbb R^{2d})},
\end{eqnarray*}
where $g(z)=\sigma (z)e^{it\Lambda_k(z)}.$
Thus, to prove Lemma \ref{fl},  it suffices to prove 
$$ \|g^{\vee}\|_{L^1} \lesssim \max \{ t^{d}, 1 \}$$
for $t>0.$
We consider
\begin{eqnarray}\label{ol}
\|g^{\vee} \|_{L^1(\mathbb R^{2d})} & = & \int_{|x|\leq t} \left| \int_{\mathbb R^{2d} }\sigma (z) e^{it \Lambda_k(z)} e^{ixz} dz \right|dx  + \int_{|x|>t} \left| \int_{\mathbb R^{2d} }\sigma (z) e^{it \Lambda_k(z)} e^{ixz} dz \right|dx \nonumber \\
& : = & I_1 + I_2.
\end{eqnarray}
By Cauchy-Schwarz inequality and  Plancherel's Theorem, we have 
\begin{eqnarray}\label{p1}
I_1 & \lesssim & \left( \int_{|x|\leq t} 1 dx\right)^{1/2} \left(\int_{|x|\leq t} \left| \int_{\mathbb R^{2d} }\sigma (z) e^{it \Lambda_k(z)} e^{ixz} dz \right|^{2}dx\right)^{1/2}\nonumber \\
& \lesssim & t^{d}\|g^{\vee}\|_{L^2(\mathbb R^{2d})}\lesssim t^{d}.
\end{eqnarray}
Now we concentrate on $I_2.$  For $j \in \{1,2, \cdots, 2d\},$ let 
$$E_t=\{x\in \mathbb R^{2d}: |x|>t\},$$
$$E_{j,t}=\{x\in E_t: |x_j|\geq |x_l| \ \text{for all} \    l\neq j \}.$$
We note that
\begin{eqnarray*}
I_2  & \lesssim  &  \sum_{j=1}^{2d} \int_{E_j,t}  \left| \int_{\mathbb R^{2d} }\sigma (z) e^{it \Lambda_k(z)} e^{ixz} dz \right| dx :=\sum_{j=1}^{2d}I_{2j} .
\end{eqnarray*}
Since  $\sigma$ is compactly supported and $\Lambda_k$ is a smooth function, performing integration by parts and using Plancherel's theorem, we obtain that  for each $j\in \{1,2,..., 2d\}$ 
\begin{eqnarray*}
I_{2j}  & \lesssim & \int_{E_{j,t}}  \frac{1}{|x_j|^L}  \left| \int_{\mathbb R^{2d} } e^{ixz} D_{j}^{L}(\sigma(z) e^{it \Lambda_k(z)})dz \right| dx\\
& \lesssim &   \left(\int_{E_{j,t}}  \frac{1}{|x_j|^{2L}} dx\right)^{1/2}  \|D_j^{L}(\sigma e^{it\Lambda_k(z)})\|_{L^2(\mathbb R^{2d})}\\
& \lesssim &  \left( \int_{|x|>t} \frac{1}{|x|^{2L}} dx \right)^{1/2} \|D_j^{L}(\sigma e^{it\Lambda_k(z)})\|_{L^2(\mathbb R^{2d})}\\
& = & t^{-d} t^{-L} \|D_j^{L}(\sigma e^{it\Lambda_k(z)})\|_{L^2(\mathbb R^{2d})}
\end{eqnarray*}
where we choose $L>d$ as an integer.  Where we have used the fact that since $|x|^2= \sum_{j=1}^{2d}x_j^2\leq 2d|x_j|^2$ for $x\in E_{j,t}, $ we have $|x_j|^{-2L}\lesssim |x|^{-2L}$.  
Consequently,  we have 
\begin{eqnarray}\label{p2}
I_2 \lesssim t^{d} t^{-L} \sum_{j=1}^{2d} \|D_j^{L}(\sigma e^{it\Lambda_k(z)})\|_{L^2(\mathbb R^{2d})}.
\end{eqnarray}
Next, we claim that $$\sum_{j=1}^{2d} \|D_j^{L}(\sigma e^{it\Lambda_k(z)})\|_{L^2(\mathbb R^{2d})}\lesssim t^{L}.$$  Once this  claim is established, the proof of the lemma will follow from  \eqref{ol}, \eqref{p1}, and  \eqref{p2}.

We now give a proof of this  claim. To this end, 
we note that by Taylor's and Leibniz formula, we have 
\begin{eqnarray}\label{at}
\Lambda_k(z)= 2 \sum_{|\beta|=2} \frac{z^{\beta}}{\beta !} \cdot \int_0^1 (1-s)D^{\beta} P(k+sz) ds,
\end{eqnarray}
and
\begin{eqnarray}\label{al}
D^{\gamma} \Lambda_k(z)= \sum_{\gamma_1+\gamma_2=\gamma}  \sum_{|\beta|=2} C_{\beta, \gamma_1, \gamma_2} D^{\gamma_1} z^{\beta}\cdot \int_0^1 (1-s)D^{\beta +\gamma_2} P(k+sz)ds.
\end{eqnarray} 
Since $P(z)$ is a polynomial of order 2, there exists $C_{\gamma}$  such that  
\begin{eqnarray}\label{io}
|D^{\gamma}P(z)| \leq C_{\gamma} |z|^{2-|\gamma|}
\end{eqnarray}
for all $\gamma \in \mathbb N_0^{2d}.$ We note that for $z\in \text{supp}  \ \sigma,$ and $ s\in [0,1],$ we have $|k+sz| \lesssim |k|,$ 
and in view of \eqref{at}-\eqref{io}, we have that for all $|k|>M+1$
$$|D^{\gamma} \Lambda_k(z)| \lesssim C_{\beta, \gamma}\sum_{\gamma_1+\gamma_2=\gamma}  \sum_{|\beta|=2} | D^{\gamma_1} z^{\beta}| \int_0^1 |k|^{-|\gamma_2|}ds \lesssim 1, \  \ \gamma \in \mathbb N_0^{2d},$$ 
which implies that 
\begin{eqnarray}\label{hw}
|D^{\gamma} e^{it \Lambda_k(z)}| & = & \left |  \sum_{l=1}^{|\gamma|} \sum_{|v_l|=|\gamma|} C_{v} t^{l} \Lambda_1^{(v_1)}\cdots \Lambda_k^{(v_l)} \right|\nonumber \\
& \lesssim & \sum_{l=1}^{|\gamma|} t^{l} \lesssim t^{|\gamma|}
\end{eqnarray}
for all $\gamma \in \mathbb N^{2d},$  where for each $l \in \{1, \cdots, |\gamma|\},$ $v_l=(v_1,\cdots, v_l) \in \mathbb N^l.$ 
For fixed $j,$ by Leibniz formula, we have 
\[D_j^{L}(\sigma e^{it\Lambda_k(z)})=\sum_{n=0}^{L}D_j^n (e^{it\Lambda_k(z)}) D_j^{L-n}(\sigma (z)).\]
Using this and \eqref{hw}, we obtain
\begin{eqnarray*}
\sum_{j=1}^{2d} \|D_j^{L}(\sigma e^{it\Lambda_k(z)})\|_{L^2(\mathbb R^{2d})} & \lesssim & \sum_{j=1}^{2d}\sum_{n=0}^{L} t^n \|D_{j}^{L-n}\sigma\|_{L^{2}(\mathbb R^{2d})}\\
 & \lesssim & \sum_{n=0}^L t^n \left( \sum_{j=1}^{2d} \|D_j^{L-n}\sigma \|_{L^2 (\mathbb R^{2d})}\right) \lesssim t^{L}.
\end{eqnarray*}
 This proves the claim when  $|k|>M+1.$ The case $|k|\leq M+1$ can be consider similarly (see e.g. \cite[Lemma 4.2]{deng2013estimate}).
\end{proof}

\begin{proof}[ Sketch Proof of Proposition~\ref{drw}] 
Taking Proposition \ref{bi} and Lemma \ref{fl} into account, the proof follows when $P(\xi, \eta)=|\xi|^2-|\eta|^2, (\xi, \eta)\in \mathbb R^{2d}$. The general case can be done similarly.
\end{proof}

\section{Trilinear $M^{p,q}$ estimates}\label{te}
One of the main technical results needed to prove our main result is establishing a trilinear estimate for the following Hartree type  trilinear operator. For  $0<\gamma <d$, let 
\begin{eqnarray*}
H_{\gamma}(f, g, h):= \left( |\cdot|^{-\gamma} \ast (f \bar{g}) \right) h  
\end{eqnarray*}
where $f, g, h \in \mathcal{S}(\mathbb R^d).$

\begin{proposition} \label{t1}  Let $0<\gamma <d,$  $1\leq p \leq 2,$ and $ 1\leq q \leq \frac{2d}{d+\gamma}$. 
Given   $f,g, h \in M^{p, q} (\mathbb R^d),$   then  $H_{\gamma}(f, g, h) \in M^{p, q} (\R^d)$, and the following estimate  holds $$\|H_{\gamma}(f, g, h)\|_{M^{p, q}} \lesssim \|f\|_{M^{p, q}}  \|g\|_{M^{p, q}}  \|h\|_{M^{p, q}}.$$
\end{proposition}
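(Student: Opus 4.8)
The plan is to peel off the Riesz‑potential factor using the $\mathcal{F}L^1$‑module structure of $M^{p,q}$ and then reduce everything to a single weighted series that is governed by the Hardy–Littlewood–Sobolev (HLS) inequality. First I would invoke the pointwise module estimate of Proposition~\ref{gap},
\[
\|H_{\gamma}(f,g,h)\|_{M^{p,q}}=\Big\|\big(|\cdot|^{-\gamma}\ast(f\bar g)\big)h\Big\|_{M^{p,q}}\lesssim \big\||\cdot|^{-\gamma}\ast(f\bar g)\big\|_{\mathcal{F}L^{1}}\,\|h\|_{M^{p,q}},
\]
which disposes of $h$ and leaves only the convolution factor. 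Since $1\le p\le 2$, Lemma~\ref{rl}(1) gives $M^{p,q}\hookrightarrow M^{2,q}$, and by Lemma~\ref{rl}(7) conjugation costs nothing, so it suffices to prove $\||\cdot|^{-\gamma}\ast(f\bar g)\|_{\mathcal{F}L^{1}}\lesssim \|f\|_{M^{2,q}}\|g\|_{M^{2,q}}$. Writing $F=f\bar g$ and using $\widehat{|\cdot|^{-\gamma}}=c_{d,\gamma}|\xi|^{\gamma-d}$, the quantity to estimate is the weighted integral $\int_{\mathbb R^d}|\xi|^{\gamma-d}|\widehat F(\xi)|\,d\xi$, which I would split at $|\xi|=1$.

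For the low‑frequency region I would exploit that $q\le \tfrac{2d}{d+\gamma}<2$, so $M^{2,q}\hookrightarrow L^{2}$ by Lemma~\ref{rl}(2); hence $F=f\bar g\in L^{1}$ with $\|\widehat F\|_{L^{\infty}}\le\|F\|_{L^{1}}\le\|f\|_{L^2}\|g\|_{L^2}$. Because $\gamma-d>-d$, the weight $|\xi|^{\gamma-d}$ is integrable near the origin, so
\[
\int_{|\xi|\le1}|\xi|^{\gamma-d}|\widehat F(\xi)|\,d\xi\le\|\widehat F\|_{L^{\infty}}\int_{|\xi|\le1}|\xi|^{\gamma-d}\,d\xi\lesssim\|f\|_{M^{2,q}}\|g\|_{M^{2,q}}.
\]
Thus the origin singularity is tamed purely by the $L^{1}$‑bound on $F$, and the critical exponent plays no role here.

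For the high‑frequency region I would pass to the frequency‑uniform decomposition. Covering $\{|\xi|\ge1\}$ by the cubes $Q_k$ and applying Cauchy–Schwarz on each unit cube (using $|\xi|\asymp\langle k\rangle$ and $\sigma_k\gtrsim1$ on $Q_k$), one obtains $\int_{|\xi|\ge1}|\xi|^{\gamma-d}|\widehat F|\,d\xi\lesssim\sum_{k\neq0}\langle k\rangle^{\gamma-d}\|\square_k F\|_{L^{2}}$. I would then estimate $\|\square_k(f\bar g)\|_{L^2}$ bilinearly: since $\widehat{(\square_m f)(\square_n\bar g)}$ is supported in a bounded neighbourhood of $m+n$, only terms with $|m+n-k|\lesssim1$ survive, and Hölder together with the Bernstein inequality $\|\square_m u\|_{L^4}\lesssim\|\square_m u\|_{L^2}$ gives $\|\square_k(f\bar g)\|_{L^2}\lesssim\sum_{|m+n-k|\lesssim1}\|\square_m f\|_{L^2}\|\square_n\bar g\|_{L^2}$. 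Setting $a_m=\|\square_m f\|_{L^2}$, $\beta_n=\|\square_n g\|_{L^2}$, and using $\|\square_n\bar g\|_{L^2}\asymp\beta_{-n}$ together with $\langle k\rangle\asymp\langle m+n\rangle$, the high‑frequency contribution is dominated (after reindexing $n\mapsto-n$) by the double sum $\sum_{m,n}\langle m-n\rangle^{-(d-\gamma)}a_m\beta_n$.

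The heart of the matter is that this last sum is exactly the bilinear form of the Hardy–Littlewood–Sobolev inequality with kernel exponent $\lambda=d-\gamma\in(0,d)$; its balance condition $\tfrac1q+\tfrac1q+\tfrac{d-\gamma}{d}=2$ is solved precisely by $q=\tfrac{2d}{d+\gamma}$, the endpoint in the hypothesis. Since $1<\tfrac{2d}{d+\gamma}<\infty$, the \emph{strong}‑type HLS inequality applies and yields the bound $\lesssim\|a\|_{\ell^{q}}\|\beta\|_{\ell^{q}}=\|f\|_{M^{2,q}}\|g\|_{M^{2,q}}$; for $q<\tfrac{2d}{d+\gamma}$ one runs HLS at the endpoint exponent and invokes $\ell^{q}\hookrightarrow\ell^{2d/(d+\gamma)}$. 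Combining the two frequency regions with the module estimate closes the argument. I expect the endpoint $q=\tfrac{2d}{d+\gamma}$ to be the only real obstacle: a naive Hölder or Young estimate on the weighted series loses a logarithm there (the weight $\langle k\rangle^{\gamma-d}$ lands only in weak $\ell^{q'}$), and the point is to recognise the sum as the \emph{critical} HLS form — equivalently, a discrete fractional‑integration operator — so as to reach the endpoint without an $\varepsilon$‑loss.
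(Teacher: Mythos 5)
Your proof is correct, but it takes a genuinely different route in its second half. The opening reduction is identical to the paper's: both apply the $\mathcal{F}L^1$-module property of Proposition~\ref{gap} to strip off $h$, reducing matters to $\|\,|\cdot|^{-\gamma}\ast(f\bar g)\|_{\mathcal{F}L^{1}}\lesssim\|f\|_{M^{2,q}}\|g\|_{M^{2,q}}$. From there the paper stays entirely on the continuous side: it writes the $\mathcal{F}L^1$ norm as the bilinear form $\langle |I^{\gamma}\hat f|,|\widehat{\bar g}|\rangle$, applies H\"older with exponents $\frac{2d}{d-\gamma}$ and $\frac{2d}{d+\gamma}$, uses the continuous Hardy--Littlewood--Sobolev boundedness $I^{\gamma}:L^{2d/(d+\gamma)}\to L^{2d/(d-\gamma)}$, and then converts $\|\hat f\|_{L^{2d/(d+\gamma)}}$ into a modulation norm by citing the sharp embedding $M^{\min\{p',2\},p}(\R^d)\hookrightarrow\mathcal{F}L^{p}(\R^d)$ of Lemma~\ref{rl}(3) --- so no frequency splitting is needed, and the endpoint $q=\frac{2d}{d+\gamma}$ comes for free because the continuous HLS inequality is already strong-type at critical scaling (your concern about a logarithmic loss is thus resolved in the paper by the same mechanism you invoke, just without discretization). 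Your version instead discretizes with the operators $\square_k$, splits at $|\xi|=1$, runs a bilinear paraproduct decomposition with Bernstein, and concludes with a \emph{discrete} critical HLS inequality; in effect, your Cauchy--Schwarz-on-cubes step re-proves the embedding $M^{2,p}\hookrightarrow\mathcal{F}L^{p}$ ($p\le 2$) that the paper cites, which makes your argument more self-contained and makes the criticality of $q=\frac{2d}{d+\gamma}$ transparent, at the cost of length. Two details you assert but should justify: the discrete strong-type HLS at the critical exponent follows by transference (replace $a_m,\beta_n$ by functions constant on unit cubes and use $|x-y|\lesssim|m-n|$ for $m\neq n$), and the diagonal $m=n$ of your kernel $\langle m-n\rangle^{-(d-\gamma)}$ is not covered by that transference --- it needs a separate Cauchy--Schwarz step using $\ell^{q}\hookrightarrow\ell^{2}$ (valid since $q\le\frac{2d}{d+\gamma}<2$); likewise $\|\square_n\bar g\|_{L^2}\asymp\beta_{-n}$ holds only up to neighboring indices, via the almost orthogonality \eqref{aor}, which is harmless.
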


\begin{proof}  By Proposition \ref{gap}, we have
\begin{eqnarray*}
\| H_{\gamma}(f, g,h)\|_{M^{p,q}} &  \lesssim &  \|  |\cdot|^{-\gamma} \ast (f \bar{g})\|_{M^{\infty, 1}} \|h\|_{M^{p,q}}\\
& \lesssim &   \|  |\cdot|^{-\gamma} \ast (f \bar{g})\|_{\mathcal{F}L^{1}} \|h\|_{M^{p,q}}.
\end{eqnarray*}  We note that
\begin{eqnarray*}
\left| |\xi|^{-(d-\gamma)} \widehat{fg}(\xi) \right| & = & \frac{1}{|\xi|^{d-\gamma}} \left|  \int_{\mathbb R^d} \hat{f}(\xi - \eta) \widehat{\bar{g}}(\eta) d\eta \right|\\
& \leq &  \frac{1}{|\xi|^{d-\gamma}}  \int_{\mathbb R^d} |\hat{f}(\xi - \eta)| | \widehat{\bar{g}}(\eta)| d\eta 
\end{eqnarray*}
and integrating with respect to $\xi,$ we get
\begin{eqnarray*}
  \|  |\cdot|^{-\gamma} \ast (f \bar{g})\|_{\mathcal{F}L^{1}} & \lesssim & \int_{\R^d} \int_{\R^d} \frac{|\hat{f}(\xi_1)| |\widehat{\bar{g}}(\xi_2)|}{|\xi_1- \xi_2|^{d-\gamma}} d\xi_1 d\xi_2= \left\langle | I^{\gamma}\hat{f}|,| \widehat{\bar{g}}|\right \rangle_{L^2(\R^d)} 
\end{eqnarray*}
where $I^{\gamma}$ denotes the Riesz potential of order $\gamma$:
\[
I^{\gamma}\hat{f}(x)=C_{\gamma}\int_{\mathbb R^d}\frac{\hat{f}(y)}{|x-y|^{d-\gamma}} dy.\] By H\"older and  Hardy-Littlewood Sobolev  inequalities and Lemma \ref{rl}, we have 
\begin{eqnarray*}
\|  |\cdot|^{-\gamma} \ast (f \bar{g})\|_{\mathcal{F}L^{1}} &= & \|I^{\gamma}\hat{f}\|_{L^{\frac{2d}{d-\gamma}}} \|\hat{\bar{g}}\|_{L^{\frac{2d}{d+\gamma}}}\\
  & \lesssim  & \|\hat{f}\|_{L^{\frac{2d}{d+\gamma}}} \|\hat{\bar{g}}\|_{L^{\frac{2d}{d+\gamma}}}\\
   & \lesssim  & \|f\|_{M^{\min\left \{ \frac{2d}{d-\gamma}, 2 \right\}, \frac{2d}{d+\gamma}}} \|g\|_{M^{\min \left\{ \frac{2d}{d-\gamma}, 2 \right\},  \frac{2d}{d+\gamma}}}\\
   & = & \|f\|_{M^{2, \frac{2d}{d+\gamma}}} \|g\|_{M^{2, \frac{2d}{d+\gamma}}}\\
   & \lesssim &  \|f\|_{M^{p, q}} \|g\|_{M^{p, q}}.
\end{eqnarray*}
This completes the proof. 
\end{proof}
We next prove a related result for weighted modulation spaces $M^{p,q}_s$.

\begin{proposition} \label{fip} Assume that $0<\gamma < d.$ The following statements hold
\begin{enumerate}
\item[(i)] If   $
1< p_1< p_2< \infty $   with
$\frac{1}{p_1}+\frac{\gamma}{d}-1= \frac{1}{p_2}$ and 
$1\leq  q \leq \infty, s\geq 0.$  For any $f\in M^{p_1,q}_s(\mathbb R^d),$ we have 
$\| |\cdot|^{-\gamma} \ast f\|_{M^{p_2,q}_s} \lesssim \|f\|_{M^{p_1,q}_s}.$
\item[(ii)]  Let $1< p<\infty $ and $\frac{1}{p}+\frac{\gamma}{d}-1= \frac{1}{p+\epsilon}$
 for some $\epsilon>0.$ For any $f,g,h \in M^{p,1}_s(\mathbb R^d),$ we have  $$\|H_{\gamma}(f, g, h)\|_{M^{p,1}_s} \lesssim \|f\|_{M_s^{p,1}}  \|g\|_{M_s^{p,1}}  \|h\|_{M_s^{p,1}}.$$ 
\end{enumerate}
\end{proposition}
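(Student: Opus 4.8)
The plan is to prove part (i) directly from the frequency-uniform description \eqref{edfw} of the modulation norm, and then to bootstrap part (ii) from part (i) using the algebra property of Proposition \ref{gap} together with the inclusions recorded in Lemma \ref{rl}.

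For part (i), the key observation is that $f\mapsto |\cdot|^{-\gamma}\ast f$ is a Fourier multiplier with symbol $c_{d,\gamma}|\xi|^{-(d-\gamma)}$ (the Fourier transform of $|\cdot|^{-\gamma}$ for $0<\gamma<d$), so it commutes with every frequency-uniform projection $\square_k$:
$$\square_k\big(|\cdot|^{-\gamma}\ast f\big)=|\cdot|^{-\gamma}\ast(\square_k f).$$
By \eqref{edfw} it then suffices to estimate the pieces $\big\||\cdot|^{-\gamma}\ast(\square_k f)\big\|_{L^{p_2}}$. Since $|\cdot|^{-\gamma}\ast\cdot$ is a constant multiple of the Riesz potential $I^{d-\gamma}$, I would invoke the Hardy–Littlewood–Sobolev inequality to obtain
$$\big\||\cdot|^{-\gamma}\ast(\square_k f)\big\|_{L^{p_2}}\lesssim\|\square_k f\|_{L^{p_1}}$$
with a constant independent of $k$, the point being that the hypothesis $\tfrac1{p_2}=\tfrac1{p_1}+\tfrac\gamma d-1$ is exactly the HLS scaling for order $d-\gamma$, while $1<p_1<p_2<\infty$ and $0<\gamma<d$ are precisely its admissibility conditions. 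Multiplying by $(1+|k|)^{sq}$, summing in $k$, and applying \eqref{edfw} once more yields $\||\cdot|^{-\gamma}\ast f\|_{M^{p_2,q}_s}\lesssim\|f\|_{M^{p_1,q}_s}$. I would run this first on $\mathcal S(\R^d)$ and extend by density (Lemma \ref{rl}(4)) for $q<\infty$, which is the only range needed below.

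For part (ii), write $F=f\bar g$ and $A=|\cdot|^{-\gamma}\ast F$, so that $H_\gamma(f,g,h)=A\,h$, and proceed in three steps. First, peel off $h$ by the algebra property with exponents $(\infty,1)\cdot(p,1)\hookrightarrow(p,1)$, giving $\|A\,h\|_{M^{p,1}_s}\lesssim\|A\|_{M^{\infty,1}_s}\,\|h\|_{M^{p,1}_s}$. Second, take $p_1=p$ and $p_2=p+\epsilon$ (admissible exactly because of the hypothesis $\tfrac1p+\tfrac\gamma d-1=\tfrac1{p+\epsilon}$) and combine part (i) with the inclusion $M^{p+\epsilon,1}_s\hookrightarrow M^{\infty,1}_s$ from Lemma \ref{rl}(1):
$$\|A\|_{M^{\infty,1}_s}\lesssim\|A\|_{M^{p+\epsilon,1}_s}=\big\||\cdot|^{-\gamma}\ast F\big\|_{M^{p+\epsilon,1}_s}\lesssim\|F\|_{M^{p,1}_s}.$$
Third, control $F=f\bar g$ by the algebra property again, now with exponents $(p,1)\cdot(\infty,1)\hookrightarrow(p,1)$, using $M^{p,1}_s\hookrightarrow M^{\infty,1}_s$ and the conjugation invariance from Lemma \ref{rl}(7):
$$\|f\bar g\|_{M^{p,1}_s}\lesssim\|f\|_{M^{p,1}_s}\,\|\bar g\|_{M^{\infty,1}_s}\lesssim\|f\|_{M^{p,1}_s}\,\|g\|_{M^{p,1}_s}.$$
Chaining the three displays gives the claimed trilinear bound. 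Routing the second factor through $M^{\infty,1}_s$, rather than writing $f\bar g\in M^{p/2,1}_s$, keeps every exponent in $[1,\infty]$ and so covers the full range $1<p<\infty$ without the constraint $p\geq 2$.

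The only genuine analytic input is the Hardy–Littlewood–Sobolev inequality in part (i); the rest is bookkeeping with the commutation identity, \eqref{edfw}, and the module and inclusion properties already available. The hard part, such as it is, will be the care needed in part (i): checking that the HLS constant is uniform in $k$ (immediate, since HLS is applied to the single $L^{p_1}$-function $\square_k f$), and making rigorous sense of $|\cdot|^{-\gamma}\ast f$ as a tempered distribution for $f$ merely in a modulation space, both of which I would settle by working on $\mathcal S(\R^d)$ and passing to the limit. I do not expect the weight $(1+|k|)^{s}$ to intervene, since the multiplier commutes with $\square_k$ and leaves the $k$-indexing in \eqref{edfw} untouched.
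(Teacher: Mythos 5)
Your proof is correct; part (ii) coincides with the paper's argument step for step (peel off $h$ via $M^{\infty,1}_s\cdot M^{p,1}_s\hookrightarrow M^{p,1}_s$, embed $M^{p+\epsilon,1}_s\hookrightarrow M^{\infty,1}_s$, apply part (i) with $(p_1,p_2)=(p,p+\epsilon)$, then bound the product $f\bar g$), while part (i) takes a genuinely different, though parallel, route. The paper proves (i) on the STFT side: writing $V_gf(x,w)=e^{-2\pi i x\cdot w}\,(f\ast M_wg^*)(x)$ with $g^*(y)=\overline{g(-y)}$, it applies the Hardy--Littlewood--Sobolev inequality in the $x$-variable to $|\cdot|^{-\gamma}\ast(f\ast M_wg^*)$ for each fixed $w$, then takes the weighted $L^q_w$-norm; you instead use the discrete characterization \eqref{edfw}, commute the Riesz potential with $\square_k$ (legitimate, as both are Fourier multipliers, and $|\xi|^{-(d-\gamma)}$ is locally integrable), and apply HLS blockwise with a constant trivially uniform in $k$. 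The analytic core is identical --- the convolution operator is translation invariant, so it passes through whichever frequency-localization structure one uses, and HLS with exponent relation $\frac1{p_2}=\frac1{p_1}+\frac\gamma d-1$ is the only hard input. What the paper's route buys: no commutation identity and no density step, so all $q\in[1,\infty]$ including $q=\infty$ are covered at once (your density argument, as you note, handles only $q<\infty$, though that suffices for part (ii) where $q=1$; for $q=\infty$ your blockwise estimate in fact holds directly, so the restriction is removable). What your route buys: it stays inside the $\square_k$-calculus already set up for Proposition \ref{bi}, and in part (ii) you are actually more careful than the paper --- the bound $\|f\bar g\|_{M^{p,1}_s}\lesssim\|f\|_{M^{p,1}_s}\|g\|_{M^{p,1}_s}$ does not follow from Proposition \ref{gap} with exponents $(p,1)\cdot(p,1)$ (since $\frac1p+\frac1p\neq\frac1p$), and your explicit routing of one factor through $M^{\infty,1}_s$ together with the conjugation invariance of Lemma \ref{rl}(7) is exactly the justification the paper leaves implicit.
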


\begin{proof} We may rewrite the STFT as 
$V_{g}(x,w)= e^{-2\pi i  x \cdot w} (f\ast M_wg^*)(x)$  where $g^*(y) = \overline{g(-y)}.$ 
\begin{enumerate}
\item[(i)] Using Hardy-Littlewood-Sobolev inequality, we obtain
\begin{eqnarray*}
\||\cdot|^{-\gamma}\ast f\|_{M^{p_2,q}_s} & =  &  \left \|   \left \|  |\cdot|^{-\gamma} \ast  (f \ast M_{w}g^{*}) \right \|_{L^{p_2}}  \langle w \rangle^s  \right\|_{L^{q}_w}\\
& \lesssim &   \left  \| \|   f \ast M_{w}g^{*}\|_{L^{p_1}}  \langle w \rangle^s  \right\|_{L^{q}_w}\\
& \lesssim & \|f\|_{M^{p_1,q}_s}.
\end{eqnarray*}
This completes the proof of part (i).
\item[(ii)]   By Proposition  \ref{gap} and part (1) of  Lemma \ref{rl}, we have 
\begin{eqnarray*}
\| H_{\gamma}(f, g, h)\|_{M_s^{p,1}} & \lesssim & \| |\cdot|^{-\gamma}\ast (fg)\|_{M_s^{\infty,1}} \|h\|_{M_s^{p,1}}\\
& \lesssim & \| |\cdot|^{-\gamma} \ast (f g)\|_{M_s^{p+\epsilon,1}} \|h\|_{M_s^{p,1}},
\end{eqnarray*}
for some $\epsilon>0.$
By  part (i) and Proposition \ref{gap}, we have  $ \|T_{\gamma}(fg)\|_{M_s^{p+\epsilon,1}} \lesssim \| fg\|_{M_s^{p,1}} \lesssim \|f\|_{M_s^{p,1}}\|g\|_{M_s^{p,1}} .$  
\end{enumerate}
\end{proof}
The following result immediately follows.

\begin{proposition}\label{ds1} Let $1< p<\infty $ and $\frac{1}{p}+\frac{\gamma}{d}-1= \frac{1}{p+\epsilon}$
 for some $\epsilon>0.$  For any $f, g,h \in M^{p,1}(\mathbb R^d) \cap L^2(\mathbb R^d),$ we have  $$ \|(|\cdot|^{-\gamma}\ast (f\bar{g}))h\|_{M^{p,1}\cap L^2} \lesssim \|f\|_{M^{p,1} \cap L^2}  \|g\|_{M^{p,1}\cap L^2}  \|h\|_{M^{p,1}\cap L^2}.$$
\end{proposition}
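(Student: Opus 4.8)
The plan is to use that $M^{p,1}\cap L^2$ is the intersection Banach space with norm $\|\cdot\|_{M^{p,1}\cap L^2}\asymp \|\cdot\|_{M^{p,1}}+\|\cdot\|_{L^2}$, and to bound the two constituent norms of $H_\gamma(f,g,h)=(|\cdot|^{-\gamma}\ast(f\bar g))h$ separately. The modulation-space component requires no new work: it is exactly the $s=0$ instance of Proposition~\ref{fip}(ii), which yields
$$\|(|\cdot|^{-\gamma}\ast(f\bar g))h\|_{M^{p,1}}\lesssim \|f\|_{M^{p,1}}\|g\|_{M^{p,1}}\|h\|_{M^{p,1}}\leq \|f\|_{M^{p,1}\cap L^2}\|g\|_{M^{p,1}\cap L^2}\|h\|_{M^{p,1}\cap L^2}.$$
Thus the only genuinely new estimate is the $L^2$ bound.

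For the $L^2$ component I would run the standard Hölder--Hardy--Littlewood--Sobolev--Hölder chain tuned to the Hartree kernel. First, Hölder's inequality with $\tfrac12=\tfrac{\gamma}{2d}+\tfrac{d-\gamma}{2d}$ gives
$$\|(|\cdot|^{-\gamma}\ast(f\bar g))h\|_{L^2}\leq \big\||\cdot|^{-\gamma}\ast(f\bar g)\big\|_{L^{2d/\gamma}}\,\|h\|_{L^{2d/(d-\gamma)}}.$$
Next, Hardy--Littlewood--Sobolev applied to the Riesz kernel $|\cdot|^{-\gamma}$ (admissible precisely because $0<\gamma<d$ forces $1<\tfrac{2d}{2d-\gamma}<\tfrac{2d}{\gamma}<\infty$ with $\tfrac{2d-\gamma}{2d}-\tfrac{\gamma}{2d}=1-\tfrac{\gamma}{d}$) gives $\||\cdot|^{-\gamma}\ast(f\bar g)\|_{L^{2d/\gamma}}\lesssim \|f\bar g\|_{L^{2d/(2d-\gamma)}}$, and a final Hölder split distributes this product evenly, $\|f\bar g\|_{L^{2d/(2d-\gamma)}}\leq \|f\|_{L^{4d/(2d-\gamma)}}\|g\|_{L^{4d/(2d-\gamma)}}$. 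Altogether one obtains
$$\|(|\cdot|^{-\gamma}\ast(f\bar g))h\|_{L^2}\lesssim \|f\|_{L^{4d/(2d-\gamma)}}\|g\|_{L^{4d/(2d-\gamma)}}\|h\|_{L^{2d/(d-\gamma)}}.$$

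It then remains to absorb these Lebesgue norms into the $M^{p,1}\cap L^2$ norm. The key embedding is $M^{p,1}\hookrightarrow M^{\infty,1}\hookrightarrow L^\infty$ (Lemma~\ref{rl}(1)--(2)), so that $M^{p,1}\cap L^2\hookrightarrow L^2\cap L^\infty\hookrightarrow L^r$ for every $r\in[2,\infty]$, via the interpolation estimate $\|u\|_{L^r}\leq \|u\|_{L^2}^{2/r}\|u\|_{L^\infty}^{1-2/r}\lesssim \|u\|_{M^{p,1}\cap L^2}$. A direct check shows both exponents produced above lie in $[2,\infty)$ whenever $0<\gamma<d$: the inequalities $\tfrac{4d}{2d-\gamma}\geq2$ and $\tfrac{2d}{d-\gamma}\geq2$ both reduce to $\gamma\geq0$, while finiteness needs only $\gamma<d$. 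Hence each factor is controlled by the corresponding $M^{p,1}\cap L^2$ norm, and combining with the modulation-space bound finishes the proof.

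The argument is almost entirely bookkeeping, which is why the proposition ``immediately follows''; there is no serious obstacle. The one point requiring a moment's care is verifying that the three exponents generated by the Hölder/HLS chain genuinely sit in the admissible range $[2,\infty)$ for all $0<\gamma<d$ — so that the non-endpoint HLS inequality applies and the $L^2\cap L^\infty$ interpolation embedding is valid — which is why I would record those elementary inequalities explicitly rather than leave them implicit.
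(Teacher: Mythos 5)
Your proof is correct, and your treatment of the $M^{p,1}$ component coincides with the paper's: both reduce it to Proposition~\ref{fip}(ii) with $s=0$. For the $L^2$ component, however, you take a genuinely different route. The paper never leaves the modulation-space framework: it applies H\"older crudely as $\|(|\cdot|^{-\gamma}\ast(f\bar g))h\|_{L^2}\leq \||\cdot|^{-\gamma}\ast(f\bar g)\|_{L^\infty}\|h\|_{L^2}$ and then controls the potential by the chain $\||\cdot|^{-\gamma}\ast(f\bar g)\|_{L^\infty}\lesssim \||\cdot|^{-\gamma}\ast(f\bar g)\|_{M^{\infty,1}}\lesssim \||\cdot|^{-\gamma}\ast(f\bar g)\|_{M^{p+\epsilon,1}}\lesssim \|f\bar g\|_{M^{p,1}}\lesssim \|f\|_{M^{p,1}}\|g\|_{M^{p,1}}$, invoking Lemma~\ref{rl}(1)--(2), Proposition~\ref{fip}(i) (this is where the relation $\frac{1}{p}+\frac{\gamma}{d}-1=\frac{1}{p+\epsilon}$ enters) and the algebra property of Proposition~\ref{gap}. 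You instead distribute the integrability among all three factors by H\"older, apply Hardy--Littlewood--Sobolev directly at the Lebesgue level, and absorb the resulting $L^{4d/(2d-\gamma)}$ and $L^{2d/(d-\gamma)}$ norms through the embedding $M^{p,1}\cap L^2\hookrightarrow L^2\cap L^\infty\hookrightarrow L^r$ for $r\in[2,\infty]$; your exponent bookkeeping checks out, since the HLS triple (kernel exponent $\gamma$, source $\frac{2d}{2d-\gamma}$, target $\frac{2d}{\gamma}$) is non-endpoint precisely when $0<\gamma<d$. As for what each approach buys: the paper's chain is shorter given that Proposition~\ref{fip}(i) is already in hand, and it places $h$ purely in $L^2$, which mirrors how the estimate is redeployed in Lemma~\ref{cl}(ii); your argument is more elementary and self-contained for the $L^2$ piece, and it in fact establishes that piece under weaker hypotheses --- it uses neither the relation defining $\epsilon$ nor Proposition~\ref{fip}(i), only $f,g,h\in L^2\cap L^\infty$ --- thereby making visible that the hypothesis on $p$, $\gamma$, $\epsilon$ is needed solely for the $M^{p,1}$ half of the norm.
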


\begin{proof}  By part (2) of Lemma \ref{rl}, we have 
\begin{eqnarray*}
\|( |\cdot|^{-\gamma} \ast f \bar{g})h\|_{M^{p,1}\cap L^2} & := & \| (|\cdot|^{-\gamma}\ast (f\bar{g}))h\|_{M^{p,1}} + \| (|\cdot|^{-\gamma}\ast (f\bar{g}))h\|_{L^2} \\
& \lesssim & \|f\|_{M^{p,1}}  \|g\|_{M^{p,1}}  \|h\|_{M^{p,1}} + \| |\cdot|^{-\gamma} \ast (f\bar{g})\|_{L^{\infty}} \|h\|_{L^2}\\
& \lesssim &  \|f\|_{M^{p,1}}  \|g\|_{M^{p,1}}  \|h\|_{M^{p,1}} + \| |\cdot|^{-\gamma} \ast (f\bar{g})\|_{M^{\infty, 1}} \|h\|_{L^2}\\
& \lesssim &  \|f\|_{M^{p,1}}  \|g\|_{M^{p,1}}  \|h\|_{M^{p,1}} + \| |\cdot|^{-\gamma} \ast (f\bar{g})\|_{M^{p+\epsilon, 1}} \|h\|_{L^2}\\
& \lesssim &  \|f\|_{M^{p,1}}  \|g\|_{M^{p,1}}  \|h\|_{M^{p,1}} +  \|f\|_{M^{p,1}} \|g\|_{M^{p,1}}  \|h\|_{L^2}\\
& \lesssim & \|f\|_{M^{p,1}\cap L^2}  \|g\|_{M^{p,1}\cap L^2}  \|h\|_{M^{p,1}\cap L^2}.
\end{eqnarray*}
This completes the proof.
\end{proof}

We will also need the following result. 

\begin{Lemma}\label{cl} Let  $0<\gamma <d.$
\begin{enumerate}
\item[(i)]  Let $1\leq p \leq 2, 1\leq q \leq  \frac{2d}{d+\gamma}.$ For any $ f,g \in M^{p,q}(\mathbb R^{d})$, we have
$$\| (|\cdot|^{-\gamma}\ast |f|^{2})f - (|\cdot|^{-\gamma}\ast |g|^{2})g\|_{M^{p,q}} \lesssim  (\|f\|_{M^{p,q}}^{2}+\|f\|_{M^{p,q}}\|g\|_{M^{p,q}}+ \|g\|_{M^{p,q}}^{2}) \|f-g\|_{M^{p,q}}.$$
\item[(ii)]   Let $1< p<\infty $ and $\frac{1}{p}+\frac{\gamma}{d}-1= \frac{1}{p+\epsilon}$
 for some $\epsilon>0.$  For any $f, g \in M^{p,1}(\mathbb R^d)\cap L^2(\mathbb R^d),$ we have 
\begin{eqnarray*}
 \|(|\cdot|^{-\gamma}\ast |f|^{2})f - (|\cdot|^{-\gamma}\ast |g|^{2})g\|_{M^{p,1}\cap L^2}  &\lesssim  & (\|f\|_{M^{p,1}\cap L^2}^{2}+\|f\|_{M^{p,1}\cap L^2}\|g\|_{M^{p,1}\cap L^2}\\
 &&+ \|g\|_{M^{p,1}\cap L^2}^{2}) \|f-g\|_{M^{p,1}\cap L^2}.
\end{eqnarray*}
\end{enumerate}
\end{Lemma}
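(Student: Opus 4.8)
The plan is to reduce both estimates to the multilinear bounds already established, namely Proposition~\ref{t1} for part (i) and Proposition~\ref{ds1} for part (ii), via a standard telescoping identity. The key starting observation is that, writing $|f|^2=f\bar f$, the cubic nonlinearity is nothing but the trilinear form $H_\gamma$ evaluated on the diagonal:
\[
(|\cdot|^{-\gamma}\ast|f|^2)f = H_\gamma(f,f,f),
\]
so that the quantity I must control is $H_\gamma(f,f,f)-H_\gamma(g,g,g)$.

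First I would expand this difference by inserting intermediate terms, using that $H_\gamma$ is linear in its first and third slots and conjugate-linear (hence additive) in its second slot:
\[
H_\gamma(f,f,f)-H_\gamma(g,g,g)=H_\gamma(f-g,f,f)+H_\gamma(g,f-g,f)+H_\gamma(g,g,f-g).
\]
The middle term uses $\bar f-\bar g=\overline{f-g}$, so the decomposition survives the complex conjugation in the second argument. This identity holds verbatim whether the ambient norm is $M^{p,q}$ or $M^{p,1}\cap L^2$, so it serves both parts simultaneously.

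For part (i) I would then apply Proposition~\ref{t1} to each of the three terms, obtaining the bounds $\|f-g\|_{M^{p,q}}\|f\|_{M^{p,q}}^2$, $\|g\|_{M^{p,q}}\|f-g\|_{M^{p,q}}\|f\|_{M^{p,q}}$, and $\|g\|_{M^{p,q}}^2\|f-g\|_{M^{p,q}}$ respectively; summing these and factoring out the common factor $\|f-g\|_{M^{p,q}}$ produces exactly the claimed prefactor $\|f\|_{M^{p,q}}^2+\|f\|_{M^{p,q}}\|g\|_{M^{p,q}}+\|g\|_{M^{p,q}}^2$. For part (ii), the identical telescoping combined with the $M^{p,1}\cap L^2$ trilinear estimate of Proposition~\ref{ds1} (whose hypotheses on $p$, $\gamma$, and $\epsilon$ coincide with those of this part of the lemma) yields the stated inequality in precisely the same manner.

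Since both multilinear estimates are already in hand, there is no serious analytic obstacle. The only points requiring care are bookkeeping ones: the conjugate-linearity in the middle argument must be tracked correctly when telescoping, and—for part (ii)—one should note that the intersection norm $M^{p,1}\cap L^2$ is exactly the norm for which Proposition~\ref{ds1} was established, so the $M^{p,1}$ and $L^2$ components need not be treated by separate arguments.
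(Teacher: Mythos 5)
Your proposal is correct and takes essentially the same route as the paper: the paper's proof uses the two-term identity $(|\cdot|^{-\gamma}\ast|f|^{2})(f-g)+(|\cdot|^{-\gamma}\ast(|f|^{2}-|g|^{2}))g$ and then splits $|f|^{2}-|g|^{2}=f\,\overline{(f-g)}+(f-g)\bar{g}$, which is precisely your three-term telescoping of $H_{\gamma}$ with two terms grouped together. Both arguments then conclude by the same trilinear bounds, Proposition~\ref{t1} for part (i) and Proposition~\ref{ds1} for part (ii).
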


\begin{proof}
Notice that 
\begin{eqnarray*}
 \|( |\cdot|^{-\gamma}\ast |f|^{2})(f-g)\|_{M^{p,1} \cap L^2}  &  \lesssim & \|f\|_{M^{p,1}\cap L^2}^2 \|f-g\|_{M^{p,1}\cap L^2},
\end{eqnarray*}
and 
\begin{eqnarray*}
 \|( |\cdot|^{-\gamma} \ast (|f|^{2}- |g|^{2}))g\|_{M^{p,1}\cap L^2}  & \lesssim & \left( \|f\|_{M_s^{p,1}} \|g\|_{M^{p,1}} + \|g\|_{M^{p,1}}^2\right) \|f-g\|_{M^{p,1}}\\
 && +  \| |\cdot|^{-\gamma} \ast (|f|^{2}- |g|^{2}))g\|_{L^2}\\
 & \lesssim &  \left( \|f\|_{M_s^{p,1}\cap L^2} \|g\|_{M^{p,1}} + \|g\|_{M^{p,1}\cap L^2}^2\right) \|f-g\|_{M^{p,1}\cap L^2}.
 \end{eqnarray*}
 This together with the following identity
$$( |\cdot|^{-\gamma}\ast |f|^{2})f- ( |\cdot|^{-\gamma}\ast |g|^{2})g= ( |\cdot|^{-\gamma}\ast |f|^{2})(f-g) + ( |\cdot|^{-\gamma} \ast (|f|^{2}- |g|^{2}))g, $$
gives  the desired inequality.
\end{proof}

\section{Proofs of main results}\label{pmain}

\subsection{Local well-posedness for Hartree-Fock equations}\label{lhf}
We can now prove our main results, beginning with Theorem~\ref{gidi}.

\begin{proof}[Proof of Theorem \ref{gidi}]
By Duhamel's principle, we  rewrite  the Cauchy problem \eqref{HF} in an integral form: for $k=1,..., N,$
$$\Psi_k(\psi_1,..., \psi_N):=\psi_{k}(t) = U(t)\psi_{0k} -i \int_0^tU(t-s) (H\psi_k)(s) ds + i \int_0^t U(t-s) (F_k(\psi_k))(s) ds.$$


We shall show that $\Psi$ has a unique fixed point in an appropriate function space, for small  $t.$ For this, we consider Banach space  $\left(C([0, T], X)\right)^N,$ with the norm
\[ \|u\|_{ \left( C([0, T], X) \right)^N} = \max_{1\leq k \leq N}  \sup_{t\in [0, T]} \|u_k(t)\|_{X}\]
where $u=(u_1,...,u_N)\in \left( C([0, T], X) \right) ^N.$ By Propositions \ref{des} and \ref{drw}, we have 
\begin{eqnarray*}
\|U(t) \psi_{0k}\|_{X} \lesssim  C_T \|\psi_{0k}\|_{X}
\end{eqnarray*}
where $C_{T}= C (1+|t|)^{d\left| \frac{1}{p}-\frac{1}{2} \right|}.$ By   Minkowski's inequality for integrals, Propositions \ref{des} and \ref{drw} and Propositions~\ref{t1}, and~\ref{fip},  we  obtain

\begin{eqnarray*}
\left\|\int_0^tU(t-s) (H\psi_k)(s) ds\right\|_{X} & \lesssim &  T C_{T}
 \sum_{\ell=1}^{N}  \left\| \left( \frac{\kappa}{|x|^{\gamma}} \ast |\psi_{\ell}|^2\right)\psi_{k} \right\|_{X}\\
& \lesssim &  T C_T\sum_{\ell=1}^{N} \|\psi_{k}\|_{X}\|\psi_{\ell}\|_{X}^2.
\end{eqnarray*}
Similarly,

\begin{eqnarray*}
\left\|\int_0^tU(t-s) (F\psi_k)(s) ds\right\|_{X} 
& \lesssim &  T C_T\sum_{\ell=1}^{N} \|\psi_{k}\|_{X}\|\psi_{\ell}\|_{X}^2.
\end{eqnarray*}
Thus, we have
\begin{eqnarray*}
\|\Psi_{k}\|_{L^{\infty}([0, T], X)} & \lesssim   &  C_T \left(  \|\psi_{0k}\|_{X}  + c T \sum_{\ell=1}^{N} \|\psi_{k}\|_{X}\|\psi_{\ell}\|_{X}^2 \right)
\end{eqnarray*}
for some universal constant $c.$ 

For $M>0,$ put
\begin{eqnarray*}
B_{T, M} & = & \left\{ (\psi_{1},..., \psi_{N}) \in  (C([0, T], X)^{N}: \|\psi_{k}\|_{L^{\infty}([0, T], X)} \leq  M \ \   \text{for} \ k=1,..., N  \right\}
\end{eqnarray*}
which is the closed ball of radius $M$ and centered at the origin in $\left(C([0, T], X)\right)^N$. 
Next, we show that the mapping  $\Psi_k$ takes  $B_{T, M}$ into itself for suitable choice of  $M$ and small  $T>0$. Indeed, if we take  $M= 2C_{T} \max \{ \|\psi_{0k}\|_{X}: i=1,...,N\}$ and $\bar{\psi}= (\psi_1,..., \psi_N)\in B_{T,M},$ we obtain
\begin{eqnarray*}
\|\Psi_k(\bar{\psi}) \|_{ C([0,T], X)} \leq \frac{M}{2} + c C_{T} M^3
\end{eqnarray*}
for all $k=1,...,N.$
We choose a $T$ such that  $cC_{T}M^2 \leq 1/2,$ that is, $T\leq \tilde{T}(\|\psi_{01}\|_{X},..., \|\psi_{0N}\|_{X}, d, \gamma)$  and as a consequence we have 
\[ \|\Psi_k(\bar{\psi})\|_{C([0,T], X)} \leq  M  \   \text{for all} \   k=1,...,N. \]
So $B_{T, M}$ is invariant under  the action of $\Psi$ provided that $T>0$ is sufficiently small. Up to diminishing $T,$ contraction follows readily, since $H_{\gamma}$ is a trilinear operator. So there exist  a unique (in $ B_{T, M}$)  fixed point for $\Psi,$ that is, a solution to \eqref{HF}. This completes the proof of Theorem \ref{gidi} part (i). Similarly, we can produce the proof of Theorem \ref{gidi} part (ii) of which we omit the details. 
\end{proof}

\subsection{Global Well-posedness for Hartree-Fock Equations}\label{ghf}
In this section we prove Theorem \ref{dgt}.

\begin{Definition} A pair $(q,r)$ is $\alpha-$fractional admissible if  $q\geq 2, r\geq 2$ and
$$\frac{\alpha}{q} =  d \left( \frac{1}{2} - \frac{1}{r} \right).$$ 
\end{Definition}
We recall the following results. For details, see \cite{keel1998endpoint, guo2014improved}.

\begin{proposition}[Strichartz estimates] \label{fst}   Denote
$$DF(t,x) :=  e^{-it(-\Delta)^{\alpha/2}}\phi(x) +  \int_0^t U(t-\tau )F(\tau,x) d\tau.$$
\begin{enumerate}
\item[(i)]  \label{fst1} Let $\phi \in L^2(\mathbb R^d),$ $d\in \mathbb N$ and $\alpha=2.$   Then for any time slab $I$ and  admissible pairs $(p_i, q_i)$, $i=1,2,$ 
there exists  a constant $C=C(|I|, q_1)$ such that for all intervals $I \ni 0, $
$$ \|D(F)\|_{L^{p_1, q_1}_{t,x}}  \leq  C \|\phi \|_{L^2}+   C  \|F\|_{L^{p_2', q_2'}_{t,x}}, \ \forall F \in L^{p_2'} (I, L^{q_2'})$$  where $p_i'$ and $ q_i'$ are H\"older conjugates of $p_i$ and $q_i$
respectively \cite{keel1998endpoint}.

\item[(ii)] \label{fst2} Let  $d\ge 2$ and  $\frac{2d}{2d-1} < \alpha < 2.$  Assume that $\phi$ and $F$ are radial. Then for any time slab $I$ and  admissible pairs $(p_i, q_i)$, $i=1,2,$ 
there exists  a constant $C=C(|I|, q_1)$ such that for all intervals $I \ni 0, $
$$ \|D(F)\|_{L^{p_1, q_1}_{t,x}}  \leq  C \|\phi \|_{L^2}+   C  \|F\|_{L^{p_2', q_2'}_{t,x}}, \ \forall F \in L^{p_2'} (I, L^{q_2'})$$  where $p_i'$ and $ q_i'$ are H\"older conjugates of $p_i$ and $q_i$ \cite[Corollary 3.4]{guo2014improved}.
\end{enumerate}
\end{proposition}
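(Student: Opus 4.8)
The plan is to deduce both estimates from the abstract Strichartz framework of Keel--Tao, reducing everything to a single pointwise-in-time dispersive bound together with the trivial $L^2$ energy bound. The propagator $U(t)=e^{-it(-\Delta)^{\alpha/2}}$ is unitary on $L^2(\R^d)$ by Plancherel, so $\|U(t)f\|_{L^2}=\|f\|_{L^2}$ for every $t$. The remaining input is a dispersive (decay) estimate of the form $\|U(t)(U(s))^*g\|_{L^\infty}\lesssim |t-s|^{-d/\alpha}\|g\|_{L^1}$, which by interpolation with the energy bound gives $\|U(t)(U(s))^*g\|_{L^r}\lesssim|t-s|^{-\frac{d}{\alpha}(1-\frac2r)}\|g\|_{L^{r'}}$ for $2\le r\le\infty$. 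With decay exponent $\sigma=d/\alpha$, the admissibility relation $\frac{\alpha}{q}=d(\frac12-\frac1r)$ of the Definition is exactly the sharp $\sigma$-admissibility $\frac1q=\sigma(\frac12-\frac1r)$ required by Keel--Tao. Once the dispersive bound is in hand, their abstract theorem delivers both the homogeneous estimate $\|U(t)\phi\|_{L^{p_1}_tL^{q_1}_x}\lesssim\|\phi\|_{L^2}$ and the full retarded inhomogeneous estimate for \emph{all} admissible pairs $(p_1,q_1)$ and $(p_2,q_2)$ simultaneously, including the double endpoint; passing to a finite time slab $I$ only improves constants and accounts for the dependence $C=C(|I|,q_1)$.

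For part (i) with $\alpha=2$, the group $U(t)$ is the free Schr\"odinger propagator, whose convolution kernel is explicit; reading off its modulus gives the classical decay $\|U(t)f\|_{L^\infty}\lesssim|t|^{-d/2}\|f\|_{L^1}$, which is the $\sigma=d/\alpha=d/2$ instance of the dispersive bound above. Feeding this into \cite{keel1998endpoint} yields the asserted estimate for every admissible pair, including the endpoint $(p,q)=(2,\tfrac{2d}{d-2})$ when $d\ge3$. No obstacle arises here beyond matching the mixed-norm notation $L^{p,q}_{t,x}=L^p_t(I,L^q_x)$.

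Part (ii) carries the genuine difficulty. For $1<\alpha<2$ the propagator $e^{-it|D|^\alpha}$ fails the clean global decay of the $\alpha=2$ case: although the phase $|\xi|^\alpha$ is non-degenerate away from the origin (its Hessian has all eigenvalues comparable to $|\xi|^{\alpha-2}$ for $\alpha\neq1$), the decay rate is non-uniform across frequency shells, and Knapp-type examples preclude a loss-free $L^1\to L^\infty$ bound for general data. The remedy, which is the heart of the matter, is to restrict to radial functions: after the spherical reduction the oscillatory integral becomes effectively one-dimensional, and Bessel-function asymptotics combined with stationary phase recover the admissible dispersive estimate. This is precisely the content of Guo--Wang, and the threshold $\alpha>\frac{2d}{2d-1}$ is exactly the range in which their radial Strichartz estimates hold with the admissibility of the Definition. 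I would therefore invoke \cite[Corollary 3.4]{guo2014improved} directly for the homogeneous estimate on radial $\phi$; since the radial class is preserved by $U(t)$, the composition $U(t)(U(s))^*$ stays within radial functions, so the inhomogeneous estimate for non-endpoint pairs follows from the Christ--Kiselev lemma applied to the retarded operator, and the double endpoint is obtained from the bilinear summation argument of Keel--Tao.

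In short, the proof is a two-step reduction: first establish the energy bound (immediate) and the dispersive bound, then invoke the abstract Keel--Tao theorem to obtain both the homogeneous and retarded inhomogeneous estimates on a finite time slab. The only substantive analytic input is the radial dispersive/Strichartz estimate for the fractional group in part (ii); this is the expected main obstacle, and rather than reprove the delicate stationary-phase analysis I would import it from \cite{guo2014improved}, leaving only the verification that its hypotheses (radiality, the range of $\alpha$, and the admissibility condition) align with the statement at hand.
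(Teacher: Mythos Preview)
The paper does not supply a proof of this proposition at all: it is stated as a citation, with the preceding sentence ``We recall the following results. For details, see \cite{keel1998endpoint, guo2014improved}'' and the inline references in parts (i) and (ii) serving as the entire justification. Your proposal therefore goes well beyond what the paper offers, giving a correct outline of the standard Keel--Tao reduction (energy bound plus dispersive decay, with the radial improvement of Guo--Wang for the fractional case), which is indeed the content of the cited sources. No discrepancy to flag; your sketch is sound and aligned with the references the paper invokes.
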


We first establish the following preliminary results. 

\begin{proposition}\label{miF}  Let $\phi\circ h (\xi)= |\xi|^{\alpha}$ where  $\xi \in \mathbb R^d, \alpha>0$  and  $0 < \gamma < \min\{\alpha, d\}.$
\begin{enumerate}
\item[(i)] \label{wrc} Let  $d\in \mathbb N$ and  $\alpha =2.$  If $ (\psi_{01}, ...., \psi_{0N}) \in \left(L^2(\mathbb R^d)\right)^N$ then \eqref{HF} has a unique global solution 
$$ (\psi_1,...,\psi_N)   \in  \left(C(\mathbb R, L^{2}(\mathbb R^d))\cap L^{4\alpha/\gamma}_{loc}(\mathbb R, L^{4d/(2d-\gamma)} (\mathbb R^d))\right)^N.$$ 
In addition, its $L^{2}-$norm is conserved, 
$$\|\psi_k(t)\|_{L^{2}}=\|\psi_{k0}\|_{L^{2}}, \   \forall t \in \mathbb R, k =1,2,...,N$$
and for all $\alpha-$ fractional admissible pairs  $(p,q),$ and $  (\psi_1,..., \psi_N) \in  \left( L_{loc}^{p}(\mathbb R, L^{q}(\mathbb R^d)) \right)^{N}.$

\item[(ii)] \label{rc}  Let  $d\ge 2$ and  $\frac{2d}{2d-1} < \alpha < 2.$  If $ (\psi_{01}, ...., \psi_{0N}) \in \left(L_{rad}^2(\mathbb R^d)\right)^N$ then \eqref{HF} has a unique global solution 
$$ (\psi_1,...,\psi_N)   \in  \left(C(\mathbb R, L_{rad}^{2}(\mathbb R^d))\cap L^{4\alpha/\gamma}_{loc}(\mathbb R, L^{4d/(2d-\gamma)} (\mathbb R^d))\right)^N.$$ 
In addition, its $L^{2}-$norm is conserved, 
$$\|\psi_k(t)\|_{L^{2}}=\|\psi_{k0}\|_{L^{2}}, \   \forall t \in \mathbb R, k =1,2,...,N$$
and for all $\alpha-$ fractional admissible pairs  $(p,q),$ and $  (\psi_1,..., \psi_N) \in  \left( L_{loc}^{p}(\mathbb R, L^{q}(\mathbb R^d)) \right)^{N}.$
\end{enumerate}
\end{proposition}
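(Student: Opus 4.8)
The plan is to run a Strichartz-based contraction in the mass space and then globalize using conservation of the $L^2$ norm. First I would recast \eqref{HF} in the Duhamel form already used in the proof of Theorem~\ref{gidi} and seek a fixed point of the map $\Psi=(\Psi_1,\dots,\Psi_N)$ in the Banach space
\[
Y_I=\left(C(I,L^2(\mathbb R^d))\cap L^{4\alpha/\gamma}_t(I,L^{4d/(2d-\gamma)}_x(\mathbb R^d))\right)^N,
\]
where $I\ni 0$ is a short interval. The key structural observation is that the pair $(p,q):=(4\alpha/\gamma,\,4d/(2d-\gamma))$ is $\alpha$-fractional admissible: one checks $\alpha/p=\gamma/4=d(1/2-1/q)$, and both $p\ge 2$ and $q\ge 2$ hold because $0<\gamma<\alpha$. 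Thus I can apply the Strichartz estimates of Proposition~\ref{fst}(i) (case (i)) with this single admissible pair for both the homogeneous and inhomogeneous terms, obtaining $\|U(\cdot)\psi_{0k}\|_{Y_I}\lesssim \|\psi_{0k}\|_{L^2}$ and an inhomogeneous bound controlling the Duhamel integral by the dual norm $L^{p'}_t L^{q'}_x$ of the nonlinearity.

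The heart of the matter is the nonlinear estimate: I must bound each Hartree term $(|x|^{-\gamma}\ast|\psi_l|^2)\psi_k$ and each Fock term $\psi_l(|x|^{-\gamma}\ast\{\overline{\psi_l}\psi_k\})$ in $L^{p'}_t L^{q'}_x$ by a product of three $Y_I$-norms. Spatially, the Hardy--Littlewood--Sobolev inequality (valid since $0<\gamma<d$) gives $\||x|^{-\gamma}\ast|\psi_l|^2\|_{L^{2d/\gamma}}\lesssim \|\psi_l\|_{L^q}^2$, and a Hölder step then yields
\[
\left\|(|x|^{-\gamma}\ast|\psi_l|^2)\psi_k\right\|_{L^{q'}_x}\lesssim \|\psi_l\|_{L^q_x}^2\,\|\psi_k\|_{L^q_x},
\]
the exponents matching because $1/q'=\gamma/(2d)+(2d-\gamma)/(4d)=(2d+\gamma)/(4d)$; the Fock term is estimated identically. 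Integrating in time and applying Hölder to three factors in $L^p_t$ produces the favorable power $|I|^{(\alpha-\gamma)/\alpha}$, positive precisely because $\gamma<\alpha$. I expect this subcritical time gain, together with the exponent bookkeeping that makes all four Lebesgue exponents fit simultaneously, to be the main obstacle. Granting it, $\Psi$ satisfies $\|\Psi(\psi)\|_{Y_I}\lesssim \sum_k\|\psi_{0k}\|_{L^2}+|I|^{(\alpha-\gamma)/\alpha}\|\psi\|_{Y_I}^3$ together with the analogous Lipschitz difference estimate, so on a sufficiently short $I$ it is a contraction on a closed ball, giving a unique local solution whose lifespan depends only on $\sum_k\|\psi_{0k}\|_{L^2}$.

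To pass from local to global I would prove conservation of each individual mass. Pairing the $k$-th equation with $\overline{\psi_k}$ and taking imaginary parts, the kinetic term drops out by self-adjointness of $\phi(h(D))$ (real symbol $|\xi|^\alpha$), the Hartree term drops out because $|x|^{-\gamma}\ast|\psi_l|^2$ is real, and the Fock contribution $\iint |x|^{-\gamma}(x-y)\,\psi_l(x)\overline{\psi_k}(x)\,\overline{\psi_l}(y)\psi_k(y)\,dx\,dy$ is real for each $l$ (it is invariant under $x\leftrightarrow y$ using that $|x|^{-\gamma}$ is even), hence has zero imaginary part. Therefore $\tfrac{d}{dt}\|\psi_k\|_{L^2}^2=0$. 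Since the local existence time depends only on the now-conserved masses, iterating the local argument on consecutive intervals of a fixed length (on which the Strichartz constant is uniformly controlled) extends the solution to all of $\mathbb R$; the Strichartz theory simultaneously places $(\psi_1,\dots,\psi_N)$ in every $L^p_{loc}(\mathbb R,L^q)$ with $(p,q)$ admissible and yields continuity into $L^2$. Part (ii) follows by the identical scheme with Proposition~\ref{fst}(ii) replacing (i); the only additional point is that radial data generate radial iterates at each step, since both the propagator $e^{-it(-\Delta)^{\alpha/2}}$ and the Hartree and Fock nonlinearities preserve radial symmetry, so the radial Strichartz estimates remain applicable throughout.
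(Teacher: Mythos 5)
Your proposal is correct and follows essentially the same route as the paper: Duhamel's formula plus a Strichartz contraction in $C_tL^2\cap L^{4\alpha/\gamma}_tL^{4d/(2d-\gamma)}_x$ built on the $\alpha$-fractional admissible pair $(4\alpha/\gamma,\,4d/(2d-\gamma))$, the Hardy--Littlewood--Sobolev and H\"older trilinear bound with the subcritical time gain $T^{(\alpha-\gamma)/\alpha}$ (the paper's $T^{1-\gamma/(2s)}$ with $s=\alpha/2$), globalization by mass conservation, and the radial Strichartz estimates of Proposition~\ref{fst}(ii) for part (ii). Your only additions --- the explicit verification that the Fock term's pairing is real, so each individual mass is conserved, and the remark that the propagator and nonlinearities preserve radial symmetry --- are points the paper asserts or uses implicitly, and they are welcome but do not change the argument.
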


\begin{proof}
We first establish part (ii). 
By Duhamel's formula, we write \eqref{HF}
as 
$$\Phi(\psi_1,..., \psi_N):=\psi_{k}(t) = U(t)\psi_{0k} -i \int_0^tU(t-s) (H\psi_k)(s) ds 
+ i \int_0^t U(t-s) (F_k(\psi_k))(s) ds$$
where Hartree factor  $H= \sum_{l=1}^{N} \left(\frac{1}{|x|^{\gamma}} \ast |\psi_{l}|^2 \right)$ and  Fock term   $F(\psi_k)=\sum_{l=1}^{N} \psi_{l} \left(\frac{\kappa}{|x|^{\gamma}} \ast \{\overline{\psi_l} \psi_k\} \right)$.
Put $ s= \frac{\alpha}{2}$.  We introduce the space
\begin{eqnarray*}
Y(T)  & =  &\{ (\psi_1,..., \psi_N) \in  \left (C\left([0,T], L_{rad}^2(\mathbb R^d) \right)\right)^N :\|\psi_k \|_{L^{\infty}([0, T], L^2)}  \leq 2 \|\psi_{0k}\|_{L^2}, \\
&&  \|\psi_k\|_{L^{\frac{8s}{\gamma}} ([0,T], L^{\frac{4d}{2d-\gamma}}  ) } \lesssim \|\psi_{0k}\|_{L^2}\}
\end{eqnarray*}
and the distance 
$$d(\phi_1, \phi_2)=  \max\left \{ \|f_i - g_i \|_{L^{\frac{8s}{\gamma} }\left( [0, T], L^{\frac{4d}{(2d- \gamma)}}\right)}: i=1,..., N \right\},$$
where $\phi_1= (f_1,..., f_N)$ and $\phi_2= (g_1,...,g_N).$ Then $(Y, d)$ is a complete metric space. Now we show that $\Phi$ takes $Y(T)$ to $Y(T)$ for some $T>0.$ 
We put 
$$ \ q= \frac{8s}{\gamma}, \  r= \frac{4d}{2d- \gamma}.$$ 
Note that $(q,r)$ is $\alpha-$fractional admissible and 
$$ \frac{1}{q'}= \frac{4s- \gamma}{4s} + \frac{1}{q}, \  \frac{1}{r'}= \frac{\gamma}{2d} + \frac{1}{r}.$$
Let $(\bar{q}, \bar{r}) \in \{ (q,r), (\infty, 2) \}.$ By part (ii) of Proposition \ref{fst}  and  H\"older's inequality, we have 
\begin{eqnarray*}
I &:= & \|\Phi(\psi_1,...,\psi_N)\|_{L_{t,x}^{\bar{q}, \bar{r}}} \\
&  \lesssim &  \|\psi_{0k}\|_{L^2} + \|H\psi_k \|_{L_{t,x}^{q',r'}} + \|F\psi_k \|_{L_{t,x}^{q',r'}} \\
&  \lesssim &  \|\psi_{0k}\|_{L^2} + \sum_{l=1}^{N}\left\| \left( |\cdot|^{-\gamma}\ast |\psi_l|^2 \right) \psi_k \right \|_{L_{t,x}^{q',r'}} + \left\| \left( |\cdot|^{-\gamma}\ast  (\bar{\psi_l} \psi_k) \right) \psi_l \right \|_{L_{t,x}^{q',r'}}\\
& \lesssim &  \|\psi_{0k}\|_{L^2} + \sum_{l=1}^{N}  \| |\cdot|^{-\gamma} \ast |\psi_l|^2\|_{L_{t,x}^{\frac{4s}{4s-\gamma}, \frac{2d}{\gamma}}}  
\|\psi_k\|_{L^{q,r}_{t,x}} +  \| |\cdot|^{-\gamma} \ast (\bar{\psi_l} \psi_k)\|_{L_{t,x}^{\frac{4s}{4s-\gamma}, \frac{2d}{\gamma}}} 
\|\psi_l\|_{L^{q,r}_{t,x}}.
\end{eqnarray*}
Since $0<\gamma< \min \{\alpha, d \}$, by the Hardy-Littlewood-Sobolev lemma,  we  have 
\begin{eqnarray*}
\| |\cdot|^{-\gamma} \ast (\bar{\psi_{l}} \psi_k)\|_{L_{t,x}^{\frac{4s}{4s-\gamma}, \frac{2d}{\gamma}}}  & = &  \left\|  \| |\cdot|^{-\gamma}\ast (\bar{\psi_{l}} \psi_k)\|_{L_x^{\frac{2d}{\gamma}}} \right\|_{L_t^{\frac{4s}{4s- \gamma}}}\\
& \lesssim &   \left \| \||\bar{\psi_{l}} \psi_k\|_{L_x^{\frac{2d}{2d- \gamma}}} \right\|_{L_t^{\frac{4s}{4s- \gamma}}} \\
& \lesssim & \|\psi_{l}\|_{L_{t,x}^{\frac{8s}{4s- \gamma},r}} \|\psi_{k}\|_{L_{t,x}^{\frac{8s}{4s- \gamma},r}}\\
& \lesssim & T^{1- \frac{\gamma}{2s}} \|\psi_l\|_{L_{t,x}^{q,r}} \|\psi_k\|_{L_{t,x}^{q,r}}.
\end{eqnarray*}
Observe that in the last inequality we use the  inclusion relation for the finite measure space $L^p([0, T])$. 
Thus, we  have 
$$
\|\Phi(\psi_1,...,\psi_N)\|_{L_{t,x}^{\bar{q}, \bar{r}}}   \lesssim   \|\psi_{0k}\|_{L^2}+T^{1- \frac{\gamma}{2s}}  \sum_{l=1}^{N}  \|\psi_l\|^2_{L_{t,x}^{q,r}} \|\psi_k\|_{L_{t,x}^{q,r}}.$$
This shows that $\Phi$ maps $Y(T)$ to $Y(T).$  
Next, we show $\Phi$
is a  contraction. To this end, we  notice  the following identity: for fixed $j\in \{1,...,N\}$ and $K(x)=|x|^{-\gamma},$ we have 
\begin{multline}\label{di}
\sum_{i=1}^{N}(K\ast |u_i|^2) u_j - (K\ast |v_i|^2) v_j = \sum_{i=1}^{N} (K\ast |u_i|^{2})(u_j-v_j) + (K \ast (|u_i|^{2}- |v_i|^{2}))v_j
\end{multline}
and
\begin{multline}\label{di2}
\sum_{i=1}^{N}(K\ast  (\bar{u_i}u_j)) u_i - (K\ast ( \bar{v_i}v_j)) v_i = \sum_{i=1}^{N} (K\ast (\bar{u_i} u_j))(u_i-v_i) + (K \ast  (\bar{u_i}u_j-  (\bar{v_i} v_j) ))v_i.
\end{multline}
It follows that 
\begin{eqnarray}\label{mi}
 \|(K\ast (\bar{u_i} u_j))(u_i-v_i)\|_{L_{t,x}^{q',r'}} \lesssim  T^{1-\frac{\gamma}{2s}} \|u_i\|_{L_{t,x}^{q,r}} \|u_j\|_{L_{t,x}^{q,r}} \|u_i-v_i\|_{L_{t,x}^{q,r}}.
\end{eqnarray}
Put $\delta = \frac{8s}{4s-\gamma}.$  Notice that $\frac{1}{q'}= \frac{1}{2}+ \frac{1}{\delta}, \frac{1}{2}= \frac{1}{\delta} + \frac{1}{q},$ and thus by H\"older'sinequality, we obtain
\begin{eqnarray}\label{mi1}
 \|(K \ast (|u_i|^{2}- |v_i|^{2}))v_j\|_{L_{t,x}^{q',r'}} & \lesssim & \| K\ast \left( |u_i|^2-|v_i|^2\right)\|_{L_{t,x}^{2, \frac{2d}{\gamma}}} \|v_j\|_{L_{t,x}^{\delta, r}} \nonumber \\
 & \lesssim & ( \|K \ast (u_i (\bar{u_i}- \bar{v_i})) \|_{L_{t,x}^{2, \frac{2d}{\gamma}}} \nonumber
 \\
 && + \|K \ast \bar{v_i}(u_i-v_i)) \|_{L_{t,x}^{2, \frac{2d}{\gamma}}} ) \|v_j\|_{L_{t,x}^{\delta,r}} \nonumber\\
 & \lesssim & \left( \|u_i\|_{L_{t,x}^{\delta,r}} \|v_j\|_ {L_{t,x}^{\delta,r}} +\|v_i\|_ {L_{t,x}^{\delta,r}}\|v_j\|_ {L_{t,x}^{\delta,r}}\right) \|u_i-v_i\|_{L_{t,x}^{q,r}}\nonumber\\
 & \lesssim & T^{1-\frac{\gamma}{2s}}  \left( \|u_i\|_{L_{t,x}^{q,r}} \|v_j\|_ {L_{t,x}^{q,r}} +\|v_i\|_ {L_{t,x}^{q,r}}\|v_j\|_ {L_{t,x}^{q,r}}\right)  \|u_i-v_i\|_{L_{t,x}^{q,r}}.
\end{eqnarray}
Similarly,
\begin{eqnarray*}
\|(K \ast  (\bar{u_i}u_j-  (\bar{v_i} v_j) ))v_i\|_{L_{t,x}^{q',r'}} & \lesssim &  T^{1-\frac{\gamma}{2s}}   \|u_i\|_{L_{t,x}^{q,r}} \|v_i\|_ {L_{t,x}^{q,r}} \|u_j-v_j\|_{L_{t,x}^{q,r}} \\
&&+ T^{1-\frac{\gamma}{2s}}\|v_i\|_ {L_{t,x}^{q,r}}\|v_j\|_ {L_{t,x}^{q,r}} \|u_j-v_j\|_{L_{t,x}^{q,r}}.
\end{eqnarray*}
Let $u= (u_1,...,u_N)$ and $v=(v_1,...., v_N).$  Now in view of \eqref{di}, \eqref{mi}, and \eqref{mi1}, we have 
\begin{eqnarray*}
 d(\Phi(u), \Phi(v))& \lesssim & \sum_{i=1}^N  \|(K\ast |u_i|^{2})(u_j-v_j)\|_{L_{t,x}^{q',r'}} + \|(K \ast (|u_i|^{2}- |v_i|^{2}))v_j\|_{L_{t,x}^{q',r'}}\\
 && + \|(K\ast (\bar{u_i} u_j))(u_i-v_i)\|_{L_{t,x}^{q',r'}} + \|(K \ast  (\bar{u_i}u_j-  (\bar{v_i} v_j) ))v_i\|_{L_{t,x}^{q',r'}}.\\
& \lesssim &   T^{1-\frac{\gamma}{2s}} \sum_{i=1}^N  [\|u_i\|^2_{L_{t,x}^{q,r}}  +\|u_i\|_{L_{t,x}^{q,r}} \|v_j\|_ {L_{t,x}^{q,r}} +\|v_i\|^2_ {L_{t,x}^{q,r}} \|v_j\|^2_ {L_{t,x}^{q,r}}\\
&&  +  \|v_i\|_ {L_{t,x}^{q,r}}\|v_j\|_ {L_{t,x}^{q,r}} + \|v_i\|_ {L_{t,x}^{q,r}}\|v_j\|_ {L_{t,x}^{q,r}} ]d(u,v).
\end{eqnarray*}
Thus $\Phi$ is a contraction from $Y(T)$ to $Y(T)$  provided that $T$ is sufficiently small. Then there exists a unique $ (\psi_1,..., \psi_N) \in Y(T)$ solving \eqref{HF}. The global  existence of the solution \eqref{HF} follows from the conservation of the $L^2-$norm of $\psi_k.$ The last property of the proposition then follows from the Strichartz estimates applied with an arbitrary $\alpha-$fractional admissible pair on the left hand side and the same pairs as above on the right hand side. This completes the proof of  part (ii). 

 The proof of part (i) follows by setting  $\alpha=2$ and using Proposition \ref{fst} part (i).
\end{proof}

\begin{proposition}\label{miF2}  Let $\phi\circ h (\xi)= |\xi|^{\alpha}$ for $ \xi \in \mathbb R^d, \alpha>0$  and  $0 < \gamma < \min\{\alpha, d\}.$
\begin{enumerate}
\item[(i)] \label{wrc} Let  $d\in \mathbb N$ and  $\alpha =2.$  If $ (\psi_{01}, ...., \psi_{0N}) \in \left(L^2(\mathbb R^d)\right)^N$ then \eqref{RHF} has a unique global solution 
$$ (\psi_1,...,\psi_N)   \in  \left(C(\mathbb R, L^{2}(\mathbb R^d))\cap L^{4\alpha/\gamma}_{loc}(\mathbb R, L^{4d/(2d-\gamma)} (\mathbb R^d))\right)^N.$$ 
In addition, its $L^{2}-$norm is conserved, 
$$\|\psi_k(t)\|_{L^{2}}=\|\psi_{k0}\|_{L^{2}}, \   \forall t \in \mathbb R, k =1,2,...,N$$
and for all $\alpha-$ fractional admissible pairs  $(p,q),$ and $  (\psi_1,..., \psi_N) \in  \left( L_{loc}^{p}(\mathbb R, L^{q}(\mathbb R^d)) \right)^{N}.$

\item[(ii)] \label{rc}  Let  $d\ge 2$ and  $\frac{2d}{2d-1} < \alpha < 2.$  If $ (\psi_{01}, ...., \psi_{0N}) \in \left(L_{rad}^2(\mathbb R^d)\right)^N$ then \eqref{RHF} has a unique global solution 
$$ (\psi_1,...,\psi_N)   \in  \left(C(\mathbb R, L_{rad}^{2}(\mathbb R^d))\cap L^{4\alpha/\gamma}_{loc}(\mathbb R, L^{4d/(2d-\gamma)} (\mathbb R^d))\right)^N.$$ 
In addition, its $L^{2}-$norm is conserved, 
$$\|\psi_k(t)\|_{L^{2}}=\|\psi_{k0}\|_{L^{2}}, \   \forall t \in \mathbb R, k =1,2,...,N$$
and for all $\alpha-$ fractional admissible pairs  $(p,q),$ and   $(\psi_1,..., \psi_N) \in  \left( L_{loc}^{p}(\mathbb R, L^{q}(\mathbb R^d)) \right)^{N}.$
\end{enumerate}
\end{proposition}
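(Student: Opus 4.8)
The plan is to mirror the proof of Proposition~\ref{miF} essentially verbatim, simply discarding the Fock term, since \eqref{RHF} is obtained from \eqref{HF} by deleting $F(\psi_k)$ and keeping only the Hartree factor. I would begin with part (ii) (the case $\tfrac{2d}{2d-1}<\alpha<2$, $d\ge 2$), recast \eqref{RHF} through Duhamel's formula as the fixed-point problem
\[
\Phi(\psi_1,\dots,\psi_N):=\psi_k(t)=U(t)\psi_{0k}-i\int_0^t U(t-s)(H\psi_k)(s)\,ds,
\]
with the single Hartree factor $H=\sum_{l=1}^N\big(|x|^{-\gamma}\ast|\psi_l|^2\big)$, and work on the very same complete metric space $(Y(T),d)$ built from the $\alpha$-fractional admissible pair $(q,r)=(8s/\gamma,\,4d/(2d-\gamma))$ with $s=\alpha/2$.

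For the self-mapping property I would apply the radial Strichartz estimate of Proposition~\ref{fst}(ii) together with the two H\"older splittings $\tfrac{1}{q'}=\tfrac{4s-\gamma}{4s}+\tfrac1q$ and $\tfrac{1}{r'}=\tfrac{\gamma}{2d}+\tfrac1r$, exactly as in Proposition~\ref{miF}. The Hardy--Littlewood--Sobolev inequality (valid since $0<\gamma<\min\{\alpha,d\}$) then controls $\||x|^{-\gamma}\ast|\psi_l|^2\|_{L^{4s/(4s-\gamma),\,2d/\gamma}_{t,x}}$, and the finite-measure inclusion on $[0,T]$ produces the gain $T^{1-\gamma/(2s)}$. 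This yields
\[
\|\Phi(\psi_1,\dots,\psi_N)\|_{L^{\bar q,\bar r}_{t,x}}\lesssim \|\psi_{0k}\|_{L^2}+T^{1-\gamma/(2s)}\sum_{l=1}^N\|\psi_l\|_{L^{q,r}_{t,x}}^2\,\|\psi_k\|_{L^{q,r}_{t,x}},
\]
so $\Phi$ preserves $Y(T)$ for $T$ small.

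For the contraction estimate the only nonlinear difference from Proposition~\ref{miF} is that the Fock identity \eqref{di2} is absent: I would invoke solely the Hartree identity \eqref{di}, so that $\Phi(u)-\Phi(v)$ is bounded by the two terms $(K\ast|u_i|^2)(u_j-v_j)$ and $(K\ast(|u_i|^2-|v_i|^2))v_j$, estimated precisely as in \eqref{mi} and \eqref{mi1}. This gives $d(\Phi(u),\Phi(v))\lesssim T^{1-\gamma/(2s)}\,C(\|u\|,\|v\|)\,d(u,v)$, whence Banach's fixed point theorem furnishes a unique local solution in $Y(T)$. Global existence then follows from the conservation of each $\|\psi_k(t)\|_{L^2}$, and the final membership in all $\alpha$-fractional admissible spaces $L^p_{loc}(\R,L^q)$ comes from re-running the Strichartz estimate with an arbitrary admissible pair on the left-hand side and the same pairs as above on the right. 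Part (i) is the special case $\alpha=2$, obtained by replacing Proposition~\ref{fst}(ii) with the non-radial endpoint Strichartz estimate of Proposition~\ref{fst}(i).

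Since the argument is a strict simplification of Proposition~\ref{miF}, I expect no genuine obstacle; the only point demanding care is verifying that deleting the Fock term does not disturb $L^2$-conservation. This is immediate, because the Hartree factor $H$ is real-valued (it is a convolution of the real kernel $|x|^{-\gamma}$ with the nonnegative densities $|\psi_l|^2$), so multiplication by $H$ is formally self-adjoint and, together with the self-adjointness of $\phi(h(D))=(-\Delta)^{\alpha/2}$, yields $\tfrac{d}{dt}\|\psi_k(t)\|_{L^2}^2=0$. This mass conservation is exactly what powers the passage from the local to the global solution.
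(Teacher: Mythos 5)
Your proposal is correct and matches the paper's intent exactly: the paper itself proves Proposition~\ref{miF2} only by remarking that the argument is the same as that of Proposition~\ref{miF}, which is precisely the reduction you carry out, dropping the Fock term and keeping the identity \eqref{di} with estimates \eqref{mi} and \eqref{mi1}. Your added verification that deleting the exchange term preserves $L^2$-conservation (since the Hartree factor is real-valued) is a sound and welcome detail the paper leaves implicit.
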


\begin{proof}
Since the proof is similar to that of  Proposition \ref{miF}, we omit its details.
\end{proof}

Let $\Psi= (\psi_1,...,\psi_N): (\mathbb R \times \mathbb R^d)^N\to \mathbb  C$ be a global $L^2-$  solution given by  Proposition \ref{miF}.
Let $T_{+}$ denotes the maximal time of existence:
\[T_{+}(\Psi)=\sup \left\{ T>0: \Psi(t)\mid_{([0, T]\times \mathbb R^d)^N}\in \left( C([0, T], X)\right)^{N} \right\}.\]
Theorem \ref{gidi} tells us that  $T_{+}(\Psi)>0$ if initial data $(\psi_{01},..., \psi_{0N})\in \left(C([0, T], X\cap L^2(\mathbb R^d)) \right)^N.$

\begin{Lemma} \label{sb}  Assume that  $0< T_{+}< \infty.$ Then 
\begin{eqnarray*}
\lim_{t\to T_{+}} \sum_{k=1}^{N} \|\psi_{k}(t)\|_{X}= \infty.
\end{eqnarray*}
\end{Lemma}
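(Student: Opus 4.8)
The plan is to prove this blow-up alternative by contradiction, exploiting the fact that the local existence time supplied by Theorem~\ref{gidi} depends on the Cauchy data only through the norms $\|\psi_{0k}\|_X$. Suppose the conclusion fails. Then $\limsup_{t\to T_+}\sum_{k=1}^N\|\psi_k(t)\|_X$ is finite, so there exist a constant $M<\infty$ and a sequence $t_n\nearrow T_+$ with
\[
\sum_{k=1}^N\|\psi_k(t_n)\|_X\le M\qquad\text{for all }n.
\]
This is meaningful because, by definition of $T_+$, we have $\Psi\in\big(C([0,T_+),X)\big)^N$, so each $\psi_k(t_n)$ is a well-defined element of $X$.

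First I would apply Theorem~\ref{gidi} with the data $(\psi_1(t_n),\dots,\psi_N(t_n))\in X^N$ prescribed at the initial time $t_n$; this is legitimate because both \eqref{HF} and \eqref{RHF} are autonomous (the propagator $U(t)=e^{it\phi(h(D))}$ is a one-parameter group, and the Duhamel formula may be re-based at any time). The theorem yields a local $X$-solution on an interval of length $\widetilde T=\widetilde T(\|\psi_1(t_n)\|_X,\dots,\|\psi_N(t_n)\|_X,d,\gamma)$. The decisive observation is that, inspecting the proof of Theorem~\ref{gidi}, this existence time is governed by a condition of the form $c\,C_{\widetilde T}\,M'^2\le 1/2$, monotone in the norms of the data; since these norms are bounded by $M$, the time $\widetilde T$ can be chosen uniformly in $n$, say $\widetilde T\ge\delta>0$ with $\delta$ depending only on $M$, $d$, and $\gamma$.

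Next I would invoke the uniqueness statement in Theorem~\ref{gidi}: the solution launched from $t_n$ agrees with $\Psi$ on the overlap, and hence extends $\Psi$ to an $X$-valued solution on $[0,t_n+\delta)$. Choosing $n$ so large that $t_n>T_+-\delta$ forces $t_n+\delta>T_+$, contradicting the maximality of $T_+$. The main obstacle is the uniform-in-$n$ lower bound $\widetilde T\ge\delta$ on the local existence time, which must be extracted from the norm-only, monotone dependence of the existence time in Theorem~\ref{gidi}; once this is secured, the contradiction with the definition of $T_+$ is immediate, and the matching of the two solutions on the overlap is handled by the uniqueness already built into the local theory.
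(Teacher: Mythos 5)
Your proposal is correct and follows essentially the same route as the paper: argue by contradiction from a sequence $t_n\to T_{+}$ along which $\sum_{k=1}^{N}\|\psi_k(t_n)\|_{X}\leq M$, use the fact that the local existence time in Theorem~\ref{gidi} depends only on the $X$-norms of the data (hence is bounded below by some $\delta=\delta(M,d,\gamma)>0$ uniformly in $n$), and glue by uniqueness to extend the $X$-valued solution past $T_{+}$, contradicting its maximality. One minor logical slip worth fixing: the negation of $\lim_{t\to T_{+}}\sum_{k}\|\psi_k(t)\|_{X}=\infty$ yields only $\liminf<\infty$, not $\limsup<\infty$, but this already provides the bounded sequence $\{t_n\}$ that your argument actually uses, so nothing else changes.
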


\begin{proof} We proceed by contradiction and assume that there exist $M>0$ and $\{t_n\}_{n=1}^{\infty}$ such that
\begin{eqnarray*}
t_{n}\to T_{+}  \ \text{as} \ n\to \infty \ \ \text{and} \ \  \sum_{k=1}^{N}\|\psi_k(t_n)\|_{X}\leq M.
\end{eqnarray*}  
Recall that  the life span of the local solution in  Theorem \ref{gidi} depends on the norm of the initial data.  Therefore, there is $T=T(M)>0$ such that  for each $n\in \mathbb N,$ the solution   $\Psi(t)= (\psi_1(t),..., \psi_N(t))$  of \eqref{HF} can be established on the time interval $[t_n, t_n+ T(M)].$ By uniqueness, $\psi_k(t)$ coincides  with standard global $L^2-$solution on this interval, which implies 
\begin{eqnarray*}
\psi_k(t)\mid_{[0, T_{+}+\epsilon] \times \mathbb R^d} \in C([0, T_{+} + \epsilon], X)
\end{eqnarray*}
for some $\epsilon\in  (0, T(M))$ and for $k=1,2..., N$  but this is a contradiction.
\end{proof}
Now we shall see that the solution constructed before is global in time.  In fact, in view of Proposition \ref{miF},  to prove Theorem \ref{dgt}, it suffices to prove  that the modulation space norm of $\psi_k$, that is, $\|\psi_{k}\|_{M^{p,q}}$ cannot become unbounded in finite time for all $k=1,...,N.$ To this end, let $T_0>0$ and $\psi_k:[0, T_0]\times \mathbb R^d \to \mathbb C$ be a local solution to  \eqref{HF} such that $$\psi_k(t)\in C\left([0, T],X \cap L^2(\mathbb R^d)\right)$$ for any $T\in (0, T_0)$ and for $k=1,...,N.$
\begin{Lemma} \label{ms}Assume that $0<\gamma < \min\{\alpha, d/2\}.$ Then 
\begin{eqnarray*}
\sup_{t\in [0, T_{0})} \sum_{k=1}^{N} \|\psi_k(t)\|_{X} < \infty.
\end{eqnarray*}
\end{Lemma}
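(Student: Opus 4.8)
The plan is to show that the $X$-norm cannot escape to infinity by deriving an integral inequality which is \emph{linear} in $\|\psi_k\|_X$ and then closing it with Gronwall's lemma, feeding in the a priori space--time bounds already furnished by Proposition~\ref{miF}. Fix $T\in(0,T_0)$. By uniqueness the local solution coincides on $[0,T]$ with the global $L^2$ solution, so $\|\psi_l(t)\|_{L^2}$ is constant and $\psi_l\in L^{4\alpha/\gamma}([0,T],L^{4d/(2d-\gamma)})$ with finite norm for every $l$; these two controlled quantities are the only ones I will allow to carry the ``other two'' field factors of the cubic nonlinearity, so that the remaining factor keeps the (potentially large) $X$-norm.

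First I would start from the Duhamel representation used in the proof of Theorem~\ref{gidi} and apply the $X$-norm. The free propagator is bounded on $X$ with constant $C_T=C(1+T)^{d|1/p-1/2|}$ by Propositions~\ref{des} and~\ref{drw}, so
\[
\|\psi_k(t)\|_X \;\lesssim\; C_T\|\psi_{0k}\|_X + C_T\int_0^t\Big(\|H\psi_k(s)\|_X+\|F(\psi_k)(s)\|_X\Big)\,ds .
\]
The heart of the matter is to estimate the integrand so that exactly one field carries the $X$-norm while the remaining two are absorbed into a time weight. For the Hartree term I would invoke the module property of Proposition~\ref{gap} to peel off $\psi_k$ in the $X$-norm, reducing matters to estimating the potential $|\cdot|^{-\gamma}\ast|\psi_l|^2$ in a norm that both acts boundedly by multiplication on $X$ and is controlled, through the Hardy--Littlewood--Sobolev inequality, by $\|\psi_l(s)\|_{L^{4d/(2d-\gamma)}}^2$; for the weighted case $X=M^{p,1}_s\cap L^2$ the analogous trilinear control is provided by Propositions~\ref{fip}(ii) and~\ref{ds1}. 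Because $\psi_l\in L^{4\alpha/\gamma}_t L^{4d/(2d-\gamma)}_x$, the resulting coefficient $g(s)=\sum_l\|\psi_l(s)\|_{L^{4d/(2d-\gamma)}}^2$ lies in $L^{2\alpha/\gamma}([0,T])\hookrightarrow L^1([0,T])$, where $\gamma<\alpha$ is used. Summing over $k$ gives
\[
\sum_{k}\|\psi_k(t)\|_X \;\lesssim\; C_T\sum_k\|\psi_{0k}\|_X + C_T\int_0^t g(s)\sum_m\|\psi_m(s)\|_X\,ds,
\]
and Gronwall's inequality yields a bound of the form $\sum_k\|\psi_k(t)\|_X\le C_T\big(\sum_k\|\psi_{0k}\|_X\big)\exp\!\big(C_T\int_0^{T}g\big)$, which stays finite as $T\uparrow T_0$ since $T_0<\infty$ forces $C_{T_0}<\infty$ and $g\in L^1([0,T_0])$. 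This is precisely the asserted uniform bound.

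I expect the genuine obstacle to be the nonlinear estimate for the Fock term $F(\psi_k)=\sum_l\psi_l\,(|\cdot|^{-\gamma}\ast\{\overline{\psi_l}\psi_k\})$, where $\psi_k$ sits \emph{inside} the convolution; isolating a single factor in the $X$-norm while placing $\psi_l$ and $\overline{\psi_l}$ in the Strichartz-controlled Lebesgue norms requires a product estimate mixing the modulation-space structure with the Hardy--Littlewood--Sobolev gain, rather than the purely trilinear bound of Proposition~\ref{t1}, which is cubic in $X$ and would only give a cubic Gronwall inequality that can blow up in finite time. Identifying the multiplier norm that simultaneously dominates the potential (by Strichartz and $L^2$ conservation) and acts on $X$, and checking that the relevant exponents are jointly admissible, is exactly where the sharper hypothesis $\gamma<d/2$ (rather than $\gamma<d$ in Proposition~\ref{miF}) should be consumed, through the Hardy--Littlewood--Sobolev constraint on the potential. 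Once this single estimate is secured, the rest is the routine Gronwall bookkeeping sketched above.
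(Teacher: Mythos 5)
Your Gronwall skeleton is exactly the paper's: Duhamel, the $C_T$-bound for the propagator on $X$, the $\mathcal{F}L^1$-module property of Proposition~\ref{gap} to peel off one factor in the $X$-norm, control of the remaining two factors by $L^2$-conservation and Strichartz norms, and a Gronwall closure. But the pivotal estimate is left unproven, and the concrete form you propose for it is false. No Hardy--Littlewood--Sobolev argument bounds $\bigl\| |\cdot|^{-\gamma}\ast|\psi_l|^2\bigr\|_{\mathcal{F}L^1}$ by $\|\psi_l\|_{L^{4d/(2d-\gamma)}}^2$: the HLS computation in Proposition~\ref{t1} controls the $\mathcal{F}L^1$-norm by $\mathcal{F}L^{2d/(d+\gamma)}$-norms (i.e.\ modulation-type norms), and to convert to Lebesgue norms of $\psi_l$ one must use Hausdorff--Young, which only goes one way. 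Since $\widehat{K}(\xi)\asymp|\xi|^{\gamma-d}$ lies in no global $L^p$, one is forced into the paper's decomposition $\widehat{K}=k_1+k_2$ with $k_1=\chi_{\{|\xi|\le1\}}\widehat K\in L^1$ and $k_2=\chi_{\{|\xi|>1\}}\widehat K\in L^q$ for $q>d/(d-\gamma)$. The $k_1$-piece is handled by $\|\widehat{|\psi_l|^2}\|_{L^\infty}\le\|\psi_l\|_{L^2}^2$ (conservation), while the $k_2$-piece requires $\|\widehat{|\psi_l|^2}\|_{L^{q'}}\lesssim\|\psi_l\|_{L^{2q}}^2$ via Hausdorff--Young, which needs $q\le2$; the window $d/(d-\gamma)<q\le2$ is nonempty precisely when $\gamma<d/2$ --- so that hypothesis is consumed by Hausdorff--Young, not by HLS as you guessed. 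Crucially, $2q>2d/(d-\gamma)>4d/(2d-\gamma)$, so your coefficient $g(s)=\sum_l\|\psi_l(s)\|_{L^{4d/(2d-\gamma)}}^2$ lives at too low a Lebesgue exponent to control the high-frequency piece; one must instead invoke the final clause of Proposition~\ref{miF} (membership in \emph{all} $\alpha$-fractional admissible spaces) with an auxiliary pair $(2\beta,2q)$, where $\beta>1$ forces $(q-1)/q<\alpha/d$, compatible with $q>d/(d-\gamma)$ exactly when $\gamma<\alpha$. H\"older in time then yields a term $\bigl(\int_0^t h(\tau)^{\beta'}\,d\tau\bigr)^{1/\beta'}$ rather than your linear $L^1$-coefficient, and the paper closes by raising to the power $\beta'$ before applying Gronwall.

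Your diagnosis of the Fock term as the genuine obstacle is also misplaced: once the splitting of $\widehat K$ is in hand, the same $\mathcal{F}L^1$ estimate applies with $\bar\psi_l\psi_k$ inside the convolution, since $\|k_1\|_{L^1}\|\psi_l\psi_k\|_{L^1}\le\|k_1\|_{L^1}\|\psi_{0l}\|_{L^2}\|\psi_{0k}\|_{L^2}$ by Cauchy--Schwarz and $\|k_2\|_{L^q}\|\bar\psi_l\psi_k\|_{L^q}\lesssim\|k_2\|_{L^q}\|\psi_l\|_{L^{2q}}\|\psi_k\|_{L^{2q}}$ by H\"older, with the module property placing the $X$-norm on the outside factor $\psi_l$; no new bilinear multiplier estimate beyond Proposition~\ref{gap} is needed. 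So the structural insight of your proposal (linearize in $X$, feed Strichartz and conservation into the coefficient) is correct and is the paper's strategy, but the single estimate you flag as ``the genuine obstacle'' is in the wrong place, and the estimate you actually assert for the Hartree potential fails on exponent grounds; the frequency splitting of the kernel, the auxiliary admissible pair, and the resulting $\beta'$-power Gronwall are the missing ingredients.
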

\begin{proof}
 There exists $C=C(d,\gamma)$ such that the Fourier transform of $K(x)= \kappa |x|^{-\gamma}$ is
\begin{eqnarray*}
\widehat{K}(\xi)= \frac{\kappa C}{|\xi|^{d-\gamma}}.
\end{eqnarray*}
We can decompose the  Fourier transform of Hartree potential into Lebesgue spaces: indeed,  we have 
\begin{eqnarray}\label{dc}
\widehat{K}=k_1+k_2 \in L^{p}(\mathbb R^{d})+ L^{q}(\mathbb R^{d}),
\end{eqnarray}
where  $k_{1}:= \chi_{\{|\xi|\leq 1\}}\widehat{K} \in L^{p}(\mathbb R^{d})$ for all $p\in [1, \frac{d}{d-\gamma})$ and $k_{2}:= \chi_{\{|\xi|>1\}} \widehat{K} \in L^{q}(\mathbb R^{d})$ for all $q\in (\frac{d}{d-\gamma}, \infty].$

In view of \eqref{dc} and to use the Hausdorff-Young inequality we let $1< \frac{d}{d-\gamma} <q \leq 2,$ 
and we obtain
\begin{eqnarray*}
I_k & := & \|\psi_k(t)\|_{X} \\
& \lesssim &  C_{T} \left( \|\psi_{0k}\|_{X} + \sum_{l=1}^{N} \int_{0}^{t} \|(K\ast |\psi_{l}(\tau)|^{2}) \psi_{k}(\tau)\|_{X}+\|(K\ast (\bar{\psi_l}\psi_k)) \psi_{l}(\tau)\|_{X}  d\tau \right)\nonumber \\
& \lesssim &  C_{T}  \|\psi_{0k}\|_{X} + C_T \sum_{l=1}^{N} \int_{0}^{t}  \|K\ast |\psi_l(\tau)|^2\|_{\mathcal{F}L^{1}} \|\psi_k(\tau)\|_{X} + \|K\ast (\bar{\psi_l}\psi_k)\|_{\mathcal{F}L^{1}} \|\psi_l(\tau)\|_{X} d\tau \\
& \lesssim & C_{T}   \|\psi_{0k}\|_{X} + C_{T} \sum_{l=1}^{N} \int_{0}^{t} \left( \|k_{1}\|_{L^{1}} \|\psi_{l}(\tau)\|_{L^{2}}^{2}+ \|k_{2}\|_{L^{q}} \|\widehat{|\psi_{l}(\tau)|^{2}}\|_{L^{q'}}
\right) \|\psi_k(\tau)\|_{X} d\tau \nonumber\\
&& +   C_{T} \sum_{l=1}^{N} \int_{0}^{t} \left( \|k_{1}\|_{L^{1}} \|\psi_{l}(\tau)\psi_k(\tau)\|_{L^{1}}+ \|k_{2}\|_{L^{q}} \|\widehat{\bar{\psi_{l}}(\tau) \psi_k(\tau)}\|_{L^{q'}}
\right) \|\psi_l(\tau)\|_{X} d\tau \nonumber\\
& \lesssim & C_{T}\|\psi_{0k}\|_{X} + C_{T} \sum_{l=1}^{N}\int_{0}^{t} \left(  \|k_{1}\|_{L^{1}} \|\psi_{0l}\|_{L^{2}}^{2}+ \|k_{2}\|_{L^{q}} \||\psi_{l}(\tau) |^{2}\|_{L^{q}}\right)  \|\psi_k(\tau)\|_{X} d\tau\nonumber\\
&&  + C_{T} \sum_{l=1}^{N}\int_{0}^{t} \left(  \|k_{1}\|_{L^{1}} \|\psi_{0l}\|_{L^{2}} \|\psi_{0k}\|_{L^{2}}  + \|k_{2}\|_{L^{q}} \|{\bar{\psi_{l}}(\tau) \psi_k(\tau)}\|_{L^{q}}\right)  \|\psi_l(\tau)\|_{X} d\tau\nonumber\\
& \lesssim & C_{T}\|\psi_{0k}\|_{X}+ C_{T}(N) \int_{0}^{t}\|\psi_k(\tau)\|_{X} d\tau + C_{T} \sum_{l=1}^N \int_{0}^{t} \|\psi_l(\tau)\|_{L^{2q}}^{2} \|\psi_k(\tau)\|_{X} d\tau\\
&& + C_{T}  \sum_{l=1}^N\int_{0}^{t}\|\psi_l(\tau)\|_{X} d\tau + C_{T} \sum_{l=1}^N \int_{0}^{t} \|\psi_l(\tau)\|_{L^{2q}} \|\psi_k(\tau)\|_{L^{2q}}\|\psi_l(\tau)\|_{X} d\tau
 \end{eqnarray*}
 where we have used  Proposition \ref{gap},  H\"older's inequality, and  the conservation of the $L^{2}-$norm of $\psi_k$ ($k=1,..., N$) and $C_T$ is defined as in the proof of Theorem \ref{gidi}.
We note that the requirement on $q$ can be fulfilled if and only if $0<\gamma <d/2.$ To apply Proposition \ref{mi}, we let $\beta>1$ and $(2\beta, 2 q)$ is  $\alpha-$fractional admissible, that is, $\frac{\alpha}{2\beta}= d \left(\frac{1}{2}- \frac{1}{2q} \right)$ such that $\frac{1}{\beta}= \frac{d}{\alpha} \left( 1 - \frac{1}{q} \right)<1.$ This is possible provided $\frac{q-1}{q} < \frac{\alpha}{d}:$ this condition is compatible with the requirement $q> \frac{d}{d-\gamma}$ if and only if $\gamma < \alpha.$
 Using  H\"older's inequality for the last integral, we obtain
\begin{eqnarray*}
I_k &  \lesssim  & C_{T}\|\psi_{0k}\|_{X} + C_{T}(N)\int_{0}^{t} \|\psi_k(\tau)\|_{X} d\tau  + C_{T} \sum_{l=1}^N\|\psi_{l}\|_{L^{2\beta}([0, T], L^{2q})}^{2}\|\psi_k\|_{L^{\beta'}([0, T], X)}\\
&&  + C_{T}  \sum_{l=1}^N\int_{0}^{t}\|\psi_l(\tau)\|_{X} d\tau  +   C_{T} \sum_{l=1}^N\|\psi_{l}\|_{L^{2\beta}([0, T], L^{2q})} \|\psi_{k}\|_{L^{2\beta}([0, T], L^{2q})}\|\psi_l\|_{L^{\beta'}([0, T], X)}
\end{eqnarray*}
where $\beta'$ is the H\"older conjugate exponent of $\beta.$
Let
\begin{eqnarray*}
h(t)=\sup_{s\in [0, t]} \sum_{k=1}^{N} \|\psi_k(s)\|_{X}.
\end{eqnarray*}
For a given $T>0,$ $h$ satisfies an estimate of the form,
$$h(t)\lesssim C_{T} \sum_{k=1}^N \|\psi_{0k}\|_{X}+ C_{T}(N)\int_{0}^{t} h(\tau) d\tau + C_{T} C_0(T, N) \left( \int_{0}^{t}h(\tau)^{\beta'} d\tau \right)^{\frac{1}{\beta'}},$$
provided that $0 \leq t \leq T,$ and where we have used the fact that $\beta'$ is finite.
Using  H\"older's inequality we infer that,
$$h(t)\lesssim  C_{T} \sum_{k=1}^N\|\psi_{0k}\|_{X} + C_{1}(T, N) \left(\int_{0}^{t} h(\tau)^{\beta'}d \tau \right)^{\frac{1}{\beta'}}.$$
Raising the above estimate to the power $\beta'$, we find that
$$h(t)^{\beta'} \lesssim  C_{2}(T, N) \left( 1+\int_{0}^{t} h(\tau)^{\beta'} d\tau\right).$$ 
In view of  Gronwall inequality, one may conclude that  $h\in L^{\infty}([0, T]).$ Since $T>0$ is arbitrary, $h\in L^{\infty}_{loc}(\mathbb R).$  This completes  the proof.
\end{proof}

We can now prove Theorem~\ref{dgt}.

\begin{proof}[\textbf{Proof of Theorem \ref{dgt}}]  Taking Theorem \ref{gidi} into account and combining   Lemmas \ref{ms} and  \ref{sb}, the  proof  of Theorem \ref{dgt} part (i) follows.  Similarly, we can produced the proof of Theorem \ref{dgt} part (ii), we shall omit the details.
\end{proof}

\section{Well-posedness for  Hartree-Fock equations with harmonic potential}\label{whfhp}
In this final section we consider the Hatree-Fock  and reduced Hartree-Fock equations with a harmonic potential as given by~\eqref{HFHP} and \eqref{RHFHP}.

\subsection{Schr\"odinger propagator associated to harmonic oscillator}\label{whm} We start by recalling the spectral decomposition of $H=-\Delta + |x|^2$ by  the Hermite expansion. Let $\Phi_{\alpha}(x),  \  \alpha \in \mathbb N^d$ be the normalized Hermite functions which are products of one dimensional Hermite functions. More precisely, 
$ \Phi_\alpha(x) = \Pi_{j=1}^d  h_{\alpha_j}(x_j) $ 
where 
$$ h_k(x) = (\sqrt{\pi}2^k k!)^{-1/2} (-1)^k e^{\frac{1}{2}x^2}  \frac{d^k}{dx^k} e^{-x^2}.$$ The Hermite functions $ \Phi_\alpha $ are eigenfunctions of $H$ with eigenvalues  $(2|\alpha| + d)$  where $|\alpha |= \alpha_{1}+ ...+ \alpha_d.$ Moreover, they form an orthonormal basis for $ L^2(\R^d).$ The spectral decomposition of $ H $ is then written as
$$ H = \sum_{k=0}^\infty (2k+d) P_k\quad \textrm{with}\quad  P_kf(x) = \sum_{|\alpha|=k} \langle f,\Phi_\alpha\rangle \Phi_\alpha$$ 
where $\langle\cdot, \cdot \rangle $ is the inner product in $L^2(\mathbb{R}^d)$. Given a function $m$ defined and bounded  on the set of all natural numbers we can use the spectral theorem to define $m(H).$ The action of $m(H)$ on a function $f$ is given by
\begin{eqnarray*}\label{dhm}
m(H)f= \sum_{\alpha \in \mathbb N^d} m(2|\alpha| +d) \langle f, \Phi_{\alpha} \rangle \Phi_{\alpha} = \sum_{k=0}^\infty m(2k+d)P_kf.
\end{eqnarray*}
This operator  $m(H)$ is bounded on $L^{2}(\mathbb R^d).$ This follows immediately from the Plancherel theorem for the Hermite expansions as  $m$ is bounded.  On the other hand, the mere boundedness of $m$ is not sufficient  to imply  the $L^{p}$ boundedness of $m(H)$ for $p\neq 2$ (see   \cite{thangavelu1993lectures}). We define \textbf{Schr\"odinger propagator associated to harmonic oscillator}
$$m(H)=e^{it(-\Delta +|x|^2)}f= \sum_{k=0}^\infty e^{it (2k+d)}P_kf$$ with $m(n)=e^{itn}$ for $n\in \mathbb N, t\in  \mathbb R.$ The next result proves that $e^{it (-\Delta +|x|^2)}$ is uniformly  bounded on $M^{p,p}(\R^d).$ More specifically, we have.

\begin{theorem}(\cite[Theorem 5]{drt}, cf. \cite{cordero2008metaplectic})\label{mso}
The Schr\"odinger  propagator  associated to harmonic oscillator $e ^{it (-\Delta +|x|^2)}$ is bounded on $M^{p,p}(\mathbb R^d)$ for each $t\in \mathbb R$, and all  $1\leq p < \infty.$ Moreover, we have $$\|e^{it (-\Delta +|x|^2)}f\|_{M^{p,p}}=\|f\|_{M^{p,p}}.$$
\end{theorem}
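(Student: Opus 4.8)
The plan is to exploit the fact that $U(t)=e^{it(-\Delta+|x|^2)}$ is, up to a phase, a \emph{metaplectic operator}, combined with the elementary observation that for $s=0$ and $p=q$ the norm in Definition \ref{ms} is exactly the $L^p(\R^{2d})$ norm of the short-time Fourier transform. Write $H=-\Delta+|x|^2$ and $\pi(z)=M_wT_x$ for $z=(x,w)\in\R^{2d}$. The operator $U(t)$ is the quantization of the classical flow generated by the quadratic symbol of $H$, and this flow is a \emph{linear symplectic} map $\mathcal{A}_t\in\mathrm{Sp}(d,\R)$ of phase space, obtained by integrating Hamilton's equations for the harmonic oscillator. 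The two ingredients I would combine are the metaplectic intertwining of the time-frequency shifts, $U(t)^{-1}\pi(z)U(t)=c(t,z)\,\pi(\mathcal{A}_t^{-1}z)$ with $|c(t,z)|=1$, and the invariance of Lebesgue measure on $\R^{2d}$ under $\mathcal{A}_t$, which holds because every symplectic matrix has determinant one.

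First I would fix the window. Since the Gaussian ground state $\Phi_0$ satisfies $H\Phi_0=d\Phi_0$, it is an eigenfunction of the propagator, $U(t)\Phi_0=e^{itd}\Phi_0$, and likewise $U(t)^{-1}\Phi_0=e^{-itd}\Phi_0$. Taking $g=\Phi_0$ in Definition \ref{ms} (permissible since $\Phi_0\in\mathcal{S}(\R^d)\setminus\{0\}$, and all windows give equivalent norms), and using $V_gf(z)=\langle f,\pi(z)g\rangle$ together with the unitarity of $U(t)$, I would compute
\[
V_g(U(t)f)(z)=\big\langle f,\,U(t)^{-1}\pi(z)U(t)\,U(t)^{-1}g\big\rangle=\omega(t,z)\,V_gf(\mathcal{A}_t^{-1}z),
\]
where $\omega(t,z)$ is unimodular, collecting the intertwining constant $c(t,z)$ and the eigenvalue phase $e^{itd}$. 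Taking absolute values, the phases disappear and one is left with the pointwise identity $|V_g(U(t)f)(z)|=|V_gf(\mathcal{A}_t^{-1}z)|$ for all $z\in\R^{2d}$.

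To finish, I would recall that for the chosen window and for $s=0$, $p=q$, Fubini gives
\[
\|h\|_{M^{p,p}}=\Big(\int_{\R^{2d}}|V_gh(z)|^p\,dz\Big)^{1/p}.
\]
Feeding in the pointwise identity and performing the change of variables $w=\mathcal{A}_t^{-1}z$, whose Jacobian satisfies $|\det\mathcal{A}_t^{-1}|=1$ because $\mathcal{A}_t$ is symplectic, yields
\[
\|U(t)f\|_{M^{p,p}}^{p}=\int_{\R^{2d}}|V_gf(\mathcal{A}_t^{-1}z)|^{p}\,dz=\int_{\R^{2d}}|V_gf(w)|^{p}\,dw=\|f\|_{M^{p,p}}^{p},
\]
which is the claimed isometry. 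Boundedness on $M^{p,p}$ defined with any other window then follows from the equivalence of modulation-space norms; note that the exact equality genuinely requires the Gaussian window, since for a general $g$ the covariance produces the \emph{different} window $U(t)^{-1}g$ on the right-hand side.

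The main obstacle is the rigorous justification of the metaplectic intertwining relation and the identification of $\mathcal{A}_t$: one must verify that $U(t)$ really lies in the metaplectic representation of $\mathrm{Sp}(d,\R)$ and that its associated symplectic matrix is precisely the time-$t$ Hamiltonian flow of the symbol of $H$. This is classical — it can be read off from Mehler's formula for the kernel of $U(t)$ or from the Weyl calculus for quadratic Hamiltonians — but it is where all the phase bookkeeping resides. Once that covariance is secured, the only quantitative input is $\det\mathcal{A}_t=1$, which holds for every symplectic matrix; in particular no orthogonality of the flow is needed, and the argument gives the isometry for the entire range $1\le p<\infty$, uniformly in $t\in\R$.
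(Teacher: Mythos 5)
The paper does not actually prove Theorem \ref{mso}: it is quoted from the literature, namely \cite[Theorem 5]{drt} together with the metaplectic theory of \cite{cordero2008metaplectic}. Your argument is correct and is essentially the proof behind those citations: $e^{it(-\Delta+|x|^2)}$ is metaplectic, the associated symplectic matrix $\mathcal{A}_t$ is the (rotation) flow of the classical harmonic oscillator, the covariance $|V_g(U(t)f)(z)|=|V_{U(t)^{-1}g}f(\mathcal{A}_t^{-1}z)|$ holds, and for $p=q$, $s=0$ the modulation norm is the $L^p(\R^{2d})$ norm of the STFT, so the change of variables with $\det\mathcal{A}_t=1$ closes the argument. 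Two points you handled well deserve emphasis: choosing the window $g=\Phi_0$ (any Hermite function $\Phi_\alpha$ would serve equally, being an eigenfunction of $U(t)$) is exactly what upgrades mere boundedness, $\|U(t)f\|_{M^{p,p},g}=\|f\|_{M^{p,p},U(t)^{-1}g}$, to the exact isometry stated in the theorem, and your observation that only $\det\mathcal{A}_t=1$ (not orthogonality of the flow) is needed explains why the bound is uniform in $t$ and why the same scheme proves $M^{p,p}$-invariance for the whole metaplectic group, whereas mixed $M^{p,q}$ norms with $p\neq q$ are not preserved since a general symplectic change of variables destroys the iterated $L^p_xL^q_w$ structure. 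The only ingredient you defer --- that $U(t)$ lies in the metaplectic representation with $\mathcal{A}_t$ the Hamiltonian flow --- is classical (Mehler's formula or the Weyl calculus of quadratic Hamiltonians, including the degenerate times where the kernel becomes singular), and this is precisely the content supplied by the references the paper cites.
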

\subsection{Proof of Theorem \ref{mtg}}
In this section we give a proof of Theorem~\ref{mtg}. But first, we state the following definition and some preliminary results.

\begin{Definition} A pair $(q,r)$ is admissible if  $2\leq r< \frac{2d}{d-2}$ with $2\leq r \leq \infty$ if $d=1$, and  $2\leq r < \infty$ if $d=2$, whenever 
$$\frac{2}{q} =  d \left( \frac{1}{2} - \frac{1}{r} \right).$$ 
\end{Definition}

\begin{proposition}(\cite[Proposition 2.2]{carles2011nonlinear}) \label{seh}    Let $\phi \in L^2(\mathbb R^d)$ and  $$DF(t,x) :=  U(t)\phi(x) +  \int_0^t U(t-\tau )F(\tau,x) d\tau.$$  Then for any time slab $I$ and  admissible pairs $(p_i, q_i)$, $i=1,2, $ 
there exists  a constant $C=C(|I|, q_1)$ such that for all intervals $I \ni 0, $
$$ \|D(F)\|_{L^{p_1, q_1}_{t,x}}  \leq  C \|\phi \|_{L^2}+   C  \|F\|_{L^{p_2', q_2'}_{t,x}}, \ \forall F \in L^{p_2'} (I, L^{q_2'})$$  where $p_i'$ and $ q_i'$ are H\"older conjugates of $p_i$ and $q_i$
respectively.
\end{proposition}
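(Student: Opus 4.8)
The plan is to reduce Proposition~\ref{seh} to the classical (non-endpoint) $TT^*$ machinery, for which I need exactly two ingredients: the $L^2$ conservation of $U(t)=e^{it(-\Delta+|x|^2)}$ and a local-in-time dispersive estimate. The first is immediate from the Hermite decomposition recalled in Section~\ref{whm}: since $U(t)=\sum_{k\ge 0}e^{it(2k+d)}P_k$ acts as a unimodular multiplier on each spectral block $P_k$, Plancherel for the Hermite expansion gives $\|U(t)f\|_{L^2}=\|f\|_{L^2}$ for every $t\in\R$, so $U(t)$ is unitary on $L^2(\R^d)$ and in particular $U(t)U(s)^*=U(t-s)$.

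For the second ingredient I would invoke the Mehler formula, which furnishes the explicit kernel
$$U(t)f(x)=c_d(\sin 2t)^{-d/2}\int_{\R^d} e^{\frac{i}{2}\left[(|x|^2+|y|^2)\cot 2t-\frac{2\,x\cdot y}{\sin 2t}\right]}f(y)\,dy .$$
Reading off the prefactor yields the dispersive bound $\|U(t)f\|_{L^\infty}\lesssim |\sin 2t|^{-d/2}\|f\|_{L^1}$. The crucial subtlety, absent for the free propagator, is that this kernel is singular at the times $t\in\frac{\pi}{2}\Z$ where $\sin 2t$ vanishes and all dispersion is lost. To circumvent this I would first work on a short interval, say with $|t-s|\le \pi/4$; then $2|t-s|\le\pi/2$, so $\sin 2(t-s)$ vanishes only at $t=s$, and the concavity bound $\sin x\ge \tfrac{2}{\pi}x$ on $[0,\pi/2]$ gives $|\sin 2(t-s)|^{-d/2}\lesssim|t-s|^{-d/2}$. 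Combined with the group law this produces the free-type decay $\|U(t)U(s)^*\|_{L^1\to L^\infty}\lesssim|t-s|^{-d/2}$ on such an interval.

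With the $L^2\to L^2$ bound and the $|t-s|^{-d/2}$ decay in hand, interpolation gives $\|U(t)U(s)^*\|_{L^{r'}\to L^r}\lesssim|t-s|^{-d(1/2-1/r)}$, and the standard $TT^*$ argument, with the one-dimensional Hardy--Littlewood--Sobolev (fractional integration) inequality applied in the time variable, yields the homogeneous estimate $\|U(\cdot)\phi\|_{L^{p_1}_tL^{q_1}_x}\lesssim\|\phi\|_{L^2}$ together with the retarded inhomogeneous estimate for all admissible pairs on the short interval; the mixed-pair version $(p_1,q_1)\neq(p_2,q_2)$ follows by the Christ--Kiselev lemma. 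I would stress that the admissibility condition in the preceding definition requires $r<\tfrac{2d}{d-2}$, so the \emph{endpoint} is excluded: only the non-endpoint Strichartz estimates are needed and the delicate Keel--Tao endpoint argument can be avoided entirely, which is what makes the Hardy--Littlewood--Sobolev step legitimate.

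The remaining point is to pass from a short interval to an arbitrary bounded slab $I$: I would cover $I$ by $\lceil |I|/(\pi/4)\rceil$ consecutive subintervals of length $\le\pi/4$, apply the short-interval estimates on each, and sum, re-initializing $U$ at each partition point via the group property. This patching is precisely what forces the constant $C$ to depend on $|I|$ (the dependence on $q_1$ coming from the interpolation exponents). The main obstacle I anticipate is this last step for the inhomogeneous term: the retarded double time-integral couples different subintervals, so one must split it into diagonal and off-diagonal blocks and control the off-diagonal contributions---either by Christ--Kiselev or by exploiting the group structure to absorb the cross terms---so that the constants accumulated across the singular times $\tfrac{\pi}{2}\Z$ remain finite and depend only on $|I|$.
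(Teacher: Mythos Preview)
The paper does not actually prove Proposition~\ref{seh}; it merely quotes it from \cite[Proposition 2.2]{carles2011nonlinear} and uses it as a black box. Your sketch is correct and follows precisely the standard route (Mehler kernel $\Rightarrow$ local dispersive bound $\Rightarrow$ non-endpoint $TT^*$/Hardy--Littlewood--Sobolev $\Rightarrow$ patching over subintervals of length $\le\pi/4$), which is essentially the argument in the cited reference; the dependence of $C$ on $|I|$ that you flag is exactly the reason the proposition is stated with $C=C(|I|,q_1)$.

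One minor remark on your ``main obstacle'': the off-diagonal blocks in the inhomogeneous patching are in fact benign. If $I=\bigcup_j I_j$ with $|I_j|\le\pi/4$, then on $I_j$ one writes
\[
\int_0^t U(t-\tau)F(\tau)\,d\tau = U(t-t_j)\Bigl(\int_0^{t_j}U(t_j-\tau)F(\tau)\,d\tau\Bigr)+\int_{t_j}^t U(t-\tau)F(\tau)\,d\tau,
\]
and the first parenthesis is a fixed $L^2$ element by the dual homogeneous estimate already established on the earlier subintervals; thus the cross terms are controlled by the \emph{homogeneous} short-interval estimate applied to this $L^2$ datum, and only finitely many constants (depending on $|I|$) accumulate. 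No Christ--Kiselev is needed for this step.
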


\begin{proposition}  \label{miD} Let 
 $ 0<\gamma < \text{min} \{2, d\}, d\in \mathbb N$. Assume that $  (\psi_{01},..., \psi_{0N}) \in  \left(L^{2}(\mathbb R^{d}) \right)^N.$ Then
\begin{enumerate}
\item[(i)]  There exists  a unique global solution  of \eqref{HFHP} such that
$$ (\psi_1,...,\psi_N)   \in  \left(C([0, \infty), L^{2}(\mathbb R^d))\cap L^{4\alpha/\gamma}_{loc}([0, \infty), L^{4d/(2d-\gamma)} (\mathbb R^d))\right)^N.$$ 
In addition, its $L^{2}-$norm is conserved, 
$$\|\psi_k(t)\|_{L^{2}}=\|\psi_{k0}\|_{L^{2}}, \   \forall t \in \mathbb R, k =1,2,...,N$$
and for all  admissible pairs  $(p,q),$ and $  (\psi_1,..., \psi_N) \in  \left( L_{loc}^{p}(\mathbb R, L^{q}(\mathbb R^d)) \right)^{N}.$
\item[(ii)]  There exists  a unique global solution  of \eqref{RHFHP} such that 
$$ (\psi_1,...,\psi_N)   \in  \left(C([0, \infty), L^{2}(\mathbb R^d))\cap L^{4\alpha/\gamma}_{loc}([0, \infty), L^{4d/(2d-\gamma)} (\mathbb R^d))\right)^N.$$ 
In addition, its $L^{2}-$norm is conserved, 
$$\|\psi_k(t)\|_{L^{2}}=\|\psi_{k0}\|_{L^{2}}, \   \forall t \in \mathbb R, k =1,2,...,N$$
and for all  admissible pairs  $(p,q),$ and $  (\psi_1,..., \psi_N) \in  \left( L_{loc}^{p}(\mathbb R, L^{q}(\mathbb R^d)) \right)^{N}.$
\end{enumerate}
   \end{proposition}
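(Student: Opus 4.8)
The plan is to reproduce the fixed-point argument of Proposition~\ref{miF}, replacing the free (fractional) propagator by the harmonic propagator $U(t)=e^{it(-\Delta+|x|^2)}$ and the fractional Strichartz estimates by those of Proposition~\ref{seh}. Writing $K(x)=\kappa|x|^{-\gamma}$, Duhamel's formula recasts \eqref{HFHP} as
$$\Phi(\psi_1,\dots,\psi_N):=\psi_k(t)=U(t)\psi_{0k}-i\int_0^t U(t-s)(H\psi_k)(s)\,ds+i\int_0^t U(t-s)(F_k(\psi_k))(s)\,ds,$$
with $H=\sum_{l=1}^N(K\ast|\psi_l|^2)$ and $F(\psi_k)=\sum_{l=1}^N\psi_l(K\ast\{\overline{\psi_l}\psi_k\})$. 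Since the dispersion here is of Schr\"odinger type, the admissible pair dictated by the target space is $(q,r)=(8/\gamma,\,4d/(2d-\gamma))$, which indeed satisfies $\tfrac{2}{q}=d(\tfrac12-\tfrac1r)$. First I would set up the complete metric space
$$Y(T)=\Big\{(\psi_k)_{k=1}^N\in\big(C([0,T],L^2)\big)^N:\ \|\psi_k\|_{L^\infty([0,T],L^2)}\le 2\|\psi_{0k}\|_{L^2},\ \|\psi_k\|_{L^{q}([0,T],L^{r})}\lesssim\|\psi_{0k}\|_{L^2}\Big\}$$
endowed with the distance $d(\phi_1,\phi_2)=\max_i\|f_i-g_i\|_{L^q([0,T],L^r)}$, exactly as in Proposition~\ref{miF}.

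Next I would estimate $\Phi$ on $Y(T)$ by applying Proposition~\ref{seh} with the pairs $(\bar q,\bar r)\in\{(q,r),(\infty,2)\}$ on the left and $(q',r')$ on the right, and by bounding the Hartree and Fock nonlinearities through the Hardy--Littlewood--Sobolev inequality. Since $0<\gamma<\min\{2,d\}$, the convolutions $K\ast|\psi_l|^2$ and $K\ast(\overline{\psi_l}\psi_k)$ map into $L^{2d/\gamma}_x$ with a gain, and H\"older in time (using that $[0,T]$ has finite measure) produces the factor $T^{1-\gamma/2}$; the admissibility exponent is $s=1$, so $\gamma<2$ guarantees this is a \emph{positive} power of $T$. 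The same manipulations applied to the difference identities \eqref{di} and \eqref{di2}, together with estimates of the type \eqref{mi}--\eqref{mi1}, show that both the self-map and the contraction bounds carry the prefactor $T^{1-\gamma/2}$, so that for $T$ small (depending only on $\max_k\|\psi_{0k}\|_{L^2}$) the map $\Phi$ is a contraction on $(Y(T),d)$, yielding a unique local solution.

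The passage to a global solution rests on the conservation of each $\|\psi_k(t)\|_{L^2}$. Testing the $k$-th equation against $\psi_k$ gives $\tfrac{d}{dt}\|\psi_k\|_{L^2}^2=2\,\mathrm{Im}\,\langle H\psi_k,\psi_k\rangle+2\,\mathrm{Im}\,\langle F(\psi_k),\psi_k\rangle$, the self-adjoint operator $-\Delta+|x|^2$ dropping out. I would then check that both remaining inner products are real: $H$ is a real multiplication operator, and writing $g_l=\overline{\psi_l}\psi_k$ the Fock contribution equals $\sum_l\langle K\ast g_l,\,g_l\rangle$, which is real because $K$ is real and even, hence $K\ast(\cdot)$ is self-adjoint. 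Thus $\|\psi_k(t)\|_{L^2}=\|\psi_{0k}\|_{L^2}$ for all $t$. Since the local existence time depends only on these (now conserved) norms, the solution continues by a fixed step to all of $[0,\infty)$. Finally, running Proposition~\ref{seh} once more with an arbitrary admissible pair on the left and $(q',r')$ on the right delivers $(\psi_1,\dots,\psi_N)\in\big(L^p_{loc}([0,\infty),L^q)\big)^N$ for every admissible $(p,q)$. Part~(ii) for \eqref{RHFHP} is handled identically, simply deleting the Fock term.

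The main obstacle I anticipate is that, unlike in the free case, the Strichartz estimates for the harmonic oscillator in Proposition~\ref{seh} hold only on finite time slabs, with a constant $C(|I|,q_1)$ that degenerates as $|I|\to\infty$, reflecting the lack of global dispersion for $e^{it(-\Delta+|x|^2)}$. Consequently one cannot close the contraction on an unbounded interval, and global existence must be produced purely through the iteration-with-conserved-$L^2$-norm mechanism rather than from a global a priori space-time bound; correspondingly the space-time integrability is only $L^{8/\gamma}_{loc}$, not $L^{8/\gamma}$. Controlling the interval-dependence of the Strichartz constants while iterating is the one delicate point; the remaining nonlinear estimates are routine transcriptions of those in Proposition~\ref{miF}.
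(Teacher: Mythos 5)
Your proposal is correct and follows exactly the route the paper intends: the paper's own proof of Proposition~\ref{miD} is the one-line remark that it ``follows from Proposition~\ref{seh} and ideas similar to the proof of Proposition~\ref{miF},'' which is precisely the transcription you carry out (harmonic propagator plus finite-time-slab Strichartz in the $\alpha=2$ admissibility scaling, the same space $Y(T)$ and difference identities, conservation of each $\|\psi_k\|_{L^2}$ via the realness of the Hartree and Fock pairings, then iteration with a uniform time step). Your added observations --- the self-adjointness of $K\ast(\cdot)$ for the exchange term and the fact that the $|I|$-dependent Strichartz constant forces the conclusion to be $L^{8/\gamma}_{loc}$ rather than a global space-time bound --- are accurate fillings-in of details the paper leaves implicit.
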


\begin{proof}  The proof follows from Proposition~\ref{seh}  and using ideas similar to the proof of Proposition~\ref{miF}.
\end{proof}
We can now establish  local well-posedness results for~\eqref{HFHP} and~\eqref{RHFHP}. 

\begin{theorem}[Local well-posedness]\label{gidih}  Let $1\leq p \leq \frac{2d}{d+\gamma}$ and $0<\gamma<d.$  Assume that   $\left(\psi_{01},..., \psi_{0N} \right) \in \left(M^{p,p}(\mathbb R^d)\right)^N$. 
  Then
  \begin{enumerate}
  \item[(i)] \label{gidi1}There exists $T>0$ depending only on $\|\psi_{01}\|_{M^{p,p}},...,\|\psi_{0N}\|_{M^{p,p}},$ $d$ and $\gamma$ such that \eqref{HFHP} has a unique local solution
$$(\psi_1,..., \psi_{N})\in \left( C([0, T], M^{p,p}(\mathbb R^d)) \right)^N.$$ 
\item[(ii)] \label{gidi2} There exists $T>0$ depending only on $\|\psi_{01}\|_{M^{p,p}},...,\|\psi_{0N}\|_{M^{p,p}},$ $d$ and $\gamma$ such that \eqref{RHFHP} has a unique local solution
$$(\psi_1,..., \psi_{N})\in \left( C([0, T], M^{p,p}(\mathbb R^d)) \right)^N.$$
\end{enumerate}   
 \end{theorem}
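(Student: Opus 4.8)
The plan is to prove both parts by a contraction-mapping argument in the spirit of the proof of Theorem~\ref{gidi}, with the free propagator $e^{it\phi(h(D))}$ replaced by the harmonic-oscillator propagator $U(t)=e^{it(-\Delta+|x|^2)}$. Writing $K(x)=\kappa|x|^{-\gamma}$ and applying Duhamel's principle, I would recast \eqref{HFHP} as the fixed-point problem
$$\Psi_k(\psi_1,\dots,\psi_N):=\psi_k(t)=U(t)\psi_{0k}-i\int_0^tU(t-s)(H\psi_k)(s)\,ds+i\int_0^tU(t-s)(F_k(\psi_k))(s)\,ds,$$
for $k=1,\dots,N$, posed on the Banach space $\left(C([0,T],M^{p,p}(\R^d))\right)^N$ with norm $\max_k\sup_{t\in[0,T]}\|\cdot\|_{M^{p,p}}$. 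The structural point is that both the Hartree factor and the Fock term are instances of the trilinear form $H_\gamma$ of Section~\ref{te}: indeed $(K\ast|\psi_l|^2)\psi_k=\kappa\,H_\gamma(\psi_l,\psi_l,\psi_k)$ and $\psi_l(K\ast\{\overline{\psi_l}\psi_k\})=\kappa\,H_\gamma(\psi_k,\psi_l,\psi_l)$, so a single trilinear estimate governs the entire nonlinearity.

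Two ingredients then drive the bound. First, Theorem~\ref{mso} furnishes the isometry $\|U(t)f\|_{M^{p,p}}=\|f\|_{M^{p,p}}$ for every $t\in\R$ and $1\le p<\infty$; this is cleaner than the polynomially growing factors $C_T$ used in Theorem~\ref{gidi}, so no time-dependent prefactor enters. Second, since the hypothesis $1\le p\le\frac{2d}{d+\gamma}$ forces $p\le 2$ (because $\frac{2d}{d+\gamma}\le 2$ whenever $\gamma\ge 0$), Proposition~\ref{t1} applies with $q=p$ and gives $\|H_\gamma(f,g,h)\|_{M^{p,p}}\lesssim\|f\|_{M^{p,p}}\|g\|_{M^{p,p}}\|h\|_{M^{p,p}}$. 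Combining these via Minkowski's integral inequality yields, for each Duhamel term,
$$\left\|\int_0^tU(t-s)(H\psi_k)(s)\,ds\right\|_{M^{p,p}}\lesssim T\sum_{l=1}^N\|\psi_k\|_{M^{p,p}}\|\psi_l\|_{M^{p,p}}^2,$$
and the identical bound for the Fock term. Hence $\|\Psi_k(\bar\psi)\|_{C([0,T],M^{p,p})}\lesssim\|\psi_{0k}\|_{M^{p,p}}+cT\sum_l\|\psi_k\|_{M^{p,p}}\|\psi_l\|_{M^{p,p}}^2$, so choosing the radius $M=2\max_k\|\psi_{0k}\|_{M^{p,p}}$ and then $T$ small enough that $cTM^2\le 1/2$ makes the closed ball $B_{T,M}$ invariant under $\Psi$, exactly as in Theorem~\ref{gidi}.

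For the contraction I would exploit the trilinearity of $H_\gamma$: the difference $\Psi(\bar\psi)-\Psi(\bar\phi)$ expands, through telescoping identities analogous to \eqref{di}--\eqref{di2}, into finitely many terms in which $H_\gamma$ is evaluated on one factor equal to a difference $\psi_j-\phi_j$ and the remaining two factors bounded by $M$; Proposition~\ref{t1} (or Lemma~\ref{cl}(i) for the reduced system) then supplies a Lipschitz constant $\lesssim TM^2$, which is $<1$ after further shrinking $T$. The Banach fixed-point theorem delivers the unique local solution in $B_{T,M}$, and continuity in $t$ follows from the strong continuity of $U(t)$ on $M^{p,p}$ together with dominated convergence in the Duhamel integrals; part~(ii) for \eqref{RHFHP} is the same argument with the Fock term omitted. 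I do not expect a deep obstacle here: the only genuine point to verify, and the reason the theorem is stated on the diagonal space $M^{p,p}$ rather than a general $M^{p,q}$, is the compatibility of the two parameter ranges. Theorem~\ref{mso} provides a propagator bound (in fact an isometry) precisely on $M^{p,p}$, whereas Proposition~\ref{t1} demands $q=p\le\frac{2d}{d+\gamma}$; the stated hypothesis $1\le p\le\frac{2d}{d+\gamma}$ is exactly the intersection of these two constraints, so the proof reduces to careful bookkeeping rather than any new analytic difficulty.
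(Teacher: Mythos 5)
Your proposal is correct and takes essentially the same route as the paper: the paper's own proof of Theorem~\ref{gidih} is exactly the standard contraction-mapping argument combining the $M^{p,p}$-isometry of the propagator from Theorem~\ref{mso} with the trilinear estimate of Proposition~\ref{t1}, which you have carried out in full detail. The points the paper leaves implicit --- writing both the Hartree and Fock terms as instances of $H_{\gamma}$, and noting that the hypothesis $1\leq p\leq \frac{2d}{d+\gamma}\leq 2$ is precisely what makes Proposition~\ref{t1} applicable with $q=p$ --- are handled correctly in your write-up.
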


 \begin{proof}
 The results are established by applying a standard contraction mapping argument and using Theorem \ref{mso} and Proposition \ref{t1}.
 \end{proof}

\begin{proof}[\textbf{Sketch proof of Theorem \ref{mtg}}] The proof is similar to that of Theorem \ref{dgt} using Proposition \ref{seh} and Theorem \ref{gidih}.
\end{proof}

{\textbf{Acknowledgment}:}    D.G. B is very grateful to Professor  Kasso Okoudjou for hosting and  arranging research facilities at the University of Maryland.  D.G. B is  thankful to  SERB Indo-US Postdoctoral Fellowship (2017/142-Divyang G Bhimani) for the financial support. D.G.B is also thankful to DST-INSPIRE and TIFR CAM for the  academic leave.  K. A. O.\ was partially supported by a grant from the Simons Foundation $\# 319197$,  the U. S.\ Army Research Office  grant  W911NF1610008,  the National Science Foundation grant DMS 1814253, and an MLK  visiting professorship.

\bibliographystyle{amsplain}
\bibliography{dg}
\end{document}